\newtheorem{theorem}{Theorem}[section]
\newtheorem{prop}[theorem]{Proposition}
\newtheorem{lemma}[theorem]{Lemma}
\newtheorem{corollary}[theorem]{Corollary}
\theoremstyle{definition}
\newtheorem{definition}[theorem]{Definition}
\newtheorem{example}[theorem]{Example}
\newtheorem{remark}[theorem]{Remark}
\newcommand{\PP}{\mathbb{P}}
\newcommand{\CC}{\mathbb{C}}
\newcommand{\QQ}{\mathbb{Q}}
\newcommand{\RR}{\mathbb{R}}
\newcommand{\BB}{\mathcal{B}}
\newcommand{\II}{\mathcal{I}}
\newcommand{\VV}{\mathcal{V}}
\newcommand{\HH}{\mathcal{H}}
\newcommand{\kk}{\mathbbm{k}}
\newcommand{\ww}{\mathbf{w}}
\newcommand{\xx}{\mathbf{x}}
\newcommand{\uu}{\mathbf{u}}
\newcommand{\vv}{\mathbf{v}}
\newcommand{\pp}{\mathbf{p}}
\newcommand{\rr}{\mathbf{r}}
\renewcommand{\tt}{\mathbf{t}}
\renewcommand{\ss}{\mathbf{s}}
\newcommand{\qq}{\mathbf{q}}
\DeclareMathOperator{\rk}{rank}
\DeclareMathOperator{\sgn}{sgn}
\DeclareMathOperator{\supp}{supp}
\title{The slack realization space of a matroid}
\author{Madeline Brandt and Amy Wiebe}
\begin{document}

\maketitle

\begin{abstract}
We introduce a new model for the realization space of a matroid, which is obtained from a variety defined by a saturated determinantal ideal, called the slack ideal, coming from the vertex-hyperplane incidence matrix of the matroid. This is inspired by a similar model for the slack realization space of a polytope. We show how to use these ideas to certify non-realizability of matroids, and describe an explicit relationship to the standard Grassmann-Pl\"ucker realization space model. We also exhibit a way of detecting projectively unique matroids via their slack ideals by introducing a toric ideal that can be associated to any matroid. 
\end{abstract}

\section{Introduction}

Realization spaces of matroids are well studied objects \cite{BLSWZ, weird_bernd_book, mnev} which encode not only whether or not the matroid is realizable, but also carry additional information about the structure of the matroid. A realization (or representation) of a rank $d+1$ matroid $M$ is a set of vectors in $\kk^{d+1}$ which captures its independence structure. Roughly speaking, a realization space is the set of all such choices of vectors. 
Fundamental questions in the study of realization spaces of matroids include discovering whether or not a given matroid is realizable, determining over which field it is realizable, finding the structure of the set of realizations, and characterizing when realizations exist.
A celebrated theorem of Mn\"ev states that every semialgebraic set defined over the integers is stably equivalent to the realization space of some oriented matroid. That is, realization spaces of matroids can become arbitrarily complicated. In light of this, we aim to connect the combinatorics of the matroid to properties of its realization space. 

We generalize a construction of \cite{slack_paper} in which they describe a model for the realization space of a polytope using the {\em slack matrix} of the polytope. This model gave a new framework for answering questions about the realizability of polytopes. We extend these results to the setting of matroids, creating the beginnings of a dictionary between the combinatorial properties of the matroid and the algebraic description of its realization space. 

 
 In Section~\ref{SEC:bg} we introduce the main objects of study, as well as preliminary results and notation. In Section~\ref{SEC:realsp} we discuss two models for the realization space of a matroid. One of our main theorems, Theorem~\ref{THM:mothervariety}, shows how the two realization space models can be described via a single overarching variety. In Section~\ref{SEC:nonreal} we show how the slack realization model can be used to determine whether a matroid has a realization over a certain field. We also reframe the tools of final polynomials \cite{weird_bernd_book} in terms of slack ideals, and show how they can be used to improve computational efficiency of realizability checking.  In Section~\ref{SEC:toric} we introduce a toric ideal associated to a matroid and study its relationship to the projective uniqueness of the matroid. In Appendix \ref{AP:notation} we include a table of notation used throughout the paper. 
The computations in this paper are done in \texttt{Macaulay2} \cite{M2} with the help of the \texttt{Matroids} package \cite{M2matroids}; the code we used can be found at \href{http://sites.math.washington.edu/~awiebe}{\texttt{http://sites.math.washington.edu/$\sim$awiebe}}.

\section{The slack matrix of a matroid}
\label{SEC:bg}

Much of this section is analogous to \cite[\S2]{slack_paper} to which we refer the reader for further details and excluded proofs. 
We assume the reader has familiarity with the basic definitions from matroid theory, see \cite{Oxley} or \cite{GM}. Throughout the paper, we assume all matroids are simple (having no loops or parallel elements).

Let $\kk$ be a field. Let $M = (E,\BB)$ be a matroid of rank $d+1$ with ground set $E = \{\vv_1,\ldots, \vv_n\}$, where each $\vv_i\in\kk^{d+1}$ and $\BB$ is its set of bases. If $V$ is the matrix with columns $\vv_1,\ldots, \vv_n$, then the independent sets of $M$ are the linearly independent subsets of columns, and we write $M = M[V]$. 

Let $\HH(M)$ denote the set of hyperplanes of $M$, which are the closed subsets (flats) of rank $d$. In $M[V]$, each hyperplane $H\in\HH(M)$ corresponds to a linear subspace of $\kk^{d+1}$, so is determined by some linear equation; that is,
$H = \{\xx\in E: \alpha_H^\top\xx = 0\}$.
For $\HH(M) = \{H_1,\ldots, H_h\}$, let $W$ be the matrix whose columns are the hyperplane defining normals $\alpha_1, \ldots, \alpha_h$, or some multiple, $\lambda_j\alpha_j$ for $\lambda_j\in\kk^*$, thereof.

\begin{definition}
The {\em slack matrix} of the matroid $M = M[V]$ over $\kk$ is the $n\times h$ matrix $S_{M[V]} = V^\top W.$ 
\label{DEFN:slackmatrix}
\end{definition}

We wish to parametrize the set of realizations of a matroid by its slack matrices. So, we must determine the characteristics which define the set of all possible slack matrices of a given matroid. We begin by considering the rank of a slack matrix. 

\begin{lemma} If $S$ is a matrix having the same support as the slack matrix of some rank $d+1$ matroid $M = M[V]$, then $\rk(S)\geq d+1$. (See \cite[Lemma~3.1]{slack_paper}.) \label{LEM:trianglesubmat} \end{lemma}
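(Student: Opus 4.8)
The plan is to exhibit, inside the support pattern of $S_{M[V]}$, a collection of rows and columns whose corresponding submatrix of $S$ is forced to be (generically) of full rank $d+1$, so that $\rk(S) \geq d+1$. The key observation is that a $(d+1)\times(d+1)$ submatrix of a slack matrix is triangular (up to permutation) with nonzero diagonal whenever we can find $d+1$ hyperplanes $H_{j_1},\dots,H_{j_{d+1}}$ and $d+1$ points $\vv_{i_1},\dots,\vv_{i_{d+1}}$ such that $\vv_{i_k}\in H_{j_\ell}$ for $\ell < k$ but $\vv_{i_k}\notin H_{j_k}$; in that situation the $(k,\ell)$ entry $\alpha_{j_\ell}^\top \vv_{i_k}$ vanishes for $\ell<k$ and the diagonal entry $\alpha_{j_k}^\top\vv_{i_k}$ does not, so the submatrix is lower-triangular with nonzero diagonal and hence nonsingular. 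Since the support of $S$ agrees with that of $S_{M[V]}$, the same entries of $S$ are zero (forced) while the diagonal entries are nonzero, so this submatrix of $S$ is itself lower triangular with nonzero diagonal, giving $\rk(S)\geq d+1$.

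Concretely, I would build such a "triangular" configuration by a flag argument in the matroid: choose a maximal chain (flag) of flats $\emptyset = F_0 \subsetneq F_1 \subsetneq \cdots \subsetneq F_{d} \subsetneq F_{d+1} = E$, where $\rk(F_k)=k$, so in particular $F_d$ is a hyperplane. For each $k = 1,\dots,d+1$ pick a point $\vv_{i_k} \in F_k \setminus F_{k-1}$ (possible since the rank jumps), and for each $k$ pick a hyperplane $H_{j_k} \supseteq F_{k-1}$ with $\vv_{i_k}\notin H_{j_k}$ — such a hyperplane exists because $F_{k-1}$ has rank $k-1 \le d$ and $\vv_{i_k}\notin F_{k-1}=\mathrm{cl}(F_{k-1})$, so there is a hyperplane containing $F_{k-1}$ and avoiding $\vv_{i_k}$ (any flat is the intersection of the hyperplanes containing it, so some such hyperplane misses a given point outside the flat). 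Then for $\ell<k$ we have $F_{k-1}\supseteq F_{\ell-1}$... wait, I need $\vv_{i_k}\in H_{j_\ell}$ for $\ell<k$: since $H_{j_\ell}\supseteq F_{\ell-1}$, that only gives containment of $F_{\ell-1}$, not of $\vv_{i_k}$. So instead I should choose $H_{j_k}$ among the hyperplanes containing $F_k\setminus\{\text{something}\}$... Let me restructure: take $H_{j_k}$ to be a hyperplane with $F_{d}\supseteq\cdots$; cleaner is to pick $H_{j_k}\supseteq F_{k-1}$ but also, to get the upper entries zero, choose the point and hyperplane indices so that $\vv_{i_\ell}\in F_\ell\subseteq F_{k-1}\subseteq H_{j_k}$ for $\ell \le k-1$. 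That is, reading the submatrix with rows indexed by $\vv_{i_1},\dots,\vv_{i_{d+1}}$ and columns by $H_{j_1},\dots,H_{j_{d+1}}$, the $(\ell, k)$ entry with $\ell<k$ is $\alpha_{j_k}^\top\vv_{i_\ell}$, and since $\vv_{i_\ell}\in F_\ell \subseteq F_{k-1}\subseteq H_{j_k}$ this is zero; the diagonal entry $\alpha_{j_k}^\top\vv_{i_k}$ is nonzero by construction. So the submatrix is upper triangular with nonzero diagonal, hence the corresponding submatrix of $S$ has full rank $d+1$.

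The main obstacle — and the only genuinely matroid-theoretic input — is justifying the two existence claims: that a maximal flag of flats exists with the rank function increasing by one at each step (standard, since in a matroid of rank $d+1$ any flat of rank $r<d+1$ is properly contained in a flat of rank $r+1$, e.g. $\mathrm{cl}(F\cup\{e\})$ for any $e\notin F$), and that for a flat $F$ of rank $\le d$ and a point $e\notin F$ there is a hyperplane $H\supseteq F$ with $e\notin H$ (this follows from the fact that every flat is the intersection of the hyperplanes containing it, so if every hyperplane containing $F$ also contained $e$ then $e\in F$, a contradiction, provided $F$ itself is not already a hyperplane — and when $F$ has rank exactly $d-1$ we still get a rank-$d$ flat between, and when $k=d+1$ we just need $H_{j_{d+1}}\supseteq F_d$ with $F_d$ itself the hyperplane, namely $H_{j_{d+1}}=F_d$). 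After that, the linear-algebra conclusion that a square matrix triangular with nonzero diagonal is invertible, combined with the hypothesis $\supp(S)=\supp(S_{M[V]})$, immediately yields $\rk(S)\geq d+1$. I would also remark that this argument is the matroid analogue of the polytope statement cited as \cite[Lemma 3.1]{slack_paper}, where the flag of faces plays the role of the flag of flats.
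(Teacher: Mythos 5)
Your argument is correct and is exactly the intended one: the paper omits the proof, citing \cite[Lemma~3.1]{slack_paper}, whose argument is precisely this construction of a $(d+1)\times(d+1)$ triangular submatrix with nonzero diagonal (there via a flag of faces of the polytope, here via your flag of flats), as the label \texttt{LEM:trianglesubmat} suggests. Your two existence claims are both standard matroid facts (for the second, extend a basis of $F_{k-1}$ together with $\vv_{i_k}$ to a basis $B$ of $M$ and take $H_{j_k}=\mathrm{cl}(B\setminus\{\vv_{i_k}\})$), so the proof goes through; only the transient upper-versus-lower triangular labeling should be cleaned up in a final write-up.
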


\begin{corollary} If $M=M[V]$ is a rank $d+1$ matroid then $\rk(S_M) = d+1$. \label{COR:rank} \end{corollary}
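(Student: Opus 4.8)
The plan is to use Lemma \ref{LEM:trianglesubmat} for the lower bound and a direct factorization argument for the upper bound, sandwiching the rank at exactly $d+1$. First I would invoke the lemma: the slack matrix $S_M = S_{M[V]}$ trivially has the same support as itself, so Lemma \ref{LEM:trianglesubmat} immediately gives $\rk(S_M) \geq d+1$. This handles one inequality with no extra work.

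For the reverse inequality $\rk(S_M) \leq d+1$, I would go back to the definition $S_{M[V]} = V^\top W$, where $V$ is the $(d+1)\times n$ matrix of vectors $\vv_1,\dots,\vv_n$ and $W$ is the $(d+1)\times h$ matrix of (scaled) hyperplane normals. Since $V^\top$ is an $n\times(d+1)$ matrix and $W$ is a $(d+1)\times h$ matrix, their product factors through $\kk^{d+1}$, so $\rk(S_{M[V]}) = \rk(V^\top W) \leq \min\{\rk(V^\top),\rk(W)\} \leq d+1$. Combining the two bounds yields $\rk(S_M) = d+1$.

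The only point that needs a word of care is whether $\rk(V^\top) = \rk(V)$ is genuinely $d+1$ rather than smaller: this is where simplicity and the rank hypothesis on $M$ enter. Since $M = M[V]$ has rank $d+1$, the columns of $V$ span $\kk^{d+1}$ (some basis $B\in\BB$ gives $d+1$ independent columns), so $\rk(V) = d+1$ and the factorization bound is meaningful. I expect this to be the only mild obstacle — it is really just bookkeeping to confirm that the ambient dimension $d+1$ is actually achieved — and everything else is immediate from the lemma and the submultiplicativity of rank under matrix products. Thus no serious difficulty is anticipated; the corollary is essentially a one-line consequence of Lemma \ref{LEM:trianglesubmat} together with the defining factorization of the slack matrix.
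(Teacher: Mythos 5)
Your proposal is correct and matches the paper's proof exactly: the upper bound $\rk(S_M)\leq d+1$ comes from the factorization $S_M=V^\top W$ through $\kk^{d+1}$, and the lower bound comes from Lemma~\ref{LEM:trianglesubmat}. The extra check that $\rk(V)=d+1$ is harmless but not needed, since the factorization alone already bounds the rank by the inner dimension $d+1$.
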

\begin{proof} The factored form of $S_M \in \kk^{n\times (d+1)}\times\kk^{(d+1)\times h}$ implies that $\rk(S_M)\leq d+1$. The result then follows from Lemma~\ref{LEM:trianglesubmat}.
\end{proof}

Now, let $M = (E,\BB)$ be an abstract matroid of rank $d+1$. 
Unless otherwise stated, we take $E = [n]=\{1,\ldots, n\}$.
A {\em realization} of $M$ over $\kk$ is a collection of vectors $V = \{\vv_1,\ldots, \vv_n\} \subset \kk^\ell$ such that the independent subsets of $V$ are indexed by the independent sets of the matroid, so $M=M[V]$. 
A matroid with a realization is called {\em realizable} (also {\em representable, linear} or {\em coordinatizable}). 

\begin{lemma} The rows of a slack matrix $S_M$ form a realization of the matroid $M$. \label{LEM:rowrealiz} \end{lemma}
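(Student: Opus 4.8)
The statement asserts that if $S_M$ is the slack matrix of a realizable rank $d+1$ matroid $M = M[V]$, then the rows of $S_M$ (viewed as vectors in $\kk^h$) form a realization of $M$. So I need to show that a subset of row indices of $S_M$ is independent as a set of vectors in $\kk^h$ if and only if the corresponding subset of $E = [n]$ is independent in $M$.

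The key observation is the factorization $S_M = V^\top W$ from Definition~\ref{DEFN:slackmatrix}. Since $V$ is a $(d+1) \times n$ matrix of rank $d+1$ (its columns realize $M$, which has full rank $d+1$), the map $\xx \mapsto V^\top \xx$ from $\kk^{d+1}$ to $\kk^n$ is injective, hence a linear isomorphism onto its image, the row space of $V$. Therefore the columns $\vv_{i_1}, \dots, \vv_{i_k}$ of $V$ are linearly independent in $\kk^{d+1}$ if and only if the vectors $V^\top \vv_{i_1}, \dots, V^\top \vv_{i_k}$ (which are columns of $V^\top V$, i.e. rows/columns of a Gram-type matrix) are linearly independent. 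However, this is not quite the rows of $S_M$; the rows of $S_M$ are $\vv_i^\top W$. So instead I should argue directly: a dependence $\sum_j c_j (\text{row}_{i_j} \text{ of } S_M) = 0$ means $\sum_j c_j \vv_{i_j}^\top W = 0$, i.e. $\left(\sum_j c_j \vv_{i_j}\right)^\top W = 0$, i.e. the vector $\uu := \sum_j c_j \vv_{i_j} \in \kk^{d+1}$ lies in the left kernel of $W$.

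Now the crux is: $W$ has rank $d+1$. Indeed, by Corollary~\ref{COR:rank} we have $\rk(S_M) = d+1$, and since $S_M = V^\top W$ with $V^\top$ of rank $d+1$ (so $d+1 \geq \rk(W) \geq \rk(S_M) = d+1$), we get $\rk(W) = d+1$. Since $W$ is a $(d+1) \times h$ matrix of full row rank $d+1$, its left kernel is trivial. Hence $\uu = 0$, i.e. $\sum_j c_j \vv_{i_j} = 0$. Conversely, any linear dependence among the $\vv_{i_j}$ pulls back to a dependence among the corresponding rows of $S_M = V^\top W$. Thus the rows $i_1, \dots, i_k$ of $S_M$ satisfy exactly the same linear dependencies as the columns $\vv_{i_1}, \dots, \vv_{i_k}$ of $V$, so the independent subsets of the rows of $S_M$ are precisely the independent sets of $M$, proving $M = M[S_M^\top]$ (reading rows as vectors).

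**Main obstacle.** The only real content is establishing $\rk(W) = d+1$, i.e. that $W$ has trivial left kernel — equivalently, that no nonzero linear functional vanishes on all hyperplane normals. This could in principle be argued combinatorially (the hyperplanes of a simple rank $d+1$ matroid span, so their normals span $\kk^{d+1}$), but the slick route is to deduce it from $\rk(S_M) = d+1$ (Corollary~\ref{COR:rank}) together with the factorization, which forces $\rk(W) \geq d+1$. Everything else is the elementary linear algebra that a full-rank linear map preserves and reflects linear (in)dependence. I should also note the harmless fact that $V$ itself has rank $d+1$: this holds because $M$ has rank $d+1$ and $V$ realizes $M$, so some $d+1$ columns are independent.
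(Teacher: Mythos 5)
Your proof is correct and follows essentially the same route as the paper's: both arguments use the factorization $S_M = V^\top W$ to translate a linear dependence $\beta$ among rows of $S_M$ into the statement $(V\beta)^\top W = 0$, and then invoke the full rank of $W$ (deduced from Corollary~\ref{COR:rank} via the factorization) to conclude $V\beta = 0$. Your write-up is slightly more explicit than the paper's about why $\rk(W) = d+1$, but the substance is identical.
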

\begin{proof} It suffices to show that if we label the rows of $S_M$ with $[n]$, the subsets indexing linearly independent rows of $S_M$ are the independent sets of $M$. 
Since $S_M = V^TW$, if a subset $J$ of $E$ is dependent, then there exists a vector $\beta\in\kk^n$ with support indexed by $J$ such that $V\beta = 0$. But now, $\beta^\top S_M = (V\beta)^\top W = 0$, so $J$ also indexes a dependent subset of the rows of $S_M$. 

Conversely, suppose $J$ indexes a dependent subset of the rows of $S_M$. Then for some $\beta\in\kk^n$ with support indexed by $J$, we have 
$0 = \beta^\top S_M = (V\beta)^\top W$. Since $W$ has full rank by Corollary~\ref{COR:rank}, it must be the case that $V\beta = 0$, so that $J$ also indexes a dependent set of $M$. \end{proof}

From now on we assume that realizations come with a fixed labelling of ground set elements and hyperplanes, so that two slack matrices of the same matroid cannot differ by permutations of rows and columns. This also allows us to identify hyperplanes of a realization by vectors or the indices of those vectors. 
Now, we characterize the set of matrices which correspond to slack matrices of a matroid $M$. 

\begin{theorem} Let $M$ be a rank $d+1$ matroid with $n$ elements and hyperplanes $\mathcal{H}(M) = \{H_1, \ldots, H_h\}$. A matrix $S \in \kk^{n \times h}$ is the slack matrix of some realization of~$M$ if and only if both of the following hold:
\begin{enumerate}[label=(\roman{enumi})]
\hspace{0.7 in}
\begin{minipage}{0.3 \textwidth}
\item{$\text{supp}(S) = \text{supp}(S_M)$}
\end{minipage}
\hspace{0.5 in}
\begin{minipage}{0.3 \textwidth}
\item{$\rk(S) = d+1$.}
\end{minipage}
\end{enumerate}
\label{THM:slackconditions}
\end{theorem}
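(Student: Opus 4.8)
The plan is to prove the two implications separately. For the ``only if'' direction, suppose $S = S_{M[V]} = V^\top W$ is the slack matrix of a realization $V = \{\vv_1,\dots,\vv_n\}$ of $M$, so the $j$-th column of $W$ is $\lambda_j\alpha_j$ for some $\lambda_j\in\kk^*$ and some normal $\alpha_j$ to the linear hyperplane $\operatorname{span}\{\vv_i : i\in H_j\}$. Then $S_{ij} = \lambda_j\,\vv_i^\top\alpha_j$ vanishes exactly when $\vv_i$ lies on that hyperplane, i.e.\ exactly when the element $i$ lies on the hyperplane $H_j$ of $M$; since this incidence is purely combinatorial, $\supp(S)$ depends neither on the realization nor on the scalars $\lambda_j$, so $\supp(S) = \supp(S_M)$, giving (i), and (ii) is Corollary~\ref{COR:rank}.

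For the ``if'' direction I would reconstruct a realization from $S$. Because $\rk(S) = d+1$, choose a rank factorization $S = V^\top W$ with $V^\top\in\kk^{n\times(d+1)}$ and $W\in\kk^{(d+1)\times h}$ both of rank $d+1$; the fact that this inner dimension is exactly $d+1$ is what places the resulting vectors in the ambient space $\kk^{d+1}$ of a rank $d+1$ realization. Write $\vv_i$ for the $i$-th column of $V$ and $\ww_j$ for the $j$-th column of $W$, so $S_{ij} = \vv_i^\top\ww_j$. It then suffices to show (a) that $M[V] = M$, so $V$ is a realization of $M$, and (b) that each $\ww_j$ is a nonzero scalar multiple of a normal $\alpha_j$ to the hyperplane of $V$ indexed by $H_j$, so that $S = V^\top W$ is by construction a slack matrix of $V$.

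For (a), I would show that a $(d+1)$-subset $B\subseteq[n]$ is a basis of $M$ if and only if $\{\vv_i : i\in B\}$ is a basis of $\kk^{d+1}$ — equivalently, a basis of $M[V]$ — since matroids of the same rank with the same bases coincide. If $B$ is a basis of $M$, then for each $i\in B$ the closure $\overline{B\setminus\{i\}}$ is a hyperplane $H_{j_i}$ containing $B\setminus\{i\}$ but not $i$, so the $(d+1)\times(d+1)$ submatrix of $S$ on rows $B$ and columns $\{j_i : i\in B\}$ is diagonal with nonzero diagonal (this is the mechanism behind Lemma~\ref{LEM:trianglesubmat}), hence invertible; as it factors as the product of the square matrix with rows $\vv_i^\top$ ($i\in B$) and the square matrix with columns $\ww_{j_i}$ ($i\in B$), both factors are invertible and $\{\vv_i : i\in B\}$ is a basis. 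Conversely, if $B$ is not a basis of $M$ then $\overline{B}$ has rank at most $d$ and hence lies in some hyperplane $H_j$, so $\vv_i^\top\ww_j = S_{ij} = 0$ for all $i\in B$; since $\ww_j\neq 0$ — no column of $S_M$, hence of $S$, is zero, because a hyperplane of $M$ cannot be all of $E$ — the $\vv_i$ ($i\in B$) do not span $\kk^{d+1}$. For (b), fix $j$: every $\vv_i$ with $i\in H_j$ satisfies $\vv_i^\top\ww_j = 0$, and $\operatorname{span}\{\vv_i : i\in H_j\}$ is $d$-dimensional because $H_j$ has rank $d$ in $M = M[V]$; thus the nonzero vector $\ww_j$ spans the $1$-dimensional orthogonal complement of this span, which is spanned by $\alpha_j$, so $\ww_j = \lambda_j\alpha_j$ with $\lambda_j\in\kk^*$ and $S$ is a slack matrix of the realization $V$ of $M$.

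The step I expect to be the main obstacle is (a): identifying $M[V]$ with $M$ \emph{exactly}, rather than merely checking that independent sets of $M$ stay independent for $V$. The ``basis of $M$ $\Rightarrow$ basis of $M[V]$'' half is essentially the triangular-submatrix argument behind Lemma~\ref{LEM:trianglesubmat}; the converse half is where the support hypothesis and the factorization must be combined, and where it is essential that $\rk(S)$ is exactly $d+1$ rather than larger — the same exact-rank input is what makes $\ww_j$ land in a one-dimensional space in step (b), so both conditions in the theorem feed the same piece of the argument.
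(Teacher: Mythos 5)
Your proof is correct, and it shares the paper's starting point for the ``if'' direction -- a rank factorization of $S$ with inner dimension $d+1$ -- but the way you identify the reconstructed matroid with $M$ is genuinely different. The paper argues at the level of hyperplanes: it shows each $H_j \in \HH(M)$ is a hyperplane of the matroid of the rows of the left factor, and then rules out extra hyperplanes by observing that an extra one would force $d$ independent elements to lie on two distinct hyperplanes of the same matroid. You instead match bases directly: for $B$ a basis of $M$ you extract the $(d+1)\times(d+1)$ submatrix of $S$ on rows $B$ and the columns indexed by the hyperplanes $\overline{B\setminus\{i\}}$, observe that the support hypothesis makes it a permuted diagonal matrix with nonzero diagonal, and conclude that both square factors are invertible; for $B$ not a basis you bury $\overline{B}$ in a hyperplane and use that the corresponding column $\ww_j$ of $W$ is nonzero. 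Your route buys two things: it isolates exactly where the support condition and the exact value of the rank enter, and your step (b) -- verifying that each $\ww_j$ is a nonzero scalar multiple of the normal $\alpha_j$, so that $V^\top W$ matches Definition~\ref{DEFN:slackmatrix} verbatim -- makes explicit a point the paper's proof leaves implicit, since the paper only establishes $M = M[A^\top]$ and does not check that $AB$ is literally a slack matrix of that realization. The paper's hyperplane argument is shorter but asks the reader to supply that the $d$ chosen elements of the putative extra hyperplane are also independent in $M$; your version avoids that step entirely.
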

\begin{proof}
Suppose $S$ is the slack matrix of some realization of $M$. Then (i) holds trivially, and (ii) holds by Corollary~\ref{COR:rank}. 

Conversely, suppose $S$ is a matrix satisfying (i) and (ii). By (ii), $S$ has some rank factorization $S=AB$, where $A\in\kk^{n\times(d+1)}$ and $B\in\kk^{(d+1)\times h}$. 
Let $\mathbf{a}_1,\ldots, \mathbf{a}_n \in \kk^{d+1}$ be the rows of $A$ and $\mathbf{b}_1,\ldots, \mathbf{b}_h \in \kk^{d+1}$ be the columns of $B$. Then we claim that the rows of $A$ give a realization of $M$; that is $M = M[A^\top]$.
To see this, we show that the hyperplanes of $M$ are also hyperplanes of $M[A^\top]$, and that $M[A^\top]$ can not contain more hyperplanes.

By (i), for each hyperplane $H_j$ of $M$, there is a column of $S$ with zeros in positions indexed by elements of $H_j$. Since $S= AB$, we have $\mathbf{b}_{j}^\top \mathbf{a}_i = 0$ if and only if $i\in H_j$. Thus 
$$\{\xx\in\kk^{d+1} : \mathbf{b}_j^\top \xx = 0\}\cap \{\text{rows}(A)\} = \{\mathbf{a}_i\}_{i\in H_j}$$
so that $H_j$ is also a hyperplane of the matroid $M[A^\top]$.

Now suppose $M[A^\top]$ has an extra hyperplane $H \not \in \mathcal{H}(M)$. Let $\{i_1, \ldots, i_d\} \subset H$ be any $d$ distinct independent elements of $H$. Since $i_1,\ldots, i_d$ are also elements of matroid $M$, the flat $\overline{\{i_1, \ldots, i_d\}}$ is a hyperplane $H'$ of $M$, and thus also a hyperplane of $M[A^\top]$, but $H' \neq H$. However, this means that $\{i_1, \ldots, i_d\}$ are contained in two distinct hyperplanes of $M[A^\top]$, which is not possible, so we arrive at a contradiction. 
\end{proof}

We now recall two equivalence relations on the set of realizations of a matroid $M$, and illustrate how these equivalences are reflected in slack matrices. For $A\in GL(\kk^{d+1})$, it is easy to check that $V$ and $AV$ define the same matroid. We call these realizations {\em linearly equivalent}.
If $P\in\kk^{n\times n}$ is a permutation matrix which sends $i\mapsto \sigma(i)$, then $V$ and $VP$ define the same matroid up to relabelling the ground set $E=[n]$ with $\sigma(1),\ldots, \sigma(n)$. Thus if $A\in GL(\kk^{d+1})$ and $B$ is a permutation matrix with any element of $\kk^*$ in the $1$'s positions, then $V, AVB$ define the same matroid. We call the realizations $V, AVB$ {\em projectively equivalent}. Call a matroid $M$ {\em projectively unique} (over $\kk$) if all realizations are projectively equivalent.


\begin{lemma} Two realizations of a matroid $M$ are projectively equivalent if and only if their slack matrices are the same up to row and column scaling. 
\label{LEM:pe} \end{lemma}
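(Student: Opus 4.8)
The plan is to prove both implications by tracking how the two pieces of data defining a realization---the vectors $\vv_i$ (columns of $V$) and the hyperplane normals $\alpha_j$ (columns of $W$)---transform under an equivalence. Recall that, under the standing convention that the ground-set labelling is fixed, two realizations $V,V'$ are projectively equivalent exactly when $V' = AVD$ for some $A\in GL(\kk^{d+1})$ and some invertible diagonal $D = \mathrm{diag}(d_1,\dots,d_n)$ (the permutation part of the monomial matrix is forced to be trivial once the labelling is pinned down). Note also that each hyperplane $H_j$ of a realization spans a codimension-one subspace of $\kk^{d+1}$, so its normal is determined up to a nonzero scalar; this is what makes the scalar comparisons below legitimate.

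\emph{($\Rightarrow$)} Assume $V' = AVD$. Fix $H_j\in\HH(M)$ with normal $\alpha_j$ in $M[V]$, so $\alpha_j^\top\vv_i = 0$ exactly when $i\in H_j$. Since $\vv_i' = d_i A\vv_i$, a normal $\alpha_j'$ of $H_j$ in $M[V']$ satisfies $(A^\top\alpha_j')^\top\vv_i = (\alpha_j')^\top A\vv_i = 0$ exactly when $i\in H_j$; thus $A^\top\alpha_j'$ is a normal of $H_j$ in $M[V]$, and by uniqueness of normals $\alpha_j' = \mu_j\,(A^{-1})^\top\alpha_j$ for some $\mu_j\in\kk^*$. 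Running over all $j$ and absorbing the free scalings of the columns of $W,W'$, we get $W' = (A^{-1})^\top W D'$ for an invertible diagonal $D'$. Hence
\[
S_{M[V']} = (V')^\top W' = (AVD)^\top (A^{-1})^\top W D' = D\,V^\top\!\big(A^\top(A^{-1})^\top\big) W D' = D\, S_{M[V]}\, D',
\]
using $A^\top(A^{-1})^\top = (A^{-1}A)^\top = I$, so $S_{M[V']}$ is obtained from $S_{M[V]}$ by scaling rows (by $D$) and columns (by $D'$).

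\emph{($\Leftarrow$)} Assume $S_{M[V']} = D_1\, S_{M[V]}\, D_2$ with $D_1\in\kk^{n\times n}$, $D_2\in\kk^{h\times h}$ invertible diagonal, and write $S_{M[V]} = V^\top W$, $S_{M[V']} = (V')^\top W'$. By Corollary~\ref{COR:rank} both slack matrices have rank $d+1$, and since $M$ has rank $d+1$ the matrices $V,V',W,W'$---hence also $VD_1$ and $WD_2$---all have rank $d+1$. Therefore $(V')^\top W'$ and $(VD_1)^\top(WD_2) = D_1 V^\top W D_2 = S_{M[V']}$ are two rank factorizations of the same matrix. Any two rank factorizations differ by a single element of $GL(\kk^{d+1})$: the two full-column-rank left factors each span the column space of $S_{M[V']}$, so there is $g\in GL(\kk^{d+1})$ with $(V')^\top = (VD_1)^\top g$, i.e.\ $V' = g^\top V D_1$. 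As $g^\top\in GL(\kk^{d+1})$ and $D_1$ is invertible diagonal, $V'$ and $V$ are projectively equivalent.

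The only substantive ingredient is the uniqueness-up-to-$GL(\kk^{d+1})$ of rank factorizations, and the place it could fail is exactly the requirement that all four factors have full rank $d+1$; this is precisely what Corollary~\ref{COR:rank} (via Lemma~\ref{LEM:trianglesubmat}) guarantees, and it is why the matroid rank must equal the size of the factorization. Everything else is bookkeeping: tracking the scalars $d_i$, $\mu_j$ and the free column-scalings, and confirming that the $AVD$ description of projective equivalence is the correct reading under the fixed-labelling convention.
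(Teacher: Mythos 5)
Your proof is correct, and the forward direction is essentially the paper's argument: track how the hyperplane normals transform under $V\mapsto AVD$ (namely $\alpha_j\mapsto\mu_j A^{-\top}\alpha_j$) and conclude $S_{M[V']}=D\,S_{M[V]}\,D'$. The converse differs in one technical step. The paper isolates $U^\top$ by multiplying $S_{M[U]}=D_nS_{M[V]}D_h$ on the right by the explicit right inverse $W^\top(WW^\top)^{-1}$ and observes that the resulting $(d+1)\times(d+1)$ factor is invertible; you instead view $(V')^\top W'$ and $(VD_1)^\top(WD_2)$ as two rank factorizations of the same rank-$(d+1)$ matrix and invoke uniqueness of rank factorization up to $GL(\kk^{d+1})$. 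Both yield $V'=g^\top VD_1$, but your version is arguably the more robust one: over an arbitrary field, full row rank of $W$ does not guarantee that $WW^\top$ is invertible (e.g.\ $W=[1\;\,1]$ over $\FF_2$), whereas the rank-factorization argument only needs the full-rank statements supplied by Corollary~\ref{COR:rank}, which you correctly identify as the load-bearing ingredient.
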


\begin{proof} Suppose we have projectively equivalent representations $U,V$ of $M$. Then $U = AVB$, where $A\in GL(\kk^{d+1})$ and without loss of generality $B$ is an invertible $n\times n$ diagonal matrix (since we have assumed a fixed labelling of our matroid).

If $H=\{\vv_{i_1},\ldots, \vv_{i_k}\}$ be a hyperplane of $M[V]$, 
 then $H' = \{\uu_{i_1},\ldots, \uu_{i_k}\}$ is a hyperplane of $M[U]$. Furthermore, if $\alpha_H\in\kk^{d+1}$ is normal to $H$, then since {${\uu_i = A\vv_i\cdot b_i}$}, $A^{-\top}\alpha_H$ is normal to $H'$, 
 so that a slack matrix of $M[U]$ is
$$
S_{M[U]}  = U^\top \begin{bmatrix} A^{-\top}\alpha_H \end{bmatrix}_{H\in\HH} 
		 = B^\top V^\top A^\top\begin{bmatrix} A^{-\top}\alpha_H \end{bmatrix}_{H\in\HH} 
		 = B^\top V^\top W 
		 = B^\top S_{M[V]}.
$$
Since we can always scale columns of a slack matrix, this completes the proof. 

Conversely, suppose we have realizations $U$ and $V$ of the matroid $M$ such that 
${S_{M[U]} = D_n S_{M[V]} D_h}$ for invertible diagonal matrices $D_n\in\kk^{n\times n},D_h\in\kk^{h\times h}$.
By definition, $S_{M[U]} = U^\top W$ and $S_{M[V]} = V^\top W'$.  Multiplying both sides of the above equation on the right by $W^\top(WW^\top)^{-1}$, we find
$$
U^\top = D_n V^\top W' D_h W^\top(WW^\top)^{-1}.
$$
We see that $W' D_h W^\top(WW^\top)^{-1}$ is a $(d+1) \times (d+1)$ invertible matrix, which makes $U$ and $V$ projectively equivalent, as desired. \end{proof}

By taking $B,D_n$ each to be the $n\times n$ identity matrix in the above proof, we recover the following lemma. 
\begin{lemma} 
\label{LEM:columnscale}
Two realizations of a matroid $M$ are linearly equivalent if and only if their slack matrices are the same up to column scaling.
\end{lemma}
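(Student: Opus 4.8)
The plan is to reuse the proof of Lemma~\ref{LEM:pe} essentially verbatim, specializing the permutation‑with‑scaling matrix $B$ in the forward direction and the diagonal matrix $D_n$ in the converse direction to be the $n\times n$ identity, and then to check that the argument still goes through unchanged.

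For the forward implication, suppose $U$ and $V$ are linearly equivalent, so $U = AV$ for some $A\in GL(\kk^{d+1})$; this is the case $B = I_n$ of the first half of the proof of Lemma~\ref{LEM:pe}. As there, if $H$ is a hyperplane of $M[V]$ with defining normal $\alpha_H$, then the corresponding hyperplane of $M[U]$ has normal $A^{-\top}\alpha_H$, so choosing these normals we get
$$S_{M[U]} = U^\top\left[A^{-\top}\alpha_H\right]_{H\in\HH} = V^\top A^\top\left[A^{-\top}\alpha_H\right]_{H\in\HH} = V^\top W = S_{M[V]}.$$
Since a slack matrix is only determined up to rescaling its columns (each hyperplane normal being defined only up to a nonzero scalar), this shows that the two slack matrices agree up to column scaling.

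For the converse, suppose $S_{M[U]} = S_{M[V]}D_h$ for some invertible diagonal $D_h\in\kk^{h\times h}$; this is the case $D_n = I_n$ of the second half of the proof of Lemma~\ref{LEM:pe}. Writing $S_{M[U]} = U^\top W$ and $S_{M[V]} = V^\top W'$ and multiplying on the right by $W^\top(WW^\top)^{-1}$ (following the proof of Lemma~\ref{LEM:pe}, where $\rk(W) = d+1$ by Corollary~\ref{COR:rank}), the same computation yields $U^\top = V^\top\left(W' D_h W^\top(WW^\top)^{-1}\right)$, and $W'D_hW^\top(WW^\top)^{-1}$ is an invertible $(d+1)\times(d+1)$ matrix. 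Hence $U$ and $V$ are linearly equivalent.

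The content here is entirely inherited from Lemma~\ref{LEM:pe}, so there is no real obstacle; the only point requiring a moment's care is the bookkeeping observation that dropping the row operations ($B^\top$, respectively $D_n$) is exactly what removes the row‑scaling freedom from the conclusion of Lemma~\ref{LEM:pe}, leaving column scaling as the only remaining ambiguity — and this ambiguity cannot be removed, since it reflects the intrinsic scaling freedom in the choice of hyperplane normals.
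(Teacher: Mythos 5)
Your proposal is correct and is exactly the paper's argument: the paper states that Lemma~\ref{LEM:columnscale} is recovered by taking $B$ and $D_n$ to be the identity matrix in the proof of Lemma~\ref{LEM:pe}, which is precisely the specialization you carry out in detail.
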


We now define an analog of the slack matrix which can be constructed for any abstract matroid, even ones which are not realizable, as follows. 

\begin{definition} \label{DEFN:symbslack}
Define the {\em symbolic slack matrix} of matroid $M$ to be the matrix $S_M(\xx)$ with rows indexed by elements $i\in E$, columns indexed by hyperplanes $H_j\in \HH(M)$ and $(i,j)$-entry
$$S_M(\xx)_{ij} = \begin{cases} x_{ij} &\text{ if } i\notin H_j \\ 0 &\text{ if } i\in H_j. \end{cases}$$
 The {\em slack ideal} of $M$ is the saturation of the ideal generated by the $(d+2)$-minors of $S_M(\xx)$, namely
\begin{align*}
I_M :&=\Big\langle (d+2)-\text{minors of }S_M(\xx)\Big\rangle { :} \left(\prod_{i=1}^n\prod_{j:i\not \in H_j} x_{ij}\right)^\infty 
&\subset \kk[\xx]. 
\end{align*}
Suppose there are $t$ variables in $S_M(\xx)$. The {\em slack variety} is the variety $\VV(I_M)\subset \kk^t$. The saturation of $I_M$ by the product of all the variables guarantees that there are no components of $\VV(I_M)$ that live entirely in coordinate hyperplanes. If $\mathbf{s}\in\kk^t$ is a zero of $I_M$, then we identify it with the matrix $S_M(\mathbf{s})$. 
\end{definition}

\begin{example}
\label{EG:four_lines}
Consider the rank 3 matroid $M_4 = M[V]$ for $V$ whose columns are
$\vv_1 = (-2,-2,1)^\top$,
$\vv_2 = (-1,1,1)^\top$,
$\vv_3 = (0,4,1)^\top$,
$\vv_4 = (2,-2,1)^\top$,
$\vv_5 = (1,1,1)^\top$,
$\vv_6 = (0,0,1)^\top$.
Projecting onto the plane $z=1$, this can be visualized as the points of intersection of four lines in the plane, as in Figure \ref{FIG:four_lines}. 
\begin{figure}
\begin{center}
\begin{minipage}{0.25 \textwidth}
\includegraphics[height = 2 in]{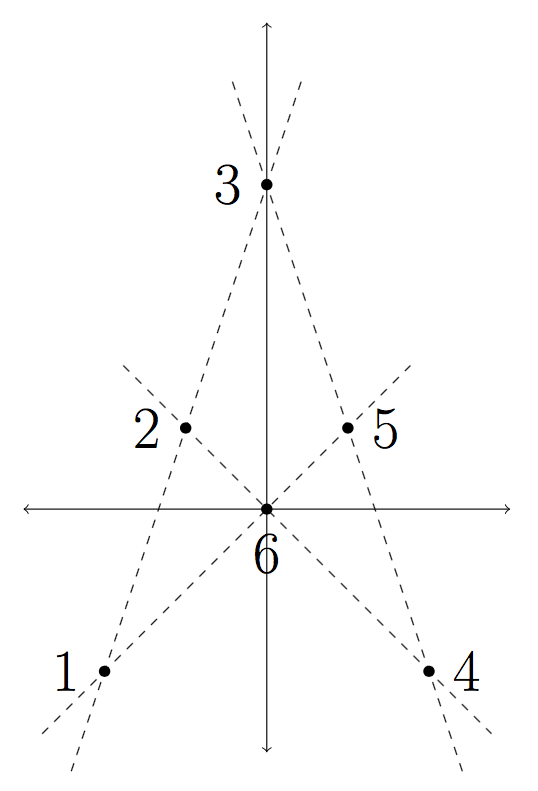}
\end{minipage}
\begin{minipage}{0.6 \textwidth}
\[
\begin{blockarray}{cccccccc}
 &H_1 & H_2& H_3 & H_4 & H_5 & H_6 & H_7 \\
 &123 & 246 & 345 & 156 & 25 & 14 & 36\\
\begin{block}{c[ccccccc]}
1 & 0 & x_{12} & x_{13} & 0 & x_{15} &0 & x_{17} \\
2 & 0 & 0 & x_{23} & x_{24} &0 &x_{26} & x_{27} \\
3 & 0 & x_{32} & 0 & x_{34} &x_{35} & x_{36} &0 \\
4 & x_{41} & 0 & 0 & x_{44} &x_{45} & 0& x_{47} \\
5 & x_{51} & x_{52} & 0 & 0 & 0& x_{56} & x_{57} \\
6 & x_{61} & 0 & x_{63} & 0 &x_{65} & x_{66} & 0\\
\end{block}
\end{blockarray}
 \]
\end{minipage}
\caption{The point-line configuration of Example \ref{EG:four_lines}, and its symbolic slack matrix.}
\label{FIG:four_lines}
\end{center}
\end{figure}
A slack matrix for this realization is then
{
\fontsize{10.5}{12}\selectfont
\begin{align*}
S_{M_4} &=
\begin{blockarray}{ccc}
\begin{block}{@{\;\;}[*{3}{@{\;}c@{\;}}]}
 -2 & -2 & 1 \\
 -1 & 1 & 1 \\
 0 & 4 & 1 \\
 2 & -2 & 1 \\
 1 & 1 & 1 \\
 0 & 0 & 1\\
\end{block}
\end{blockarray}\;\;
\begin{blockarray}{*{7}{@{\,}c@{\,}}}
 H_1 & H_2& H_3 & H_4 & H_5 & H_6 & H_7 \\
 \scriptstyle{123} &  \scriptstyle{246} &  \scriptstyle{345} &  \scriptstyle{156} &  \scriptstyle{25} &  \scriptstyle{14} &  \scriptstyle{36}\\
\begin{block}{[*{7}{@{\,}c@{\,}}]}
-3 & 3 & 6& -3 & 0 & 0 & 4\\
1 & 3 & 2 & 3 & 2 & 4 & 0 \\
-4 & 0 & -8 & 0 & -2 & 8 & 0\\
\end{block}
\end{blockarray}
&=
\begin{blockarray}{c*{7}{@{\,}c@{\,}}}
 &H_1 & H_2& H_3 & H_4 & H_5 & H_6 & H_7 \\
 & \scriptstyle{123} &  \scriptstyle{246} &  \scriptstyle{345} &  \scriptstyle{156} &  \scriptstyle{25} &  \scriptstyle{14} &  \scriptstyle{36}\\
\begin{block}{c@{\;\;}[*{7}{@{\,}c@{\,}}]@{\;}}
1& 0 & -12 & -24 & 0 & -6 & 0 & -8 \\
2& 0 & 0 & -12 & 6 & 0 & 12 & -4 \\
3& 0 & 12 & 0 & 12 & 6 & 24 & 0 \\
4& -12 & 0 & 0 & -12 & -6 & 0 & 8 \\
5& -6 & 6 & 0 & 0 & 0 & 12 & 4 \\
6& -4 & 0 & -8 & 0 & -2 & 8 & 0 \\
\end{block}
\end{blockarray},
\end{align*}
}
where using $\{\vv_{j_1}, \ldots, \vv_{j_d}\}\subset H$ independent, we calculate each $\alpha_H$ as 
\begin{equation}\label{EQ:pluckerdet}
\text{det}
\begin{bmatrix}
\widehat{e_1}& | & \cdots & |\\
\vdots & \vv_{j_1} & \cdots & \vv_{j_d}\\
\widehat{e_{d+1}} & | & \cdots & |\\
\end{bmatrix}.
\end{equation}

The symbolic slack matrix of $M_4$ is in Figure \ref{FIG:four_lines}.
We take the ideal of $4$-minors of this matrix, and saturate with respect to the product of all of the variables to get the slack ideal $I_{M_4}$. This has codimension~12, degree~293 and is generated by the 72~binomial generators in Table \ref{TAB:72things}. In Section \ref{SEC:toric} we will see these correspond to the 72 cycles in the bipartite non-incidence graph of this configuration (Figure \ref{FIG:fourlinesgraph}).

\begin{table}
\begin{center}
\footnotesize
\begin{tabular}{| l | l |}
\hline
$\!\!\deg 2\!\!$ & $x_{36}x_{65}+x_{35}x_{66},\ x_{26}x_{63}-x_{23}x_{66},\ x_{15}x_{63}-x_{13}x_{65},\ x_{56}x_{61}-x_{51}x_{66},\
x_{45}x_{61}-x_{41}x_{65},$\\ 
		& $x_{27}x_{56}+x_{26}x_{57},\ x_{36}x_{52}-x_{32}x_{56},\ x_{17}x_{52}-x_{12}x_{57},\ x_{47}x_{51}-x_{41}x_{57},\ x_{17}x_{45}+x_{15}x_{47},$\\ 
		& $x_{35}x_{44}-x_{34}x_{45},\ x_{27}x_{44}-x_{24}x_{47},\ x_{26}x_{34}-x_{24}x_{36},\ x_{15}x_{32}-x_{12}x_{35},\ x_{17}x_{23}-x_{13}x_{27}$\\
\hline
$\!\!\deg 3\!\!$ & $x_{47}x_{56}x_{65}-x_{45}x_{57}x_{66},\ x_{17}x_{56}x_{65}+x_{15}x_{57}x_{66},\ x_{12}x_{56}x_{65}+x_{15}x_{52}x_{66},\ x_{26}x_{47}x_{65}+x_{27}x_{45}x_{66},$\\ 
		&$x_{26}x_{44}x_{65}+x_{24}x_{45}x_{66},\ x_{17}x_{26}x_{65}-x_{15}x_{27}x_{66},\ x_{17}x_{56}x_{63}+x_{13}x_{57}x_{66},\ x_{12}x_{56}x_{63}+x_{13}x_{52}x_{66},$\\ 
		&$x_{27}x_{45}x_{63}+x_{23}x_{47}x_{65},\ x_{24}x_{45}x_{63}+x_{23}x_{44}x_{65},\ x_{12}x_{36}x_{63}+x_{13}x_{32}x_{66},\ x_{24}x_{35}x_{63}+x_{23}x_{34}x_{65},$\\
		&$x_{23}x_{57}x_{61}+x_{27}x_{51}x_{63},\ x_{15}x_{57}x_{61}+x_{17}x_{51}x_{65},\ x_{13}x_{57}x_{61}+x_{17}x_{51}x_{63},\ x_{35}x_{52}x_{61}+x_{32}x_{51}x_{65},$\\ 
		&$x_{15}x_{52}x_{61}+x_{12}x_{51}x_{65},\ x_{13}x_{52}x_{61}+x_{12}x_{51}x_{63},\ x_{26}x_{47}x_{61}+x_{27}x_{41}x_{66},\ x_{23}x_{47}x_{61}+x_{27}x_{41}x_{63},$\\ 
		&$x_{13}x_{47}x_{61}+x_{17}x_{41}x_{63},\ x_{36}x_{44}x_{61}+x_{34}x_{41}x_{66},\ x_{26}x_{44}x_{61}+x_{24}x_{41}x_{66},\ x_{23}x_{44}x_{61}+x_{24}x_{41}x_{63},$\\
		&$x_{35}x_{47}x_{56}+x_{36}x_{45}x_{57},\ x_{34}x_{47}x_{56}+x_{36}x_{44}x_{57},\ x_{17}x_{35}x_{56}-x_{15}x_{36}x_{57},\ x_{35}x_{47}x_{52}+x_{32}x_{45}x_{57},$\\ 
		&$x_{34}x_{47}x_{52}+x_{32}x_{44}x_{57},\ x_{27}x_{34}x_{52}+x_{24}x_{32}x_{57}, \ x_{13}x_{26}x_{52}+x_{12}x_{23}x_{56},\ x_{36}x_{45}x_{51}+x_{35}x_{41}x_{56},$\\ 
		&$x_{32}x_{45}x_{51}+x_{35}x_{41}x_{52},\ x_{12}x_{45}x_{51}+x_{15}x_{41}x_{52},\ x_{36}x_{44}x_{51}+x_{34}x_{41}x_{56},\ x_{32}x_{44}x_{51}+x_{34}x_{41}x_{52},$\\
		&$x_{26}x_{44}x_{51}+x_{24}x_{41}x_{56},\ x_{27}x_{36}x_{45}-x_{26}x_{35}x_{47},\ x_{17}x_{32}x_{44}+x_{12}x_{34}x_{47},\ x_{15}x_{23}x_{44}+x_{13}x_{24}x_{45},$\\ 
		&$x_{17}x_{26}x_{35}+x_{15}x_{27}x_{36},\ x_{13}x_{26}x_{35}+x_{15}x_{23}x_{36},\ x_{15}x_{27}x_{34}+x_{17}x_{24}x_{35},\ x_{15}x_{23}x_{34}+x_{13}x_{24}x_{35},$\\ 
		&$x_{17}x_{26}x_{32}+x_{12}x_{27}x_{36},\ x_{13}x_{26}x_{32}+x_{12}x_{23}x_{36},\ x_{17}x_{24}x_{32}+x_{12}x_{27}x_{34},\ x_{13}x_{24}x_{32}+x_{12}x_{23}x_{34}$\\
\hline
$\!\!\deg 4\!\!$ & $x_{27}x_{35}x_{52}x_{63}-x_{23}x_{32}x_{57}x_{65},\ x_{17}x_{36}x_{44}x_{63}-x_{13}x_{34}x_{47}x_{66},\ x_{24}x_{35}x_{57}x_{61}-x_{27}x_{34}x_{51}x_{65},$\\
		&$x_{23}x_{34}x_{52}x_{61}-x_{24}x_{32}x_{51}x_{63},\ x_{12}x_{36}x_{47}x_{61}-x_{17}x_{32}x_{41}x_{66},\ x_{13}x_{32}x_{44}x_{61}-x_{12}x_{34}x_{41}x_{63},$\\
		&$x_{15}x_{26}x_{44}x_{52}-x_{12}x_{24}x_{45}x_{56},\ x_{13}x_{26}x_{45}x_{51}-x_{15}x_{23}x_{41}x_{56},\ x_{12}x_{23}x_{44}x_{51}-x_{13}x_{24}x_{41}x_{52}$\\
\hline
\end{tabular}
\end{center}
\caption{The 72 generators of $I_{M_4}$.}
\label{TAB:72things}
\end{table}

 \end{example}
 
\begin{remark} In \cite{slack_paper}, given a set of $n$ vertices $V\subset\kk^d$ defining a $d$-polytope $P=\text{conv}(V)$, they include only the facet defining hyperplanes in the slack matrix. We can also form a matroid associated to this polytope by considering all the hyperplanes; that is, we define the matroid $M = M[V']$ where $V' \subset \kk^{(d+1)\times n} $ is the matrix obtained from $V$ by appending a 1 to each vector. Then the symbolic slack matrix of~$P$ defined in \cite{slack_paper} is the restriction of the symbolic slack matrix of matroid $M$ to the subset of columns corresponding to facet-defining hyperplanes. 
Thus the slack ideal of the polytope is always contained in the slack ideal of the matroid, $I_P\subseteq I_M$. We illustrate with the following example.
\end{remark}
 
\begin{example} We consider the triangular prism $P$ labelled as in Figure \ref{FIG:toblerone}. As a 3-polytope, its facets are given by the hyperplanes $1234,1256,3456,135,246$ and the symbolic slack matrix is in Figure \ref{FIG:toblerone}. Its slack ideal $I_P$ is generated by 3 binomials.

\begin{figure}
\begin{minipage}{2.15in}
\includegraphics[width=1.8in]{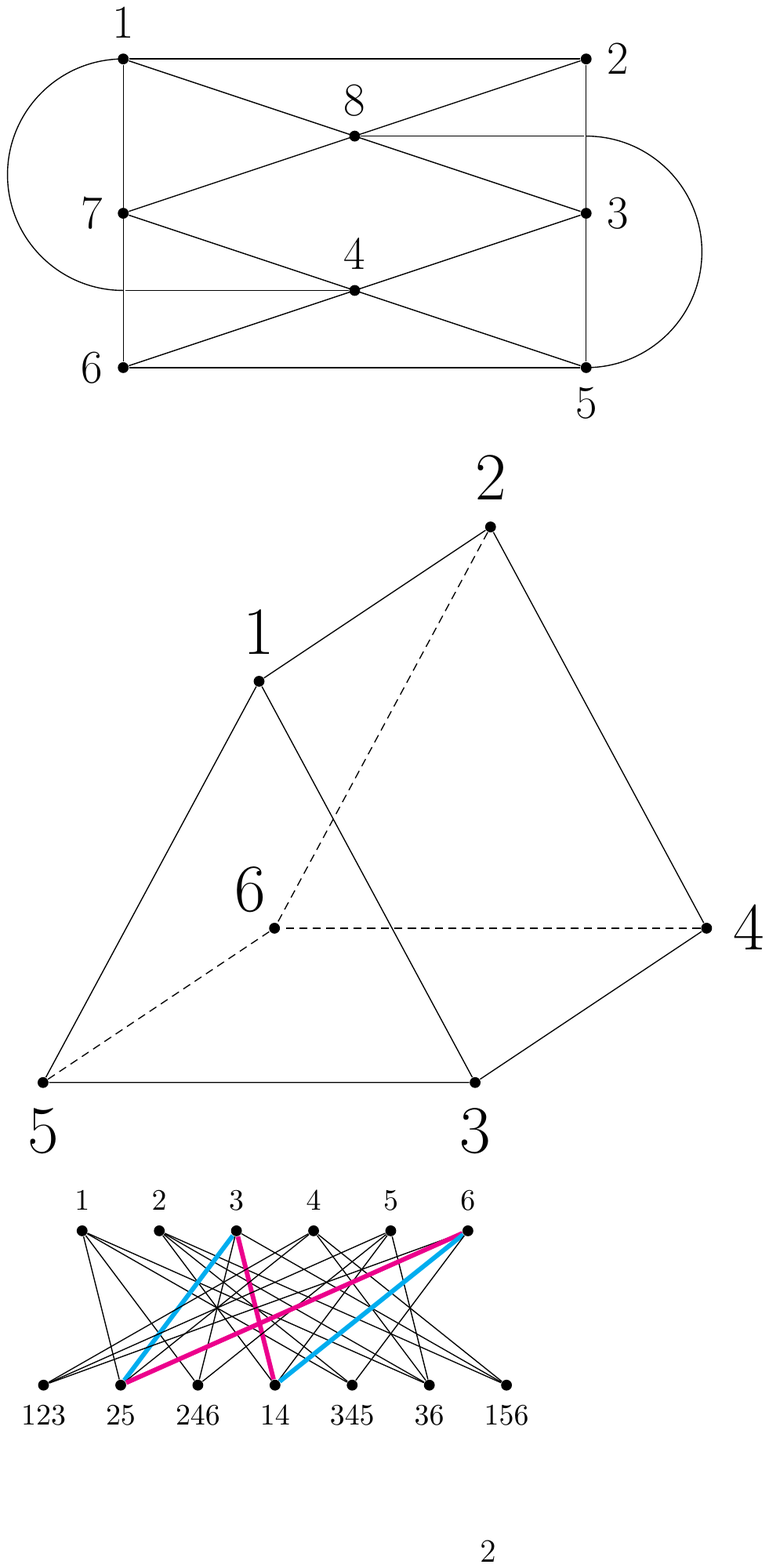}
\end{minipage}\begin{minipage}{3.5in}
\[ S_P(\xx)=
\begin{blockarray}{cccccc}
 	&H_1 	& H_2	& H_3 	& H_4 	& H_5 \\
	&1234 	& 1256 	& 3456 	& 135 	& 246 \\
\begin{block}{c[ccccc]}
1 &0			&0		&x_{13}	&0		&x_{15}	 \\
2 &0			&0		&x_{23}	&x_{24}	&0	 \\ 
3 &	0		&x_{32}	&	0	&	0	&x_{35}	 \\ 
4 &	0		&x_{42}	&	0	&x_{44}	&0	 \\  
5 &	x_{5,1}	&	0	&	0	&	0	&x_{55}	 \\ 
6 &	x_{6,1}	&	0	&	0	&x_{64}	&	0 \\ 
\end{block}
\end{blockarray}
 \]
 \end{minipage}
 \caption{The triangular prism and its slack matrix as a polytope.}
 \label{FIG:toblerone} 
 \end{figure}

Considering $P$ as a rank 4 matroid which has the 3 facets $1234$, $1256$, $3456$ of $P$ as its non-bases, we obtain following symbolic slack matrix.
\[
S_M(\xx)=
\begin{blockarray}{cccccc|cccccc}
 	&H_1 	& H_2	& H_3 	& H_4 	& H_5 	&H_6	&H_7	&H_8	&H_9	&H_{10}	&H_{11}\\
	&1234 	& 1256 	& 3456 	& 135 	& 246 	&136		& 146	&145		&245		&235		&236\\
\begin{block}{c@{\;}[*{5}{@{\;}c@{\;}}|@{\hspace{-3.3pt}}*{6}{@{\;}c@{\;}}]}
1 &0			&0		&x_{13}	&0		&x_{15}	&0		&0		&0		&x_{19}	&x_{1,10}	&x_{1,11}	\\
2 &0			&0		&x_{23}	&x_{24}	&0	 	&x_{26}	&x_{27}	&x_{28}	&0		&0		&0		\\
3 &	0		&x_{32}	&	0	&	0	&x_{35}	&0		&x_{37}	&x_{38}	&x_{39}	&0		&0		\\
4 &	0		&x_{42}	&	0	&x_{44}	&0	 	&x_{46}	&0		&0		&0		&x_{4,10}	&x_{4,11}	\\
5 &	x_{51}	&	0	&	0	&	0	&x_{55}	&x_{56}	&x_{57}	&0		&0		&0		&x_{5,11}	\\
6 &	x_{61}	&	0	&	0	&x_{64}	&	0  	&0		&0		&x_{68}	&x_{69}	&x_{6,10}	&0		\\ 
\end{block}
\end{blockarray}
 \]
Not only is $I_P\subseteq I_M$ but in this case $I_P$ is the elimination ideal given by eliminating the variables in the columns indexed by the additional hyperplanes $H_6,\ldots,H_{11}$.

 \end{example}

\section{Realization space models}
\label{SEC:realsp}

A realization space for a rank $d+1$ matroid $M$ with $n$ elements is, roughly speaking, a space whose points are in correspondence with (equivalence classes of) collections of vectors  $V = \{\vv_1,\ldots, \vv_n\}\subset\kk^{d+1}$ whose matroid $M[V]$ is $M$. In this section we show that the slack variety defined in the \S2 provides such a realization space, and we relate it to another realization space called the Grassmannian of the matroid.

Theorem~\ref{THM:slackconditions} characterizes the slack matrices of realizations of a matroid. The next theorem shows that the slack variety captures exactly these matrices. 

\begin{theorem} Let $M$ be a rank $d+1$ matroid. Then $V$ is a realization of $M$ if and only if $S_{M[V]} = S_M(\ss)$ where $\ss \in \VV(I_M)\cap(\kk^*)^t$.  
\label{THM:realizvariety}
\end{theorem}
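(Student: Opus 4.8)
The plan is to prove both directions by connecting the statement to the already-established Theorem~\ref{THM:slackconditions}, which characterizes slack matrices of realizations by their support and rank. The key bridge is that membership in the slack variety with all coordinates nonzero is \emph{equivalent} to having the correct support together with the rank condition $\rk \le d+1$, and that the latter rank bound combined with the support condition forces rank exactly $d+1$ via Lemma~\ref{LEM:trianglesubmat}.

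For the forward direction, suppose $V$ is a realization of $M$. By Corollary~\ref{COR:rank}, $\rk(S_{M[V]}) = d+1$, so every $(d+2)$-minor of $S_{M[V]}$ vanishes. Writing $S_{M[V]} = S_M(\ss)$ where $\ss$ records the entries of $S_{M[V]}$ in the positions indexed by non-incidences $(i, j)$ with $i \notin H_j$: this is well-defined precisely because $S_{M[V]}$ has the same support as $S_M(\xx)$ (the zero pattern of a slack matrix is exactly given by incidences, as follows from $S_{M[V]} = V^\top W$ and the fact that $\alpha_{H_j}^\top \vv_i = 0 \iff i \in H_j$). So $\ss$ kills all $(d+2)$-minors. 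Moreover each coordinate of $\ss$ is nonzero, again because a nonzero entry of the slack matrix corresponds exactly to $i \notin H_j$. Since $\ss \in (\kk^*)^t$ and $\ss$ lies in the vanishing locus of the $(d+2)$-minors, it survives the saturation by $\prod x_{ij}$, hence $\ss \in \VV(I_M) \cap (\kk^*)^t$.

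For the reverse direction, suppose $\ss \in \VV(I_M) \cap (\kk^*)^t$ and set $S := S_M(\ss)$. Because $\ss \in (\kk^*)^t$, the matrix $S$ has exactly the support of $S_M(\xx)$, which equals $\supp(S_M)$; this is condition~(i) of Theorem~\ref{THM:slackconditions}. Because $\ss \in \VV(I_M)$ and in particular annihilates the $(d+2)$-minors (the saturation only removes components, so $\VV(I_M) \subseteq \VV(\langle (d+2)\text{-minors}\rangle)$), we get $\rk(S) \le d+1$. By Lemma~\ref{LEM:trianglesubmat}, any matrix with the same support as $S_M$ has rank $\ge d+1$, so in fact $\rk(S) = d+1$, giving condition~(ii). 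Theorem~\ref{THM:slackconditions} then yields that $S = S_M(\ss)$ is the slack matrix of some realization of $M$; naming that realization $V$ (so $S_{M[V]} = S_M(\ss)$) finishes the argument.

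The main subtlety — not really an obstacle, but the point requiring care — is the bookkeeping identifying a point $\ss \in \kk^t$ with the matrix $S_M(\ss)$ and verifying that "nonzero coordinates" translates exactly to "support equals $\supp(S_M)$." One must be careful that $t$ counts precisely the non-incident pairs, so that a slack matrix $S_{M[V]}$ genuinely determines a point of $\kk^t$ (no ambiguity) and conversely. The other place to be slightly careful is invoking the saturation correctly: in the forward direction we need that a point with all coordinates nonzero is not discarded by saturating against $\prod x_{ij}$, and in the reverse direction we only need the trivial containment $\VV(I_M) \subseteq \VV(\langle(d+2)\text{-minors}\rangle)$. Everything else is a direct appeal to the cited results.
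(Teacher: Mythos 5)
Your proposal is correct and follows essentially the same route as the paper's proof: both directions reduce to Theorem~\ref{THM:slackconditions} via Corollary~\ref{COR:rank} and Lemma~\ref{LEM:trianglesubmat}. Your explicit remarks about surviving the saturation (forward direction) and the containment $\VV(I_M)\subseteq\VV(\langle (d+2)\text{-minors}\rangle)$ (reverse direction) are correct points that the paper passes over silently.
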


\begin{proof} Let $V$ be a realization of $M$. Then $S_{M[V]} = S_M(\ss)$ for some $\ss \in (\kk^*)^t$ by Theorem~\ref{THM:slackconditions} (i). Furthermore, 
$\rk(S_{M[V]}) = d+1$ by Corollary~\ref{COR:rank}, so that its $(d+2)$-minors vanish and thus $\ss\in\VV(I_M)$, as desired. 

Let $V \in\kk^{(d+1)\times n}$ be such that $S_{M[V]}=S_M(\ss)$ for some $\ss\in\VV(I_M)\cap(\kk^*)^t$. Then $\supp(S_{M[V]}) = \supp(S_M)$ and $\rk(S_{M[V]})\leq d+1$. But now by Lemma~\ref{LEM:trianglesubmat}, $\rk(S_{M[V]})\geq d+1$, making $V$ a realization of $M$ by Theorem~\ref{THM:slackconditions}.
\end{proof}

Since we know that the set of realizations of a matroid is closed under row and column scalings, Theorem~\ref{THM:realizvariety} implies the following corollary. We denote the torus of row and column scalings, $(\kk^*)^n\times(\kk^*)^h$, by $T_{n,h}$. 

\begin{corollary}The slack variety is closed under the action of the group $T_{n,h}$, where $(\kk^*)^n$ acts by row scaling (left multiplication by diagonal matrices) and $(\kk^*)^h$ acts by column scaling (right multiplication by diagonal matrices). \label{COR:scaleclosed}\end{corollary}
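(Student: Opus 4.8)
The plan is to deduce this corollary directly from Theorem~\ref{THM:realizvariety} together with the invariance of the set of realizations under row and column scaling, which was already recorded implicitly via Lemmas~\ref{LEM:pe} and~\ref{LEM:columnscale}. First I would fix $\ss \in \VV(I_M)$ and show the scaled point still lies in $\VV(I_M)$. If $\ss \in \VV(I_M)\cap(\kk^*)^t$, then by Theorem~\ref{THM:realizvariety} there is a realization $V$ with $S_{M[V]} = S_M(\ss)$. Given $(\pp,\qq) \in T_{n,h} = (\kk^*)^n\times(\kk^*)^h$ with associated diagonal matrices $D_n = \mathrm{diag}(\pp)$ and $D_h = \mathrm{diag}(\qq)$, the matrix $D_n S_M(\ss) D_h$ has the same support as $S_M(\ss)$ (scaling by nonzero entries does not change which entries are zero), so it equals $S_M(\ss')$ for the point $\ss'$ obtained by multiplying each coordinate $s_{ij}$ by $p_i q_j$; note $\ss' \in (\kk^*)^t$. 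Moreover $D_n S_M(\ss) D_h = (D_n V^\top)(W D_h)$ still has rank $d+1$, so it is the slack matrix of the realization whose vectors are the rows of $D_n V^\top$, i.e. $V$ with each column $\vv_i$ scaled by $p_i$ — indeed $M[V] = M$ implies $M[D_n V^\top{}^\top]$ has the same matroid since column scaling does not alter linear (in)dependence. Hence $S_M(\ss') = S_{M[V']}$ for a realization $V'$ of $M$, and Theorem~\ref{THM:realizvariety} gives $\ss' \in \VV(I_M)$.

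The only remaining point is the boundary case $\ss \in \VV(I_M)$ but $\ss \notin (\kk^*)^t$. Here I would invoke the saturation in Definition~\ref{DEFN:symbslack}: because $I_M$ is saturated with respect to the product of all variables, the variety $\VV(I_M)$ has no component contained in a coordinate hyperplane, so $\VV(I_M)\cap(\kk^*)^t$ is dense in $\VV(I_M)$ in the Zariski topology. The torus action $(\pp,\qq)\cdot\ss = (p_i q_j s_{ij})_{ij}$ is given by polynomial (indeed monomial) maps, hence continuous, and for fixed $(\pp,\qq)$ it is a morphism $\kk^t \to \kk^t$ that restricts to a morphism $\VV(I_M) \to \kk^t$. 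Since this morphism sends the dense subset $\VV(I_M)\cap(\kk^*)^t$ into $\VV(I_M)$ (by the previous paragraph) and $\VV(I_M)$ is closed, it sends all of $\VV(I_M)$ into $\VV(I_M)$. Thus $\VV(I_M)$ is invariant under each element of $T_{n,h}$, which is the assertion.

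Alternatively, and perhaps cleaner, one can argue purely algebraically: the coordinate change $x_{ij}\mapsto p_i q_j x_{ij}$ with $p_i,q_j\in\kk^*$ scales each $(d+2)$-minor of $S_M(\xx)$ by a nonzero monomial in the $p_i,q_j$ (since every minor is a sum of products of entries, each product using exactly one entry from each of $d+2$ rows and $d+2$ columns, so the total $\pp,\qq$-weight is the same across all terms of a given minor), hence maps the ideal of $(d+2)$-minors to itself; it also obviously fixes the monomial $\prod x_{ij}$ up to a unit, so it preserves the saturation $I_M$, and therefore preserves $\VV(I_M)$. I expect the main (very minor) obstacle is simply being careful about the non-torus points of $\VV(I_M)$ — the realization-theoretic statements only see $\VV(I_M)\cap(\kk^*)^t$ — which is why the density argument coming from saturation, or the direct monomial-weight computation on the minors, is the cleanest route.
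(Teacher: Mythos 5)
Your proposal is correct, and its first paragraph is exactly the argument the paper has in mind: the paper offers no separate proof, only the one-line remark that the corollary follows from Theorem~\ref{THM:realizvariety} because the set of realizations is closed under row and column scalings. Where you go beyond the paper is in noticing that this realization-theoretic argument only controls the points of $\VV(I_M)\cap(\kk^*)^t$, and in supplying two ways to extend the invariance to all of $\VV(I_M)$. Of your two fixes, the algebraic one is the cleaner and fully general: the substitution $x_{ij}\mapsto p_iq_jx_{ij}$ is a graded automorphism for the $\mathbb{Z}^{n+h}$-grading by rows and columns, every $(d+2)$-minor of $S_M(\xx)$ is multihomogeneous for this grading and hence gets rescaled by a unit, and saturation by the (also multihomogeneous) product of variables is preserved; this works over any field and for every point of $\VV(I_M)$. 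The density route has a small caveat you should be aware of: the identity $\VV(J:\mathfrak{f}^\infty)=\overline{\VV(J)\setminus\VV(\mathfrak{f})}$, and hence the density of $\VV(I_M)\cap(\kk^*)^t$ in $\VV(I_M)$, relies on the Nullstellensatz and so is only guaranteed over an algebraically closed field (the paper is loose on this point throughout, e.g.\ in Proposition~\ref{PROP:nonreal}). Either way, your argument is complete where the paper's is merely asserted.
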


Theorem~\ref{THM:realizvariety} and Corollary~\ref{COR:scaleclosed} tell us that the slack variety is a realization space for matroid $M$ and the slack variety modulo the action of $T_{n,h}$ is a realization space for the projective equivalence classes of realizations of $M$.

\begin{definition} \label{DEFN:slackreal} The \emph{slack realization space} of a rank $d+1$ matroid $M$ on $n$ elements with $h$ hyperplanes is the image of the slack variety inside a product of projective spaces 
$\VV(I_M)\cap(\kk^*)^t \hookrightarrow (\mathbb{P}^{n-1})^h,
$ 
where $\ss$ is sent to the columns of $S_M(\ss)$.
\end{definition}

\begin{prop}
Let $M$ be a rank $d+1$ matroid on $n$ elements with $h$ hyperplanes. Then the points of its slack realization space are in one-to-one correspondence with linear equivalence classes of realizations of $M$.
\end{prop}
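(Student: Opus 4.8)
The plan is to realize the slack realization space as the image of an explicit map, then show that map descends to a bijection onto linear equivalence classes, using Theorem~\ref{THM:realizvariety} for surjectivity and Lemma~\ref{LEM:columnscale} to identify the fibers. First I would record that the column map is well defined: for each hyperplane $H_j$ there is at least one element $i\notin H_j$ (since $H_j$ has rank $d<d+1$), so for $\ss\in\VV(I_M)\cap(\kk^*)^t$ the $j$-th column of $S_M(\ss)$, whose nonzero entries are exactly the values $s_{ij}$ with $i\notin H_j$ and which all lie in $\kk^*$, is a nonzero vector; hence $\ss\mapsto(\mathrm{col}_1(S_M(\ss)),\dots,\mathrm{col}_h(S_M(\ss)))$ indeed lands in $(\mathbb{P}^{n-1})^h$. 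On the side of realizations, I would attach to each realization $V$ the tuple $\Psi(V)\in(\mathbb{P}^{n-1})^h$ of projective classes of the columns of any slack matrix $S_{M[V]}$; by Definition~\ref{DEFN:slackmatrix} the slack matrix of $V$ is only defined up to rescaling each column by an element of $\kk^*$, so $\Psi(V)$ depends only on $V$.

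Next I would check that the image of the column map coincides with $\{\Psi(V) : V\text{ a realization of }M\}$, which by definition identifies this set with the slack realization space. For the inclusion ``$\subseteq$'', given $\ss\in\VV(I_M)\cap(\kk^*)^t$, Theorem~\ref{THM:realizvariety} produces a realization $V$ with $S_{M[V]}=S_M(\ss)$, so the image of $\ss$ equals $\Psi(V)$. For ``$\supseteq$'', given a realization $V$ write $S_{M[V]}=S_M(\ss)$; then $\ss\in(\kk^*)^t$ because $\supp(S_{M[V]})=\supp(S_M)$, and $\ss\in\VV(I_M)$ by Theorem~\ref{THM:realizvariety} (equivalently, because $\rk S_{M[V]}=d+1$ by Corollary~\ref{COR:rank} kills the $(d+2)$-minors), so $\Psi(V)$ lies in the image.

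Finally I would describe the fibers. Two realizations $V,V'$ have $\Psi(V)=\Psi(V')$ exactly when, for every $j$, the $j$-th columns of $S_{M[V]}$ and $S_{M[V']}$ are proportional; since both columns are nonzero and share the same zero pattern, the scalars involved are units, so this says precisely that $S_{M[V']}=S_{M[V]}D_h$ for some invertible diagonal $D_h\in\kk^{h\times h}$, i.e. the two slack matrices agree up to column scaling. By Lemma~\ref{LEM:columnscale} this holds if and only if $V$ and $V'$ are linearly equivalent. Hence $\Psi$ factors through an injection of the set of linear equivalence classes of realizations into $(\mathbb{P}^{n-1})^h$ whose image is, by the previous paragraph, exactly the slack realization space; this is the desired one-to-one correspondence.

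The argument is largely bookkeeping assembled from Theorem~\ref{THM:realizvariety} and Lemma~\ref{LEM:columnscale}, and the one point I would flag as needing care — the main (mild) obstacle — is confirming that passage from affine slack matrices to tuples in $(\mathbb{P}^{n-1})^h$ implements ``up to column scaling by $\kk^*$'' precisely: this uses both that every column of a slack matrix of a realization is a nonzero vector and that the common support condition forces the proportionality constants between two such slack matrices to be units rather than merely scalars.
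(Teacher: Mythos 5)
Your proposal is correct and follows the same route as the paper: the embedding into $(\mathbb{P}^{n-1})^h$ identifies exactly the column-scaling orbits, and Lemma~\ref{LEM:columnscale} then matches these with linear equivalence classes. The paper's proof is a two-line version of yours; your extra care about well-definedness of the column map and about the proportionality constants being units is sound but is the sort of bookkeeping the paper leaves implicit.
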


\begin{proof}
Under this embedding, two slack matrices which differ by column scaling are the same point in $(\mathbb{P}^{n-1})^h$. So, the result follows by Lemma~\ref{LEM:columnscale}.
\end{proof}

Next we describe a known model for the realization space of a matroid arising from a subvariety of a Grassmannian. 
The \emph{Grassmannian} $Gr(d+1,n)$ is a variety whose points correspond to ${(d+1)}$-dimensional linear subspaces of a fixed $n$-dimensional vector space $\Lambda$. 
It embeds into $\PP^{{n \choose d+1}-1}$ as follows. Any $(d+1)$-dimensional linear subspace of $\Lambda$ can be described as the row space of a $(d+1)\times n$ matrix of rank $d+1$. However, two matrices $A$ and $B$ have the same row space when then there is a matrix $G \in GL(d+1,\Lambda)$ such that $A = GB$. Thus, to ensure that we have a one-to-one correspondence between subspaces and points in the Grassmannian, we record a $(d+1) \times n$ matrix by its vector of $(d+1)$-minors. We call this the \emph{Pl\"{u}cker vector}, and it has coordinates indexed by subsets $\sigma$ of $[n]$ of size $d+1$.
The \emph{Pl\"{u}cker ideal} $P_{d+1,n} \subseteq \kk[\pp] = \kk[p_\sigma\ |\ \sigma \subset [n],\ |\sigma| = d+1]$ is the set of all polynomials which vanish on every vector of $(d+1)$-minors coming from some $(d+1)\times n$ matrix. It is generated by the homogeneous quadratic Pl\"ucker relations and cuts out the Grassmannian as a variety inside $\PP^{{n \choose d+1} -1}$.


If we have a rank $d+1$ matroid $M=(E,\BB)$ with realization $V\in\kk^{(d+1)\times n}$, the Pl\"{u}cker coordinate $p_\sigma$ of $V$ is zero if and only if $\sigma$ is a dependent set of $M$. 
Thus, realizations of $M$ correspond to the subvariety of $Gr(d+1,n)$ defined by setting the Pl\"ucker coordinates of non-bases to 0. This variety is also cut out by an ideal, namely, 
the \emph{Grassmannian ideal} $P_M \subset \kk[\pp_\mathcal{B}] $ of $M$,
$$
P_M := (P_{d+1,n} + \langle p_\sigma \ :\ \sigma \not \in \mathcal{B} \rangle) \cap \kk[\pp_\mathcal{B}],
$$
where $\kk[\pp_\mathcal{B}] := \kk[p_\sigma\ |\ \sigma \text{ is a basis of $M$}]$ is the ring with one variable for each basis $\sigma\in\BB$. 
This is the ideal obtained from $P_{d+1,n} \subset \kk[\pp]$ by setting the variables indexed by non-bases of $M$ to 0.


\begin{definition}\label{DEFN:Grassmannian} The \emph{Grassmannian of $M$}, denoted $Gr(M)$, is $\VV(P_M) \cap (\kk^*)^{|\mathcal{B}|}.$ The points in $Gr(M)$ correspond to $GL(\kk^{d+1})$ equivalence classes of $(d+1)\times n$ matrices which realize the matroid $M$. 
\end{definition}


\subsection{Universal realization ideal}
Given a matroid $M$, we now define an ideal whose variety contains pairs $(\ss,\qq)$, where $\qq$ is a Pl\"{u}cker vector and $\ss$ the nonzero entries of a slack matrix, and both come from the same realization of $M$.

If $V$ is a realization of a rank $d+1$ matroid $M = (E,\BB)$, then a slack matrix $S_{M[V]}$ can be filled with the Pl\"ucker coordinates of $V$, which can be seen from \eqref{EQ:pluckerdet}. Given a hyperplane $H_j\in\HH(M)$, we record all possible substitutions of Pl\"ucker variables for slack variables using a matrix $M_{H_j}$ whose rows are indexed by $i\in E\backslash H_j$, and whose columns are indexed by subsets $J=\{j_1,\ldots, j_d\} \subset E$ with $\overline{J\,} = H$; that is,
$$M_{H_j} = \begin{bmatrix}
x_{i_1j} 	& \sgn(i_1,J_1)\cdot p_{i_1\cup J_1}	 & \cdots 	&  \sgn(i_1,J_k)\cdot p_{i_1\cup J_k}\\
\vdots 	&  \vdots 						 & \cdots 	& \vdots \\
x_{i_mj}	& \sgn(i_m,J_1)\cdot p_{i_m\cup J_1}& \cdots 	& \sgn(i_m,J_k)\cdot p_{i_m\cup J_k}
\end{bmatrix},
$$
where $\sgn(i,J)$ is the sign of the permutation putting $(i,j_1,\ldots,j_d)$ in increasing order.

\begin{example} 
\label{EG:four_lines2}
Recall the matroid $M_4$ of Example \ref{EG:four_lines} pictured in Figure \ref{FIG:four_lines}. Consider the hyperplane $H_2 = 246$. It corresponds to slack variables $x_{i,2}$ for $i=1,3,5$ and its independent subsets are $24$, $26$, and $46$. So the matrix $M_{246}$ has the form 
$$
M_{246}=
\left[\begin{array}{rrrr}
x_{1,2} & p_{124} & p_{126} & p_{146} \\
x_{3,2} & -p_{234} & -p_{236} & p_{346} \\
x_{5,2} & p_{245} & -p_{256} & -p_{456} \\
\end{array}\right].
$$
\end{example}

\begin{definition} \label{DEFN:mother}
Let $M=(E,\BB)$ be a matroid, $P_M$ be the Grassmannian ideal of $M$ and $I_2(M_{H_j})$ be the ideal of $2$-minors of the matrix $M_{H_j}$.
The \emph{universal realization ideal} of $M$ is 
$$U_M := P_M + \sum_{H_j \in \mathcal{H}(M)} I_2(M_{H_j}) \subseteq \kk[\mathbf{x},\mathbf{p]}.$$ 
 
\end{definition}
Intuitively, insisting that the matrices $M_{H_j}$ have rank 1 corresponds to ensuring the columns of the slack matrix are simply scaled versions of the appropriate Pl\"ucker coordinates. 
We now state the main result of this section. 
\begin{theorem} Let $M = ([n],\BB)$ be a rank $d+1$ matroid with universal realization ideal $U_M\subseteq\kk[\xx,\pp]$. Then $\VV(U_M)\in \kk^t\times\kk^{|\BB|}$ with
\begin{enumerate}[label=(\roman{enumi})]
\item the projection of $\VV(U_M)$ onto the Pl\"ucker coordinates, $\pi_\pp:\VV(U_M)\to \kk^{|\BB|},$ is the Grassmannian of the matroid, 
$$\overline{\pi_\pp(\VV(U_M))} = \overline{Gr(M)};$$
\item the projection of $\VV(U_M)\cap\left( (\kk^*)^t\times (\kk^*)^{|\BB|}\right)$ onto the slack coordinates, $\pi_\xx: \VV(U_M)\to \kk^t,$ is the set of slack matrices of realizations of $M$, 
$$\pi_\xx\left(\VV(U_M)\cap \left( (\kk^*)^t\times (\kk^*)^{|\BB|}\right)\right) = \VV(I_M)\cap(\kk^*)^t.$$
\end{enumerate}
\label{THM:mothervariety}\end{theorem}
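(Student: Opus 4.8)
The plan is to prove both parts by exploiting the fact that the ideal $I_2(M_{H_j})$ enforces, on the torus, precisely that the $j$-th column of the slack matrix is a common scalar multiple of the vector of signed Plücker coordinates $(\sgn(i,J)\,p_{i\cup J})_{i\notin H_j}$ attached to $H_j$. Both directions will rest on the determinantal identity behind \eqref{EQ:pluckerdet}, which says exactly that a genuine realization $V$ with Plücker vector $\qq$ has $S_{M[V]}$ equal to $S_M(\ss)$ for an $\ss$ obtained by substituting, in each column $j$, the appropriate signed Plücker coordinates times a single scalar $\lambda_j$ (the scalar coming from the choice of representative $\lambda_j\alpha_j$ of the hyperplane normal). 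This gives the containment $\supseteq$ in both parts directly: starting from a realization $V$, take $\qq$ its Plücker vector (a point of $Gr(M)$, hence of $\VV(P_M)$), and take $\ss$ from a slack matrix of $V$; then $(\ss,\qq)\in\VV(U_M)$ because the $M_{H_j}$ all have rank $\le 1$ at this point, by \eqref{EQ:pluckerdet}. If we additionally choose each $\lambda_j\in\kk^*$ and note that all Plücker coordinates of bases are nonzero, we land in the torus, giving $\supseteq$ in (ii); and projecting to $\pp$ gives all of $Gr(M)$, giving $\supseteq$ in (i) before taking closures.

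For (i), the reverse containment $\overline{\pi_\pp(\VV(U_M))}\subseteq\overline{Gr(M)}$ is immediate since $U_M\supseteq P_M$, so $\pi_\pp(\VV(U_M))\subseteq\VV(P_M)=\overline{Gr(M)}$ (the closure of $Gr(M)$ being $\VV(P_M)$, as $Gr(M)$ is the open subset where basis coordinates are nonzero and $P_M$ is prime — or at least $\VV(P_M)$ is its Zariski closure). Combined with the previous paragraph, which shows $Gr(M)\subseteq\pi_\pp(\VV(U_M))$, taking closures yields equality. So (i) is essentially formal once the substitution identity is in hand.

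For (ii) the reverse containment is the substantive point: given $\ss\in\VV(I_M)\cap(\kk^*)^t$, we must produce a $\qq\in(\kk^*)^{|\BB|}$ with $(\ss,\qq)\in\VV(U_M)$. By Theorem~\ref{THM:realizvariety}, $\ss$ comes from a genuine realization $V$ of $M$; let $\qq_0$ be the Plücker vector of $V$. Then $S_M(\ss)$ and the slack matrix built from $V$ using representatives $\alpha_j$ agree up to \emph{column scaling} (both have rank $d+1$, same support — Lemma~\ref{LEM:columnscale}), say $S_M(\ss)=S_M(\ss_0)D_h$ where $\ss_0$ is the "Plücker-filled" slack matrix of $V$ via \eqref{EQ:pluckerdet}. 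The diagonal entries $d_j\in\kk^*$ of $D_h$ record how much column $j$ was rescaled; I then set $\qq := \qq_0$ is \emph{not} quite right, because rescaling a column of the slack matrix is not the same as rescaling a Plücker coordinate. Instead, the correct move is: since $M_{H_j}$ needs rank $1$ at $(\ss,\qq)$, and column $j$ of $S_M(\ss)$ is $d_j$ times the signed-Plücker vector of $\qq_0$ restricted to $H_j$, the rank-$1$ condition holds automatically with $\qq=\qq_0$ — the scalar relating the $x$-column to the $p$-column is simply $d_j$ and need not be $1$. Thus $(\ss,\qq_0)\in\VV(I_2(M_{H_j}))$ for every $j$, and $\qq_0\in\VV(P_M)\cap(\kk^*)^{|\BB|}=Gr(M)$ since $V$ realizes $M$. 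Hence $(\ss,\qq_0)\in\VV(U_M)\cap((\kk^*)^t\times(\kk^*)^{|\BB|})$ and projects to $\ss$, completing $\subseteq$. The forward containment $\pi_\xx(\VV(U_M)\cap\text{torus})\subseteq\VV(I_M)\cap(\kk^*)^t$ follows because on the torus the rank-$1$ conditions force every column of $S_M(\xx)$ to be a scalar multiple of a Plücker vector, so $S_M(\ss)=S_M(\ss')D_h$ for a matrix $S_M(\ss')$ of rank $\le d+1$ (its columns are minors of a single $(d+1)\times n$ Plücker matrix built from $\qq$, which has rank $\le d+1$), hence $\rk S_M(\ss)\le d+1$, so its $(d+2)$-minors vanish and $\ss\in\VV(I_M)$; membership in the torus is by hypothesis.

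The main obstacle I expect is bookkeeping the sign conventions and the distinction between "scaling a slack column" and "scaling a Plücker coordinate": one must be careful that the rank-$1$ conditions on $M_{H_j}$ are invariant under column scaling of the slack matrix (they are — rescaling column $j$ just rescales the common multiplier), so that Lemma~\ref{LEM:columnscale} can be applied freely, and that the signs $\sgn(i,J)$ in $M_{H_j}$ are exactly those appearing in the Laplace/cofactor expansion \eqref{EQ:pluckerdet} of $\alpha_{H_j}$, so that a true realization actually satisfies the $2$-minor equations on the nose. Once these two compatibility checks are done, everything else is a routine combination of Theorem~\ref{THM:realizvariety}, Lemma~\ref{LEM:columnscale}, and the definitions.
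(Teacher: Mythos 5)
Your part (ii) is, in both directions, essentially the paper's own argument: reconstruct a realization $V$ from $\qq$ (resp.\ from $\ss$ via Theorem~\ref{THM:realizvariety}), use the rank-one conditions on the $M_{H}$ together with \eqref{EQ:pluckerdet} to match the slack column with the signed Pl\"ucker columns up to a single scalar per hyperplane, and finish with Corollary~\ref{COR:scaleclosed}. The compatibility checks you defer (signs, and the fact that rescaling a slack column only rescales the common multiplier) are exactly the ones the paper handles with elementary column operations, so that part is fine.

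Part (i) is where you depart from the paper, and there is a genuine gap. You sandwich $Gr(M)\subseteq\pi_\pp(\VV(U_M))\subseteq\VV(P_M)$ and close it by asserting $\VV(P_M)=\overline{Gr(M)}$. That identity is exactly what you may not assume: $\VV(P_M)$ always contains the origin (its generators are homogeneous of positive degree), while $Gr(M)=\VV(P_M)\cap(\kk^*)^{|\BB|}$ is empty for a non-realizable matroid such as the Fano plane over $\CC$; more generally $\VV(P_M)$ can have components inside coordinate hyperplanes not lying in the closure of its torus part, and $P_M$ need not be prime. So your sandwich only traps $\overline{\pi_\pp(\VV(U_M))}$ between two sets that need not coincide, and it never establishes the containment $\VV(P_M)\subseteq\overline{\pi_\pp(\VV(U_M))}$, which is the substantive half of (i) under the paper's reading $\overline{Gr(M)}=\VV(P_M)$. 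The paper gets this from Lemma~\ref{LEM:pluckerproj} ($U_M\cap\kk[\pp]=P_M$, proved by a Gr\"obner-basis elimination argument) together with the closure theorem. If you want to avoid elimination theory, you must lift an \emph{arbitrary} point $\qq\in\VV(P_M)$, not just a torus point: set the slack column of each $M_H$ equal to one of its Pl\"ucker columns and check that the $2$-minors of $M_H$ involving only Pl\"ucker variables already vanish on $\VV(P_M)$ (they are $3$-term Pl\"ucker relations with one term killed by a non-basis variable, as observed in the paper's proof of Lemma~\ref{LEM:pluckerproj}). Some argument of this kind is required; your proposal does not supply it.
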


The proof of this theorem requires several preliminary lemmas. 
We first have the following result on Gr\"obner bases. (For notation and further details see \cite{CLOS}.)

\begin{lemma} Fix an elimination order on $\kk[\xx,\pp]$. Given two $\xx$-homogeneous polynomials $f,g$ and an $\xx$-homogeneous set $\mathcal{G}\subset\kk[\xx,\pp]$,  if $h:=\overline{S(f,g)}^{\mathcal{G}}$ with $h\neq 0$, then $\deg_\xx(h) \geq \max\{\deg_\xx(f),\deg_\xx(g)\}$. 
\label{LEM:sausage} \end{lemma}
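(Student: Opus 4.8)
The statement is a "no degree drop" lemma for the division/normal-form algorithm when everything in sight is homogeneous in the $\xx$-variables (the $\pp$-variables being treated as coefficients under the elimination order). My plan is to prove it in two stages: first establish that the $S$-polynomial $S(f,g)$ is itself $\xx$-homogeneous with $\deg_\xx S(f,g) \ge \max\{\deg_\xx f,\deg_\xx g\}$, and then show that each reduction step in computing $\overline{S(f,g)}^{\mathcal G}$ preserves $\xx$-homogeneity and does not decrease the $\xx$-degree. The conclusion then follows since $h$ is the result of finitely many such steps, and $h\ne 0$ by hypothesis.

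\textbf{Step 1 (grading setup).} Equip $\kk[\xx,\pp]$ with the grading that assigns degree $1$ to each $x_{ij}$ and degree $0$ to each $p_\sigma$; call a polynomial $\xx$-homogeneous if it is homogeneous for this grading, and write $\deg_\xx$ for the corresponding degree. Observe that under the chosen elimination order (eliminating $\xx$, so $\xx$-monomials are larger), if $f$ is $\xx$-homogeneous then its leading term $\mathrm{LT}(f)$ has $\deg_\xx(\mathrm{LT}(f)) = \deg_\xx(f)$; more generally every term of $f$ has the same $\xx$-degree. I will note that the set of $\xx$-homogeneous polynomials is closed under multiplication by monomials (adding their $\xx$-degrees) and under addition of elements of equal $\xx$-degree.

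\textbf{Step 2 ($S$-polynomials).} Recall $S(f,g) = \frac{\mathrm{lcm}(\mathrm{LM}(f),\mathrm{LM}(g))}{\mathrm{LT}(f)} f - \frac{\mathrm{lcm}(\mathrm{LM}(f),\mathrm{LM}(g))}{\mathrm{LT}(g)} g$. Let $m = \mathrm{lcm}(\mathrm{LM}(f),\mathrm{LM}(g))$. Since $\mathrm{LM}(f)$ divides $m$ and $\deg_\xx(\mathrm{LM}(f)) = \deg_\xx(f)$, the cofactor $m/\mathrm{LT}(f)$ is a monomial of $\xx$-degree $\deg_\xx(m) - \deg_\xx(f)$, so $\frac{m}{\mathrm{LT}(f)} f$ is $\xx$-homogeneous of $\xx$-degree exactly $\deg_\xx(m)$; likewise for the $g$-term. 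Hence $S(f,g)$ is $\xx$-homogeneous (a difference of two $\xx$-homogeneous polynomials of the same $\xx$-degree) of $\xx$-degree $\deg_\xx(m)$. Finally $\mathrm{LM}(f) \mid m$ gives $\deg_\xx(m) \ge \deg_\xx(\mathrm{LM}(f)) = \deg_\xx(f)$, and similarly $\deg_\xx(m)\ge\deg_\xx(g)$, so $\deg_\xx S(f,g) \ge \max\{\deg_\xx f,\deg_\xx g\}$.

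\textbf{Step 3 (reduction preserves the bound).} The normal form $\overline{S(f,g)}^{\mathcal G}$ is computed by repeatedly replacing the current remainder $r$ by $r - \frac{c\,\tt}{\mathrm{LT}(g_k)} g_k$ whenever some term $c\,\tt$ of $r$ is divisible by $\mathrm{LM}(g_k)$ for some $g_k\in\mathcal G$. I will argue by induction that every intermediate $r$ is $\xx$-homogeneous of $\xx$-degree $\deg_\xx S(f,g)$: at a single step, $g_k$ is $\xx$-homogeneous with $\deg_\xx(\mathrm{LM}(g_k)) = \deg_\xx(g_k)$, the term $c\,\tt$ has $\xx$-degree $\deg_\xx(r)$ (since $r$ is $\xx$-homogeneous), so the cofactor $\frac{c\,\tt}{\mathrm{LT}(g_k)}$ has $\xx$-degree $\deg_\xx(r)-\deg_\xx(g_k)$ and $\frac{c\,\tt}{\mathrm{LT}(g_k)} g_k$ is $\xx$-homogeneous of $\xx$-degree $\deg_\xx(r)$; subtracting it from $r$ keeps $r$ $\xx$-homogeneous of the same $\xx$-degree (or kills it entirely). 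The division algorithm terminates, so if $h = \overline{S(f,g)}^{\mathcal G}\ne 0$ then $h$ is $\xx$-homogeneous with $\deg_\xx(h) = \deg_\xx S(f,g) \ge \max\{\deg_\xx f,\deg_\xx g\}$, as claimed.

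\textbf{Main obstacle.} There is no deep difficulty here; the only thing to be careful about is the interplay between the elimination order and the $\xx$-grading — specifically the claim that for an $\xx$-homogeneous $f$ the leading term carries the full $\xx$-degree, i.e. that the $\pp$-variables never "inflate" a lower-$\xx$-degree term past an $\xx$-homogeneous one. But this is automatic because all terms of an $\xx$-homogeneous polynomial already share the same $\xx$-degree, so the statement "$\deg_\xx(\mathrm{LM}(f)) = \deg_\xx(f)$" holds for \emph{any} monomial order, not just elimination orders. The role of the elimination hypothesis is only to make the eventual application (in the proof of Theorem~\ref{THM:mothervariety}) give an elimination ideal; for this lemma itself the bookkeeping is the entire content.
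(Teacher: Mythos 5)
Your proof is correct, and it is the standard argument: the paper actually omits the proof of this lemma entirely (deferring to Cox--Little--O'Shea for background), and what you supply --- $S$-polynomial formation and each division step preserve $\xx$-homogeneity and the $\xx$-degree $\deg_\xx(\operatorname{lcm}(\mathrm{LM}(f),\mathrm{LM}(g)))\ge\max\{\deg_\xx(f),\deg_\xx(g)\}$ --- is exactly the intended bookkeeping. Your closing observation that the elimination hypothesis plays no role in this lemma itself (only in its application in Lemma~\ref{LEM:pluckerproj}) is also accurate.
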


%

\begin{lemma} The Grassmannian ideal of a matroid can be obtained by eliminating the slack variables from its universal realization ideal. That is,
$P_M= U_M \cap \kk[\mathbf{p}].$
\label{LEM:pluckerproj}
\end{lemma}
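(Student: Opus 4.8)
The plan is to prove the two inclusions $P_M \subseteq U_M \cap \kk[\pp]$ and $U_M \cap \kk[\pp] \subseteq P_M$ separately. The first inclusion is immediate: by definition $U_M = P_M + \sum_{H_j} I_2(M_{H_j})$, and $P_M \subseteq \kk[\pp]$, so any element of $P_M$ lies in $U_M \cap \kk[\pp]$. All the work is in the reverse inclusion, which says that eliminating the slack variables from $U_M$ produces nothing new beyond $P_M$.

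For the reverse inclusion I would argue at the level of Gröbner bases, using Lemma~\ref{LEM:sausage}. Fix an elimination order on $\kk[\xx,\pp]$ that eliminates the $\xx$ variables, so that $U_M \cap \kk[\pp]$ is generated by the elements of a Gröbner basis $\mathcal{G}$ of $U_M$ lying in $\kk[\pp]$. Start from the natural generating set: the Plücker relations and Grassmannian-ideal generators (which are already $\xx$-free, i.e.\ of $\xx$-degree $0$) together with the $2$-minors of the matrices $M_{H_j}$. Each $2$-minor of $M_{H_j}$ has the form $x_{ij}\cdot(\pm p_{\bullet}) - x_{i'j}\cdot(\pm p_{\bullet})$ or $(\pm p_\bullet)(\pm p_\bullet) - (\pm p_\bullet)(\pm p_\bullet)$; the former is $\xx$-homogeneous of $\xx$-degree $1$, and the latter is $\xx$-free. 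The key observation is that every generator in this set is $\xx$-homogeneous, so the hypotheses of Lemma~\ref{LEM:sausage} are in force throughout a Buchberger computation: whenever we reduce an $S$-polynomial of two $\xx$-homogeneous polynomials against an $\xx$-homogeneous set, the nonzero remainder $h$ satisfies $\deg_\xx(h) \geq \max\{\deg_\xx(f),\deg_\xx(g)\}$. Consequently, the only way to produce a new Gröbner basis element of $\xx$-degree $0$ is from two elements that are \emph{already} of $\xx$-degree $0$ — i.e.\ from elements of $P_{d+1,n}$ and the $p_\sigma$ with $\sigma \notin \BB$ (together with any purely-$\pp$ generators among the $2$-minors, which as substitutions of Plücker variables into known identities already lie in the Plücker ideal). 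Running Buchberger's algorithm on just these $\xx$-free generators stays inside $\kk[\pp]$ and produces exactly a Gröbner basis of $P_{d+1,n} + \langle p_\sigma : \sigma \notin \BB\rangle$, whose intersection with $\kk[\pp_\BB]$ is $P_M$. Hence the $\xx$-free part of any Gröbner basis of $U_M$ generates $P_M$, giving $U_M \cap \kk[\pp] = P_M$.

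The main obstacle is making the induction in the previous paragraph airtight: one must confirm that $\xx$-homogeneity is genuinely preserved under forming $S$-polynomials (it is, since $S(f,g)$ is a difference of two monomial multiples chosen to cancel leading terms, and the $\xx$-degrees of those two products agree, being both equal to $\deg_\xx(\mathrm{lcm}) $ — here one uses that with an elimination order the leading term records the $\xx$-degree faithfully), and that the purely-$\pp$ $2$-minors of the $M_{H_j}$ contribute nothing outside $P_{d+1,n} + \langle p_\sigma : \sigma\notin\BB\rangle$. For the latter, each such minor is $p_{i\cup J}\,p_{i'\cup J'} - p_{i'\cup J}\,p_{i\cup J'}$ (up to sign), which is a $3$-term Plücker relation specialized by the vanishing of non-basis coordinates, hence already in the ideal; alternatively one notes these minors vanish on every genuine Plücker vector filling the slack matrix, so they lie in $P_{d+1,n}$. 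Once these two points are checked, the Gröbner-basis argument closes, and I would remark that Lemma~\ref{LEM:sausage} is doing exactly the job of guaranteeing elimination cannot "leak" $\xx$-degree downward to $0$.
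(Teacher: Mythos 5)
Your proposal is correct and follows essentially the same route as the paper: the trivial inclusion $P_M\subseteq U_M\cap\kk[\pp]$, then an elimination order, Lemma~\ref{LEM:sausage} applied through Buchberger's algorithm to show no new $\xx$-degree-$0$ elements arise, and the observation that the purely-$\pp$ $2$-minors of the $M_{H_j}$ are specialized $3$-term Pl\"ucker relations already lying in $P_M$. Your extra care in checking that $S$-polynomials preserve $\xx$-homogeneity is a welcome addition, but the argument is the paper's.
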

\begin{proof}
We obtain one containment by the definition of $U_M$, since
$$
P_M  =  P_M \cap \kk[\mathbf{p}] 
	 \subseteq  \left(P_M+\sum_{H\in\HH(M)} I_2(M_{H})\right) \cap \kk[\mathbf{p}] 
     = U_M\cap \kk[\mathbf{p}].
$$
It remains to show the reverse containment. 
Fix an elimination order on $\kk[\xx,\pp]$ eliminating the $\xx$ variables. Let $\mathcal{G}$ be a Gr\"{o}bner basis for $U_M$ with respect to this ordering. Then, it suffices to show that 
\begin{equation}
\label{EQN:buchberger}
\mathcal{G} \cap \kk[\pp] \subset P_M.
\end{equation}
If we start with a generating set satisfying \eqref{EQN:buchberger}, then by 
by Lemma~\ref{LEM:sausage}, any terms which are added to $\mathcal{G}$ after applying each step of Buchberger's algorithm with $\xx$-degree 0 must be the reduction of an $S$-pair of elements which also have $\xx$-degree 0, and are therefore also contained in $P_M$.

It remains to show that an initial generating set of $U_M$ satisfies \eqref{EQN:buchberger}. Taking the generating set of the definition, it is enough to show that any minor in $I_2(M_H)$ not containing a slack variable is already in $P_M$. It is not hard to check that any such minor already arises in $P_M$ as some 3-term Pl\"ucker relation having a term $p_\sigma p_\tau$ for some $\sigma\notin\BB$ which gets set to zero. 
%
\end{proof}


\begin{proof}[Proof of Theorem~\ref{THM:mothervariety}]\hfill
\begin{enumerate}[label=(\roman{enumi})]
\item This follows from the definition of $Gr(M)$ and Lemma~\ref{LEM:pluckerproj}.
\item[(ii, $\subset$)]
 Let $(\ss,\qq) \in \VV(U_M)\cap  \left((\kk^*)^t\times (\kk^*)^{|\BB|}\right)$. From $\qq$ we can obtain a $(d+1)\times n$ matrix~$V$ with Pl\"ucker vector whose nonzero coordinates come from~$\qq$. We claim that $V$ is a realization of $M$, so that $S_{M[V]}\in\VV(I_M)\cap(\kk^*)^t$ by Theorem~\ref{THM:realizvariety}. Furthermore, we claim that $S_{M[V]}$ and $S_M(\ss)$ are the same up to column scaling, so that  $\ss\in\VV(I_M)\cap(\kk^*)^t$ by Corollary~\ref{COR:scaleclosed}. 
	
	A subset of $d+1$ columns of $V$ is independent if and only if the 
	the corresponding Pl\"ucker coordinate is nonzero. Since $\qq\in(\kk^*)^{|\BB|}$, the nonzero Pl\"ucker coordinates correspond  to bases $B\in \BB$ of $M$. Hence $V$ is a realization of $M$. 
	
Fix a hyperplane $H$ and an independent subset $J=\{j_1,\ldots, j_d\}\subset H$. Then the first column of $M_H$ is $(s_{iH})_{i\notin H}^\top$, column $J$ is $(\sgn(i,J)\cdot q_{i\cup J})_{i\notin H}^\top$, and by definition of $U_M$, there exists $\lambda_H\in\kk^*$ such that 
$s_{iH} = \lambda_H\sgn(i,J)\cdot q_{i\cup J}$ for all~$i$. Since $\qq$ is the Pl\"ucker vector of $V$, this gives 
$$s_{iH} = \lambda_H\sgn(i,J)\cdot q_{i\cup J} = \lambda_H\det(\vv_i,\vv_{j_1},\ldots,\vv_{j_d}), \quad \forall i$$
so that using subset $J$ to calculate $\alpha_H$, and hence the entries of $S_{M[V]}$, as in Example~\ref{EG:four_lines}, we see that $S_{M[V]}D_h = S_{M}(\ss)$, for $D_h$ the diagonal matrix with entries $\lambda_H$.
	

\item[(ii, $\supset$)] Let $\ss\in\VV(I_M)\cap(\kk^*)^t$. By Theorem~\ref{THM:realizvariety}, $S_M(\ss)$ is the slack matrix of some realization $V$ of $M$. Let $\qq$ be the vector of Pl\"ucker coordinates of $V$. We claim $(\ss,\qq)\in\VV(U_M)\cap  {\left((\kk^*)^t\times (\kk^*)^{|\BB|}\right)}$. To see this it suffices to show that the columns of $M_H$, with entries defined using $(\ss,\qq)$, are scalar multiples of each other for each $H\in\HH(M)$; that is, the $I_2(M_H)$ ideals are satisfied, since the Pl\"ucker coordinates necessarily satisfy the Pl\"ucker ideal equations, and hence the equations of $P_M$.
	
	The $(i,H)$ slack entry $s_{iH}$ is of the form $\det(\vv_i,\ww_1,\ldots, \ww_d)$ for some choice of $\ww_1,\ldots, \ww_d$ which span the hyperplane $H$. Each subsequent column of $M_H$ has entries $\det(\vv_i,\vv_{j_1},\ldots,\vv_{j_d})$, obtained from $\qq$ as above, for $j_1<\cdots<j_d$ spanning $H$. Since each $\vv_{j_k}$ lies on hyperplane the $H$, there is a sequence of elementary column operations that takes the matrix with columns $\vv_i,\ww_1,\ldots, \ww_d$ to the one with columns $\vv_i,\vv_{j_1},\ldots,\vv_{j_d}$ for each $i$. These column operations change the determinant by some scale factor $\lambda\in\kk^*$ for all $i$, so that 
	each column of $M_H$ is a scalar multiple of the first column of slack entries as required.
\end{enumerate} \vspace{-10pt}
\end{proof}

\begin{corollary} Let $M=(E,\BB)$ be a rank $d+1$ matroid. Then
$$\sqrt{I_M} = \sqrt{U_M:\left(\prod_{i\in E, H\in\HH} x_{iH}\prod_{\sigma\in\BB}p_\sigma\right)^\infty}\bigcap \kk[\xx].$$
\label{COR:motherradical}\end{corollary}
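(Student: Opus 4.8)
The plan is to deduce Corollary~\ref{COR:motherradical} from Theorem~\ref{THM:mothervariety}(ii) by comparing varieties and invoking the Nullstellensatz. Concretely, set $F := \prod_{i\in E, H\in\HH} x_{iH}\prod_{\sigma\in\BB}p_\sigma$, the product of all variables of $\kk[\xx,\pp]$. First I would identify $\VV(U_M : F^\infty)$ with the Zariski closure $\overline{\VV(U_M)\cap\left((\kk^*)^t\times(\kk^*)^{|\BB|}\right)}$ — this is the standard fact that saturating by (a power of) $F$ geometrically removes exactly the components contained in the coordinate hyperplanes $\VV(F)$, i.e. $\VV(J:F^\infty) = \overline{\VV(J)\setminus\VV(F)}$ over an algebraically closed field. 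Then the elimination ideal $\bigl(U_M:F^\infty\bigr)\cap\kk[\xx]$ cuts out the Zariski closure of the image of this set under the projection $\pi_\xx$ onto the $\xx$-coordinates (closure of projection corresponds to the elimination ideal, again over $\overline{\kk}$).

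Next I would apply Theorem~\ref{THM:mothervariety}(ii), which tells us precisely that $\pi_\xx\bigl(\VV(U_M)\cap\left((\kk^*)^t\times(\kk^*)^{|\BB|}\right)\bigr) = \VV(I_M)\cap(\kk^*)^t$. Taking Zariski closures of both sides, the left-hand side becomes (the closure of) the variety cut out by the elimination ideal above, and the right-hand side becomes $\overline{\VV(I_M)\cap(\kk^*)^t}$. But $I_M$ is itself defined as a saturation by the product of its variables, so $\VV(I_M) = \overline{\VV(I_M)\setminus\VV(\prod x_{iH})} = \overline{\VV(I_M)\cap(\kk^*)^t}$; hence the two closures agree as varieties in $\kk^t$. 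Two ideals in $\kk[\xx]$ with the same variety (over an algebraically closed field) have the same radical by the Nullstellensatz, which yields $\sqrt{I_M} = \sqrt{\bigl(U_M:F^\infty\bigr)\cap\kk[\xx]}$, and since $\bigl(U_M:F^\infty\bigr)\cap\kk[\xx] = \bigl(U_M:F^\infty\bigr)\bigcap\kk[\xx]$ this is the claimed identity. (One should remark that the statement is understood over an algebraically closed field, or that all ideals are radical ideals of the relevant varieties; the saturation by $F^\infty$ can equally be split as saturating by $\prod x_{iH}$ and then by $\prod p_\sigma$.)

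A subtle point I would be careful about is the interaction of "closure of a projection" with "intersection with a torus": in general $\pi_\xx(Z\cap(\kk^*)^{t+|\BB|})$ need not be closed, but that is fine — the elimination ideal gives its closure, and since we only claim equality of radicals, passing to closures on both sides is exactly what the Nullstellensatz lets us absorb. I would also double-check that saturating $U_M$ by $F$ and then eliminating $\pp$ commutes appropriately with first intersecting with the torus; the clean way to phrase it is purely variety-theoretically as above, avoiding any need to commute these algebraic operations.

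The main obstacle is essentially bookkeeping rather than a genuine difficulty: making sure every "$=$" is justified at the level at which it holds (set-theoretic equality of varieties over $\overline{\kk}$ versus ideal equality), and that the closure operations are inserted in the right places so that the final equality of radicals is legitimate. No new geometric input beyond Theorem~\ref{THM:mothervariety} is needed; the work is in citing the correct standard facts (saturation removes components in $\VV(F)$; elimination computes closure of projection; Nullstellensatz equates radicals of ideals with equal varieties) and assembling them in order.
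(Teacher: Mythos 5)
Your argument is correct and is essentially the route the paper intends: the corollary is stated without proof as an immediate consequence of Theorem~\ref{THM:mothervariety}(ii), via exactly the standard facts you cite (saturation removes the components inside $\VV(F)$, elimination computes the closure of the projection, $\VV(I_M)=\overline{\VV(I_M)\cap(\kk^*)^t}$ because $I_M$ is saturated, and the Nullstellensatz over $\overline{\kk}$ identifies the radicals). The only slip is cosmetic: your final parenthetical identity is vacuous as written, and what you actually need there is the easy fact that $\sqrt{J}\cap\kk[\xx]=\sqrt{J\cap\kk[\xx]}$ for any ideal $J\subseteq\kk[\xx,\pp]$, which converts your conclusion $\sqrt{(U_M:F^\infty)\cap\kk[\xx]}$ into the form $\sqrt{U_M:F^\infty}\cap\kk[\xx]$ appearing in the statement.
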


 By universality \cite{mnev}, we do not expect that $I_M$ is radical for every matroid. Thus, Corollary \ref{COR:motherradical} may be the strongest relationship between $I_M$ and $U_M$.

\begin{example} 
\label{EG:four_lines3}
We now continue Example \ref{EG:four_lines}, for which we can verify Theorem~\ref{THM:mothervariety} at the level of ideals. 
Let $P_{M_4}$ be the Grassmannian ideal of this matroid, which is generated by the Pl\"{u}cker relations with variables $p_{123},p_{246},p_{345},p_{156}$ set to 0. Let $I$ be the ideal which guarantees each of the 7 matrices $M_{123}$, $M_{246}$, $M_{345}$ $M_{156}$, $M_{25}$, $M_{14}$, $M_{36}$ have rank 1; that is, $I = \sum_{H\in\HH(M_4)} I_2(M_H)$. 
Then, the universal realization ideal is $U_M = I + P_{M_4}$. 
In this case, we can compute that $P_M = U_M\cap\kk[\pp]$ and $I_M = U_M:\left(\prod x_{iH}\prod p_\sigma\right)^\infty \cap\kk[\xx]$.
\end{example}

\section{Non-realizability}
\label{SEC:nonreal}

In this section we illustrate how the slack ideal can be used to determine matroid realizability over a given field. 
This is a well-studied problem \cite{BLSWZ,weird_bernd_book,mnev} for which a complete characterization is only known in a very limited number of cases. Observe that Theorem~\ref{THM:realizvariety} gives us the following criterion for realizability.

\begin{corollary} \label{COR:varietytest}
A matroid $M$ is realizable over $\kk$ if and only if $\VV(I_M)\cap(\kk^*)^t \not = \emptyset$.
\end{corollary}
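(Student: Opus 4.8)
The plan is to read this off directly from Theorem~\ref{THM:realizvariety}, which already encodes the correspondence between realizations of $M$ and points of $\VV(I_M)$ with all coordinates nonzero; essentially no work remains beyond unwinding the biconditional in each direction.

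For the forward implication, I would assume $M$ is realizable over $\kk$, fix a realization $V$, and apply the ``only if'' half of Theorem~\ref{THM:realizvariety}: it yields $\ss\in\VV(I_M)\cap(\kk^*)^t$ with $S_{M[V]}=S_M(\ss)$, so in particular $\VV(I_M)\cap(\kk^*)^t$ is nonempty. For the converse, I would pick any $\ss\in\VV(I_M)\cap(\kk^*)^t$ and check that $S_M(\ss)$ is genuinely a slack matrix of a realization: because $\ss\in(\kk^*)^t$ we have $\supp(S_M(\ss))=\supp(S_M)$, and because $\ss\in\VV(I_M)$ the $(d+2)$-minors vanish so $\rk S_M(\ss)\le d+1$; then Lemma~\ref{LEM:trianglesubmat} forces $\rk S_M(\ss)=d+1$, and Theorem~\ref{THM:slackconditions} (equivalently, the ``if'' half of Theorem~\ref{THM:realizvariety}) produces a realization $V$ with $S_{M[V]}=S_M(\ss)$. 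Hence $M$ is realizable over $\kk$.

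Since all ingredients are already established, I do not expect a genuine obstacle. The only point requiring a moment of care is that the hypothesis lives in $(\kk^*)^t$ rather than merely in $\VV(I_M)$: this is exactly what pins the support of $S_M(\ss)$ to that of $S_M$, so that Lemma~\ref{LEM:trianglesubmat} applies and one cannot slip to a degenerate matrix of smaller support whose rank drops below $d+1$. With that caveat recorded, the corollary is immediate.
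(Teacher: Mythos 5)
Your proposal is correct and matches the paper's treatment: the paper presents this corollary as an immediate observation from Theorem~\ref{THM:realizvariety}, and your unwinding of both directions (including the check that $\ss\in(\kk^*)^t$ pins the support so that Lemma~\ref{LEM:trianglesubmat} and Theorem~\ref{THM:slackconditions} apply) is exactly the reasoning already contained in that theorem's proof. No gaps.
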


We now recast this into a test for realizability in terms of the slack ideal. 

\begin{prop} 
Let $M$ be an abstract matroid and $\kk$ be a field. If the slack ideal $I_M = \langle 1\rangle$ over $\kk$, then $M$ is not realizable over $\kk$. If $\kk$ is algebraically closed and $M$ is not realizable over $\kk$, then $I_M=\langle 1 \rangle$.
\label{PROP:nonreal}
\end{prop}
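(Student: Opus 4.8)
The plan is to deduce both implications directly from Corollary~\ref{COR:varietytest} together with standard facts from algebraic geometry (the weak Nullstellensatz) applied to a localized ring. The key observation is that the condition ``$\VV(I_M)\cap(\kk^*)^t\neq\emptyset$'' should be reformulated as a statement about an ideal in the Laurent polynomial ring $\kk[\xx^{\pm 1}]=\kk[\xx][(\prod_{i,H}x_{iH})^{-1}]$, or equivalently about the saturation that already appears in the definition of $I_M$. Since $I_M$ is by construction saturated with respect to the product of all the variables, its vanishing locus has no components inside the coordinate hyperplanes, so $\VV(I_M)=\overline{\VV(I_M)\cap(\kk^*)^t}$ (Zariski closure).

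For the first implication, suppose $I_M=\langle 1\rangle$ over $\kk$. Then $\VV(I_M)=\emptyset$ as a subset of $\kk^t$ (indeed of $\overline{\kk}^t$, but we only need it over $\kk$), hence a fortiori $\VV(I_M)\cap(\kk^*)^t=\emptyset$, and Corollary~\ref{COR:varietytest} gives that $M$ is not realizable over $\kk$. This direction requires no hypothesis on $\kk$.

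For the converse, assume $\kk$ is algebraically closed and $M$ is not realizable over $\kk$. By Corollary~\ref{COR:varietytest}, $\VV(I_M)\cap(\kk^*)^t=\emptyset$. I want to conclude $I_M=\langle 1\rangle$. Because $I_M$ is saturated with respect to $f:=\prod_{i\in E,\,H\in\HH}x_{iH}$, the variety $\VV(I_M)$ in $\kk^t$ contains no component lying entirely in $\VV(f)$; combined with $\VV(I_M)\cap(\kk^*)^t=\emptyset$ this forces $\VV(I_M)=\emptyset$ in $\kk^t=\overline{\kk}^t$. (Concretely: if $\ss\in\VV(I_M)$ lay in some coordinate hyperplane $x_{iH}=0$, it would lie on a positive-dimensional component all of whose points lie in that hyperplane — contradicting saturation — unless $\VV(I_M)$ is empty; one can also argue directly that $\VV(I_M:f^\infty)=\overline{\VV(I_M')\setminus\VV(f)}$ where $I_M'$ is the pre-saturation minor ideal, and $\VV(I_M')\setminus\VV(f)=\VV(I_M')\cap(\kk^*)^t\subseteq\VV(I_M)\cap(\kk^*)^t=\emptyset$, so the closure is empty.) Now $\VV(I_M)=\emptyset$ together with $\kk=\overline{\kk}$ and Hilbert's weak Nullstellensatz yields $I_M=\langle 1\rangle$.

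The main obstacle is the bookkeeping around saturation: one must be careful that ``$I_M=\langle 1\rangle$'' genuinely is equivalent to ``$\VV(I_M)=\emptyset$ over $\overline{\kk}$'' and that the saturation guarantees $\VV(I_M)\cap(\kk^*)^t=\emptyset \Rightarrow \VV(I_M)=\emptyset$, rather than merely $\VV(I_M)\subseteq\VV(f)$. This is exactly where the ``no component lies in a coordinate hyperplane'' remark from Definition~\ref{DEFN:symbslack} does the work, and where algebraic closure is essential (over a non-closed field $\VV(I_M)\cap(\kk^*)^t$ can be empty while $I_M\neq\langle 1\rangle$, since the minors may have only non-$\kk$-rational common zeros away from the coordinate hyperplanes). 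Everything else is a direct citation of Corollary~\ref{COR:varietytest} and the weak Nullstellensatz.
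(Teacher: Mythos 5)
Your proposal is correct and follows essentially the same route as the paper: the first implication is Theorem~\ref{THM:realizvariety} (equivalently Corollary~\ref{COR:varietytest}), and the second combines the Nullstellensatz with the fact that saturation by $\prod x_{iH}$ forces $\VV(I_M)=\overline{\VV(I_M)\cap(\kk^*)^t}$ over $\overline{\kk}$ --- the paper just states this contrapositively ($I_M\neq\langle 1\rangle$ implies realizable). Your first parenthetical justification of the saturation step (via a ``positive-dimensional component'') is muddled, but the alternative argument you give immediately after, using $\VV(I:f^\infty)=\overline{\VV(I')\setminus\VV(f)}$, is the correct one and is exactly what the paper's remark in Definition~\ref{DEFN:symbslack} is encoding.
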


\begin{proof} 
If $M$ is realizable over $\kk$, then there exists a slack matrix $S_M$ which is an element of the variety $\VV(I_M)$ by Theorem \ref{THM:realizvariety}. Then we cannot have $I_M= \langle1\rangle$. On the other hand, if $I_M \not = \langle1\rangle$, then $\VV(I_M)$ is not empty by the Nullstellensatz, and since $I_M$ is saturated with respect to the product of the variables, $\VV(I_M)$ is not contained entirely in the coordinate hyperplanes.
Therefore, by Theorem~\ref{THM:realizvariety} there is a slack matrix $S_M$, and the rows of $S_M$ give a realization of $M$ by Lemma \ref{LEM:rowrealiz}.
 \end{proof}

\begin{example}
\label{EX:fano}
 Consider the Fano plane $M_F$. It is a rank~3 matroid on 7 elements $E = \{0,1,2,3,4,5,6\}$, depicted in Figure~\ref{FIG:fano} with its symbolic slack matrix.  It is known that $M_F$ is only realizable in characteristic 2.
 Using \texttt{Macaulay2}~\cite{M2}, we find $I_{M_F} = \langle 1 \rangle$ over~$\mathbb{Q}$. So, the Fano plane is not realizable over $\mathbb{Q}$ by Proposition \ref{PROP:nonreal}.

Over $\mathbb{F}_2$, the slack ideal is generated by $126$ binomials of degrees 2,3, and 4. So, setting all variables to 1 in the slack matrix gives the realization of $M_F$ in $\mathbb{F}_2^7$.

\begin{figure}
\begin{center}
\begin{minipage}{0.3 \textwidth}
\includegraphics[height = 2 in]{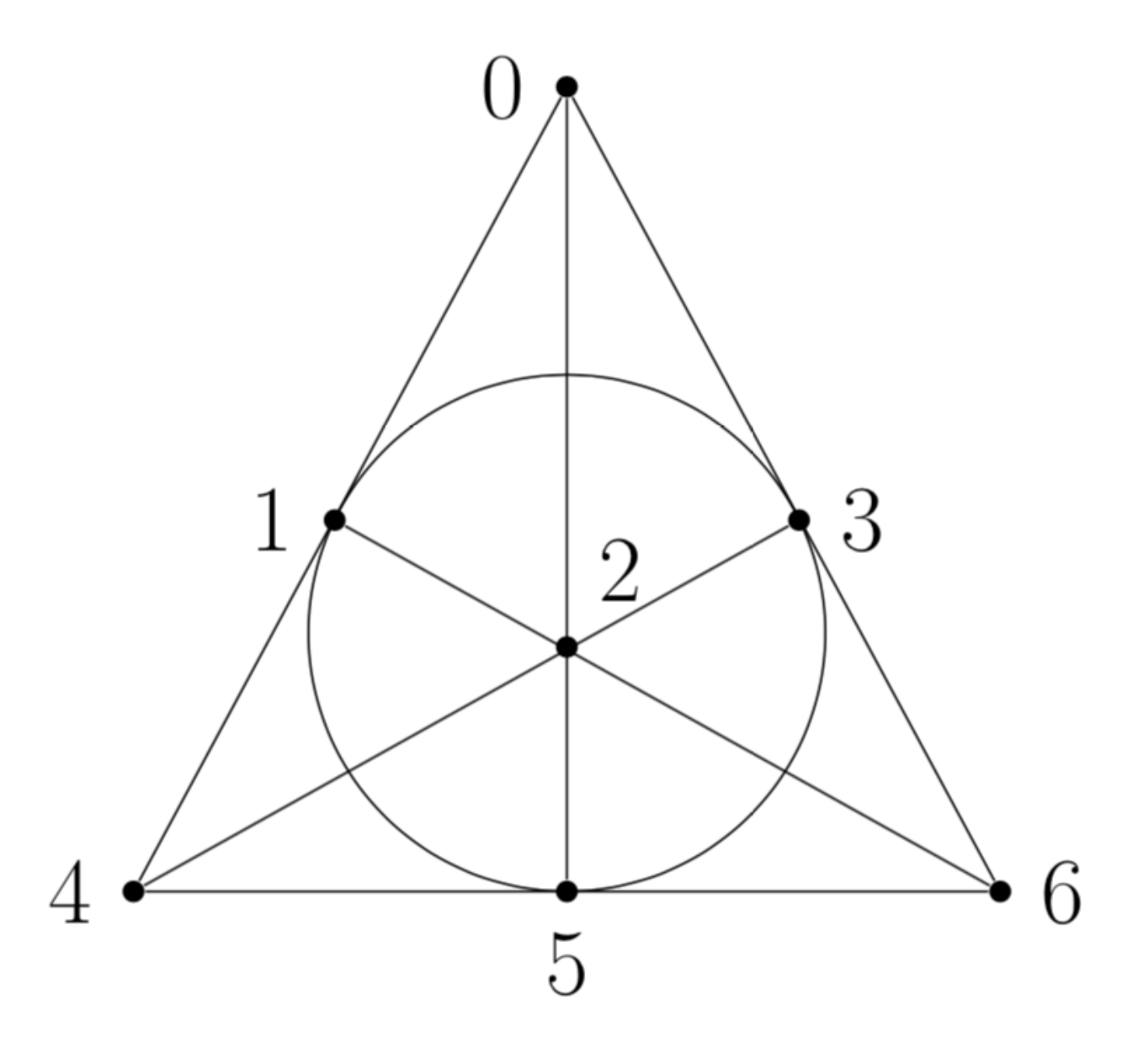}
\end{minipage}
\hspace{0.1 in }
\begin{minipage}{0.65 \textwidth}
\[
\begin{blockarray}{cccccccc}
	&H_1 	& H_2 		& H_3 		& H_4 	& H_5 	& H_6 	& H_7 \\
	&126		&014			&456			&025		&036		&234		&135\\
\begin{block}{c[ ccccccc]}
0&{x}_{01}	&0			&{x}_{03}		&0		&0		&x_{06}	&{x}_{07}\\
1&0			&0			&x_{13}		&{x}_{14}	&{x}_{15}	&x_{16}	&0\\
2&0			&x_{22}		&{x}_{23}		&0		&{x}_{25}	&0		&{x}_{27}\\
3&{x}_{31}	&x_{32}		&{x}_{33}		&{x}_{34}	&0		&0		&0\\
4&x_{41}		&0			&0			&{x}_{44}	&{x}_{45}	&0		&{x}_{47}\\
5&x_{51}		&x_{52}		&0			&0		&x_{55}	&x_{56}	&0\\
6&0			&{x}_{62}		&0			&x_{64}	&0		&x_{66}	&x_{67}\\
\end{block}
\end{blockarray}
 \]
\end{minipage}
\caption{The Fano plane with its non-bases drawn as lines and a circle, and its symbolic slack matrix.}
\label{FIG:fano}
\end{center}
\end{figure}

\end{example}




\begin{example}
Consider the \emph{complex matroid} $M_8$ from \cite[p. 33]{weird_bernd_book}. It is a rank~3 matroid on 8 elements with non-bases $124$, $235$, $346$, $457$, $568$, $167$, $278$, and $138$. It is depicted with its symbolic slack matrix in Figure \ref{FIG:complex_matroid}.

\begin{figure}
\begin{center}
\begin{minipage}{0.26\textwidth}
\includegraphics[height=1.6in]{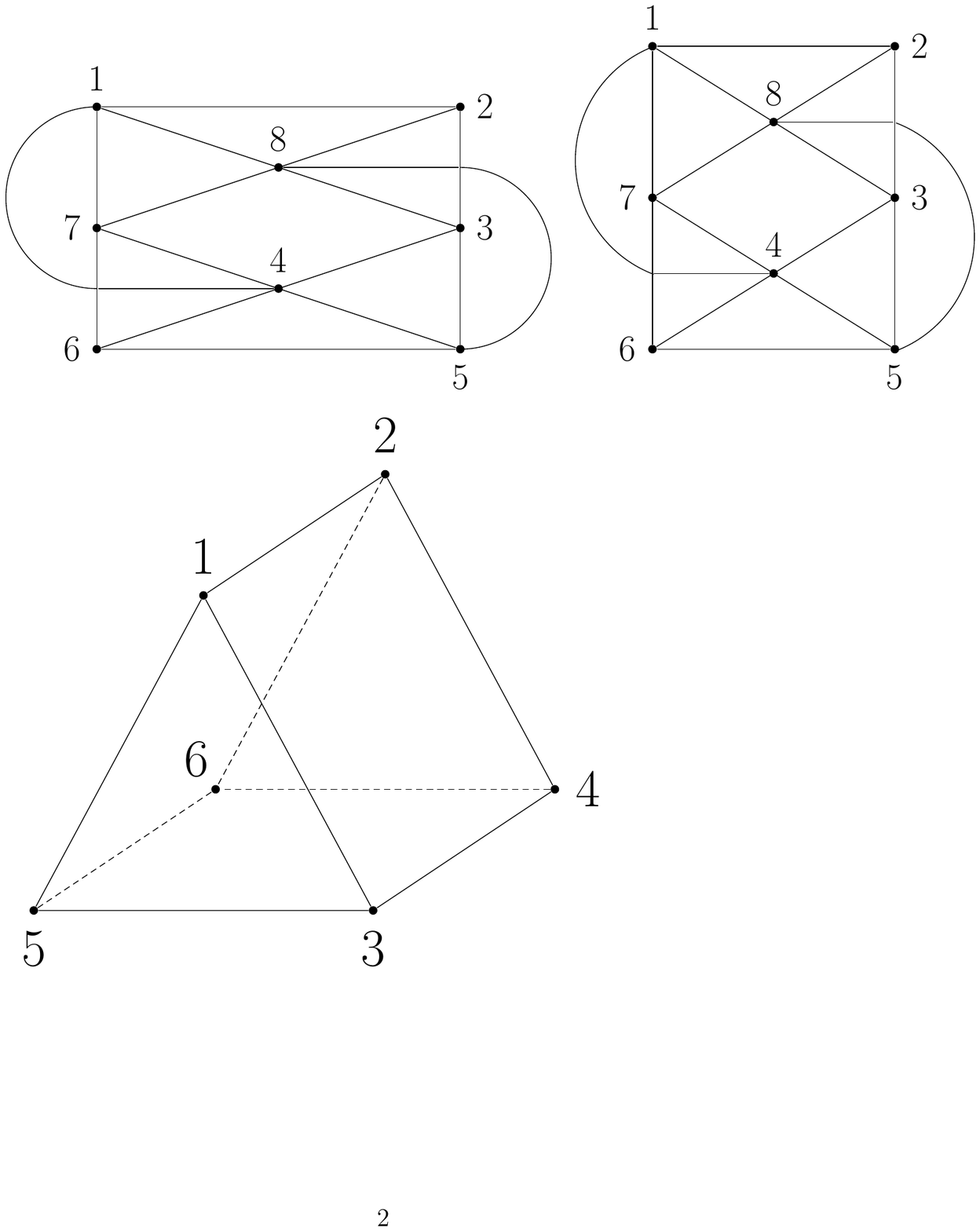}
\end{minipage}\;\;\begin{minipage}{0.69\textwidth}
{\fontsize{8}{10}\selectfont
\[
\begin{blockarray}{*{13}{@{\;\;}c@{\;\;}}}
	&H_1 	& H_2 	& H_3 	& H_4 	& H_5 	& H_6 	& H_7 	&H_8	&H_9 	&H_{10}	&H_{11}	&H_{12}	\\
	&167		&235		&568		&346		&124		&278		&138		&457		&48		&37		&26		&15\\
\begin{block}{c@{\;\;}[*{12}{@{}c@{}}]}
1	&0		&x_{12}	&x_{13}	&x_{14}	&0		&x_{16}	&0		&x_{18}	&x_{19}	&x_{1,10}	&x_{1,11}	&0		\\
2	&x_{21}	&0		&x_{23}	&x_{24}	&0		&0		&x_{27}	&x_{28}	&x_{29}	&x_{2,10}	&0		&x_{2,12}	\\
3	&x_{31}	&0		&x_{33}	&0		&x_{35}	&x_{36}	&0		&x_{38}	&x_{39}	&0		&x_{3,11}	&x_{3,12}	\\
4	&x_{41}	&x_{42}	&x_{44}	&0		&0		&x_{46}	&x_{47}	&0		&0		&x_{4,10}	&x_{4,11}	&x_{4,12}	\\
5	&x_{51}	&0		&0		&x_{54}	&x_{55}	&x_{56}	&x_{57}	&0		&x_{59}	&x_{5,10}	&x_{5,11}	&0		\\
6	&0		&x_{62}	&0		&0		&x_{65}	&x_{66}	&x_{67}	&x_{68}	&x_{69}	&x_{6,10}	&0		&x_{6,12}	\\
7	&0		&x_{72}	&x_{73}	&x_{74}	&x_{75}	&0		&x_{77}	&0		&x_{79}	&0		&x_{7,11}	&x_{7,12}	\\
8	&x_{81}	&x_{82}	&0		&x_{84}	&x_{85}	&0		&0		&x_{88}	&0		&x_{8,10}	&x_{8,11}	&x_{8,12}	\\
\end{block}
\end{blockarray}.
 \]}
\end{minipage}
\caption{The complex matroid $M_8$ and its symbolic slack matrix $S_{M_8}(\xx)$.}
\label{FIG:complex_matroid}
\end{center}
\end{figure}

To simplify the computation, we use Corollary~\ref{COR:scaleclosed} and note that we can select a representative of each projective equivalence class by fixing certain variables in the slack matrix to be 1 (see \S\ref{SSEC:scaledslack} for more details). Fixing the variables $x_{14}$, $x_{27}$, $x_{28}$, $x_{3,12}$, $x_{41}$, $x_{46}$, $x_{47}$, $x_{4,10}$, $x_{57}$, $x_{67}$, $x_{72}$, $x_{73}$, $x_{74}$, $x_{75}$, $x_{77}$, $x_{79}$, $x_{7,11}$, $x_{7,12}$, $x_{84}$ to 1 and computing the slack ideal $I_{M_8}$ in \texttt{Macaulay2} \cite{M2}, we find that it is not the unit ideal. However, it contains the polynomial $x_{8,12}^2+ x_{8,12}+1$. Since this polynomial has only the complex roots $\frac{-1\pm i\sqrt{3}}{2}$, we get by Corollary~\ref{COR:varietytest} that $M_8$ is not realizable over $\RR$, but it is realizable over $\CC$ by Proposition~\ref{PROP:nonreal}.
\end{example}

\subsection{Final Polynomials}

The method of final polynomials introduced in \cite[\S4.2]{weird_bernd_book} certifies when a matroid has no realization. We  define an analogous polynomial for the slack ideal, and show how it can be used to improve computational efficiency of checking non-realizability. 

\begin{definition} \label{DEFN:final} Let $M$ be any matroid of rank $d+1$. Let $\mathcal{S}$ be the multiplicatively closed set generated by taking finite products of the variables $\xx$ in the symbolic slack matrix. A polynomial $f \in \kk[\xx]$ is a \emph{slack final polynomial} if
$$
f \in I_{d+2}(S_M(\xx)) \cap \left(\mathcal{S} + I_M  \right)
$$
where $I_{d+2}(S_M(\xx))$ is the ideal of $(d+2)$-minors of the symbolic slack matrix of $M$. 
\end{definition}

We now have the following result, which demonstrates that the existence of slack final polynomials gives a certificate for non-realizability.

\begin{prop} Let $M$ be a matroid of rank $d+1$. The following are equivalent.
\begin{enumerate}[label = (\roman{enumi})]
\item $1 \in I_M \subseteq \kk[\xx]$,
\item There is a monomial $m \in \kk[\xx]$ such that $m \in I_{d+2}(S_M(\xx))$,
\item A slack final polynomial $f \in  I_{d+2}(S_M(\xx)) \cap \left(\mathcal{S} + I_M  \right)$ exists for $M$.
\end{enumerate}
\label{PROP:finaleq}
\end{prop}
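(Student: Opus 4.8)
The plan is to prove the cycle of implications (i) $\Rightarrow$ (ii) $\Rightarrow$ (iii) $\Rightarrow$ (i). Throughout, abbreviate $J:=I_{d+2}(S_M(\xx))$ for the ideal of $(d+2)$-minors and $p:=\prod_{i\in[n]}\prod_{j:\,i\notin H_j}x_{ij}$ for the product of all variables of the symbolic slack matrix, so that by Definition~\ref{DEFN:symbslack} we have $I_M=J:p^\infty$; note also that every element of the multiplicatively closed set $\mathcal{S}$ is a monomial in the $\xx$ and hence divides some power of $p$. The one fact worth isolating as a preliminary claim is that \emph{$I_M$ contains a monomial in the variables $\xx$ if and only if $1\in I_M$}. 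Only the forward direction has content: if a monomial $m\in I_M$, then $m\mid p^k$ for $k$ large, so $p^k\in I_M$ since $I_M$ is an ideal, and because saturation is idempotent, $1\in I_M:p^\infty=(J:p^\infty):p^\infty=J:p^\infty=I_M$.

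Granting the claim, the three implications are each one line. For (i) $\Rightarrow$ (ii): if $1\in I_M=J:p^\infty$, then $p^N\in J$ for some $N\ge 0$, and $p^N$ is a monomial lying in $I_{d+2}(S_M(\xx))$. For (ii) $\Rightarrow$ (iii): a monomial $m\in I_{d+2}(S_M(\xx))$ also lies in $\mathcal{S}\subseteq\mathcal{S}+I_M$ (write $m=m+0$), so $m\in I_{d+2}(S_M(\xx))\cap(\mathcal{S}+I_M)$ is itself a slack final polynomial. For (iii) $\Rightarrow$ (i): given a slack final polynomial $f\in I_{d+2}(S_M(\xx))\cap(\mathcal{S}+I_M)$, write $f=s+g$ with $s\in\mathcal{S}$ and $g\in I_M$; since $I_{d+2}(S_M(\xx))\subseteq I_M$ we get $f\in I_M$, hence $s=f-g\in I_M$, which is a monomial in $I_M$, and the claim gives $1\in I_M$.

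The only delicate point is bookkeeping around the saturation: one must be sure the element $p$ saturating $I_M$ is exactly the product of the variables generating $\mathcal{S}$, so that ``$m\in\mathcal{S}$'' and ``$m$ divides a power of $p$'' are the same statement, use idempotence of saturation to collapse the iterated colon ideal, and use the containment $I_{d+2}(S_M(\xx))\subseteq I_M$ to pull $s=f-g$ back into $I_M$. Unlike Lemma~\ref{LEM:pluckerproj}, no Gr\"obner-basis input is needed, and no hypothesis on $\kk$ is required --- the argument is purely formal ideal arithmetic. I would end with the remark that the cycle shows any matroid admitting a slack final polynomial admits one that is a single monomial, which is what makes the test efficient to run in practice.
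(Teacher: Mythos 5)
Your proof is correct and uses essentially the same ingredients as the paper's: that $I_M$ is the saturation of $I_{d+2}(S_M(\xx))$ by the product of the variables, that a monomial in a $p$-saturated ideal forces $1$ into it, and that a monomial in $I_{d+2}(S_M(\xx))$ is itself a slack final polynomial. The only differences are cosmetic — you run the cycle as (i)$\Rightarrow$(ii)$\Rightarrow$(iii)$\Rightarrow$(i) rather than the paper's (i)$\Rightarrow$(iii)$\Rightarrow$(ii)$\Rightarrow$(i), and your (iii)$\Rightarrow$(i) uses the containment $I_{d+2}(S_M(\xx))\subseteq I_M$ directly where the paper clears denominators by multiplying by a monomial $n$ with $ng\in I_{d+2}(S_M(\xx))$.
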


\begin{remark} Over an algebraically closed field, these conditions are equivalent to the matroid being non-realizable by Proposition \ref{PROP:nonreal}. When $\kk$ is not algebraically closed, these conditions imply non-realizability, but if a matroid is not realizable there may not be a slack final polynomial. 
\end{remark}

\begin{example} Recall that the complex matroid $M_8$ has a complex realization, as $1\notin I_{M_8} \subseteq \QQ[\xx]$. However, by the above proposition, this means that even though $M_8$ is not realizable over $\QQ$, it does not have a slack final polynomial. 
\end{example}

\begin{proof} \hfill

\begin{enumerate}
\item[(i) $\Rightarrow$ (iii)] Suppose $1 \in I_M$. Since $I_M$ is the saturation of $I_{d+2}(S_M(\xx))$, this implies that there exists a monomial $m \in \kk[\xx]$ such that $m \cdot 1 \in I_{d+2}(S_M(\xx))$. Then, we observe the $m$ is already a slack final polynomial for $M$.

\item[(ii) $\Rightarrow$ (i)] If there is a monomial $m \in \kk[\xx]$ such that $m \in I_{d+2}(S_M(\xx))$, then after saturation we find $1 \in I_M$.

\item[(iii) $\Rightarrow$ (ii)] Suppose $f$ is a slack final polynomial for $M$. Since $f \in (\mathcal{S} + I_M)$, there exists a monomial $m$ and a $g \in I_M$ with $f = m + g$. Since $g \in I_M$, there exists a monomial $n$ such that $ng \in I_{d+2}(S_M(\xx))$, so $nm = nf-ng \in I_{d+2}(S_M(\xx))$ is a monomial in $I_{d+2}(S_M(\xx))$.
\end{enumerate} \vspace{-20pt}
\end{proof}

\begin{remark} In practice saturation of the ideal $I_{d+2}(S_M(\xx))$ can be quite slow, which often makes testing realizability via checking $1\in I_M$ infeasible. Thus the real power of Proposition~\ref{PROP:finaleq} is that one often finds relatively small monomials which are already contained in $I_{d+2}(S_M(\xx))$. So, if one simply wants to certify non-realizability, a faster method is to compute $I_{d+2}(S_M(\xx))$ and check, for example, if $\prod \xx \in I_{d+2}(S_M(\xx))$. In the following example we exhibit how this method can be useful for certifying non-realizability.
\end{remark}

\begin{example} Consider the Fano matroid $M_F$ of Example \ref{EX:fano}. If we compute $I_{4}(S_{M_F}(\xx))$, then we can verify that the product of all of the variables is contained in this ideal. In fact, even the monomial 
$
x_{07}x_{16}x_{25}x_{33}x_{41}x_{52}x_{64}
$
is contained in $I_{4}(S_{M_F}(\xx))$. Verifying this containment (using the laptop of one of the authors) in \texttt{Macaulay2} took 0.000067 seconds, while testing $1\in I_M$ took 3.40765 seconds, indicating a speed up by a factor of 50,000. 
 \end{example}
 
\begin{example} Consider the V\'amos matroid pictured in Figure~\ref{FIG:vamos}. It is a rank~4 matroid $M_v$ on 8 elements whose non-bases are given by the sets $1234$, $1256$, $3456$, $3478$, and $5678$. It is one of the smallest matroids known to be non-realizable over every field. However, the V\'amos matroid has 41 hyperplanes, so that its slack matrix is an $8\times 41$ matrix containing 200 distinct variables. Even computing the full set of minors of this matrix is computationally impractical. 

We note though, that it always suffices to show that Proposition~\ref{PROP:finaleq} (ii) holds for some subideal of the ideal of $(d+2)$-minors. In particular, we can look at the minors of a submatrix of $S_{M_v}(\xx)$. Consider the submatrix of the V\'amos symbolic slack matrix in Figure \ref{FIG:vamos}. 
One can easily check with \texttt{Macaulay2} that the monomial given by the product of all the variables in this submatrix is already in the minor ideal of this submatrix (over $\QQ$ and various finite fields), making $M_v$ non-realizable over these fields by Propositions~\ref{PROP:finaleq} and \ref{PROP:nonreal}. 

\begin{figure}
\begin{center}
\begin{minipage}{0.3 \textwidth}
\includegraphics[height=2.5in]{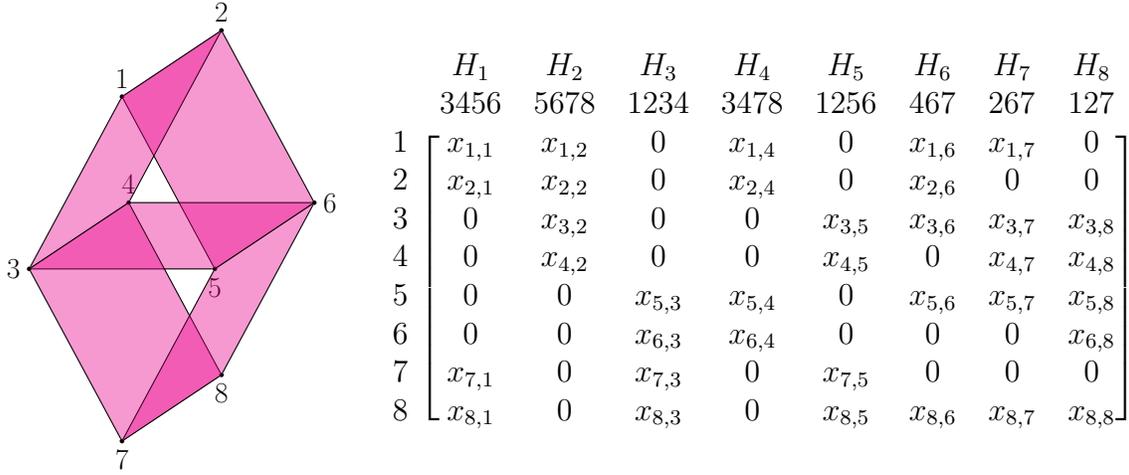}
\end{minipage}
\hspace{0.1 in}
\begin{minipage}{0.65 \textwidth}
\[
\begin{blockarray}{ccccccccc}
	&H_1 	& H_2 	& H_3 	& H_4 	& H_5 	& H_6 	& H_7 	&H_8 \\
	&3456	&567	8	&1234	&3478	&1256	&467		&267		&127 \\
\begin{block}{c[ cccccccc]}
1&	x_{1,1} 	& x_{1,2} 	& 0 		& x_{1,4} 	& 0 		& x_{1,6} 	& x_{1,7}	& 0 \\
2&	x_{2,1} 	& x_{2,2} 	& 0 		& x_{2,4} 	& 0 		& x_{2,6} 	& 0 		& 0 \\
3&	0 		& x_{3,2} 	& 0 		& 0 		& x_{3,5} 	& x_{3,6} 	& x_{3,7} 	& x_{3,8} \\
4&	0 		& x_{4,2} 	& 0 		& 0 		& x_{4,5} 	& 0 		& x_{4,7} 	& x_{4,8} \\
5&	0 		& 0 		& x_{5,3} 	& x_{5,4} 	& 0 		& x_{5,6} 	& x_{5,7} 	& x_{5,8} \\
6&	0 		& 0 		& x_{6,3} 	& x_{6,4} 	& 0 		& 0 		& 0 		& x_{6,8} \\
7&	x_{7,1} 	& 0 		& x_{7,3} 	& 0 		& x_{7,5} 	& 0 		& 0 		& 0 \\
8&	x_{8,1} 	& 0 		& x_{8,3} 	& 0 		& x_{8,5} 	& x_{8,6} 	& x_{8,7} 	& x_{8,8} \\
	\end{block}
	\end{blockarray}
 \]
\end{minipage}
\caption{The V\'amos matroid $M_v$ with non-bases pictured as planes, and a submatrix its slack matrix.}
\label{FIG:vamos}
\end{center}
\end{figure}

\end{example}

\section{Projective uniqueness of matroids}
\label{SEC:toric}


The simplest slack realization spaces are those belonging to projectively unique matroids. In this case, we know that there is a single realization up to projective transformations; in other words, $\VV(I_M)$ is the toric variety which is the closure of the orbit of some realization under the action of $T_{n,h}$. This implies $\sqrt{I_M} = \mathcal{I}(\VV(I_M))$ is a toric ideal; however, universality suggests that $I_M$ need not be radical. A natural question which arises is whether projectively unique matroids correspond exactly to matroids with toric slack ideals. To study this question we introduce an intermediate toric ideal associated to a matroid. 

\begin{definition}
\label{DEFN:graph}
Define the {\em non-incidence graph of matroid~$M$} as the bipartite graph~$G_M$ with one node for each element of the ground set of $M$, one node for each hyperplane, and an edge between element $i$ and hyperplane $H_j$ if and only if $i\notin H_j$. Notice that $G_M$ records the support of the slack matrix $S_M$, and so we can think of its edges as being labelled by the corresponding entry of $S_M(\xx)$. (See Figure~\ref{FIG:fourlinesgraph} for an example of the graph $G_{M_4}$ for the matroid $M_4$ of Example~\ref{EG:four_lines}, and Figure~\ref{FIG:nonfano_fig} for the non-incidence graph of the non-Fano matroid.)
\end{definition}


Let $\mathcal{A}_G$ be the set of vectors forming the columns of the vertex-edge incidence matrix of the graph $G$, and let $T_G$ be the toric ideal of the vector configuration $\mathcal{A}_G$. The toric ideal of a bipartite graph is a well-studied object \cite{OH99, villarreal}. If a matroid $M$ has a slack matrix $S_M$ which is a 0-1 matrix, then the toric ideal $T_{G_M}$ associated to the graph $G_M$ is the ideal of the orbit of $S_M$ under the action of the torus $T_{n,h}$. So, $T_{G_M}$ describes one projective equivalence class of slack matrices of $M$. We now define an analogous toric ideal for any projective equivalence class. 

\begin{definition} \label{DEFN:cycleideal}
Let $M$ be an abstract matroid with realization $V$. Let $\ss \in (\kk^*)^t$ be such that $S_{M[V]} = S_M(\ss)$, where $t$ is the number of variables in $S_M(\xx)$, the symbolic slack matrix of $M$. We define the \emph{cycle ideal} $C_V$ of $M[V]$ to be the ideal
\begin{equation}
C_V = \left\langle \xx^{c+} - \alpha_c \xx^{c-}  : \text{$c$ is a cycle in $G_M$ and } \alpha_c = \frac{\ss^{c+}}{\ss^{c-}} \right\rangle \subseteq \kk[\xx]
\label{EQ:cycleideal}\end{equation}
where $c+$ and $c-$ are alternating edges from the cycle $c$.
\end{definition}

\begin{theorem} Let $M$ be a matroid of rank $d+1$ on $n$ elements with $h$~hyperplanes. Let $V$ be a realization of $M$ with slack matrix $S_{M[V]} = [s_{i,j}]_{i=1,j=1}^{n,h}$. Then the ideal $C_V$ is the (scaled) toric ideal which is the kernel of the $\kk$-algebra homomorphism 
$
\phi: \kk[\xx] \to \kk[\rr,\tt, \rr^{-1},\tt^{-1}],$
which sends 
$
x_{ij}  \mapsto s_{i,j}r_it_j.
$
\label{THM:cycleistoric}
\end{theorem}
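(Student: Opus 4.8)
The plan is to show the two inclusions $C_V \subseteq \ker\phi$ and $\ker\phi \subseteq C_V$ separately, using the standard structure theory of toric ideals of vector configurations (every toric ideal is generated by the binomials $\xx^{\uu_+} - \xx^{\uu_-}$ where $\uu_+ - \uu_- = \uu$ ranges over the integer kernel/lattice of $\mathcal{A}_{G_M}$, here twisted by the scalars $s_{i,j}$). Throughout I identify a variable $x_{ij}$ with the edge $(i,H_j)$ of $G_M$, so that a monomial $\xx^{\uu}$ corresponds to a nonnegative integer edge-vector.

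First I would handle $C_V \subseteq \ker\phi$. Take a cycle $c$ in $G_M$ with alternating edge sets $c_+, c_-$. Because $G_M$ is bipartite, a cycle alternates between "element-side" and "hyperplane-side" vertices, and each vertex of the cycle is incident to exactly one edge of $c_+$ and one of $c_-$. Hence applying $\phi$ to $\xx^{c_+}$ and to $\alpha_c \xx^{c_-}$: the $r_i$ and $t_j$ factors collected from $c_+$ exactly match those collected from $c_-$ (each $r_i$ appears once on each side, each $t_j$ appears once on each side), so they cancel in the difference; what remains is $\ss^{c_+} - \alpha_c\,\ss^{c_-} = \ss^{c_+} - \frac{\ss^{c_+}}{\ss^{c_-}}\ss^{c_-} = 0$. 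So every generator of $C_V$ lies in $\ker\phi$.

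Next, $\ker\phi \subseteq C_V$. Here I would argue that $\ker\phi$ is a (scaled) toric ideal and is therefore generated by binomials $\xx^{\uu_+} - \gamma\,\xx^{\uu_-}$ where $\uu = \uu_+ - \uu_-$ lies in the integer kernel of the incidence matrix $\mathcal{A}_{G_M}$ and $\gamma = \ss^{\uu_+}/\ss^{\uu_-}$ — this is the standard description of the kernel of a monomial map twisted by nonzero scalars (one can reduce to the untwisted case by the change of variables $x_{ij} \mapsto s_{i,j}^{-1} x_{ij}$, which carries $\ker\phi$ to the ordinary toric ideal $T_{G_M}$ and carries $C_V$ to the cycle ideal with all $\alpha_c = 1$). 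Then the key combinatorial input is that the integer kernel (the lattice of relations) of the vertex-edge incidence matrix of a graph is spanned by the signed characteristic vectors of its cycles — for a bipartite graph these are exactly the vectors $\chi_{c_+} - \chi_{c_-}$. So any binomial in $\ker\phi$ is, up to the scalar bookkeeping, a $\ZZ$-combination of cycle relations, and a routine telescoping/induction argument (the usual "binomials of a toric ideal are generated by the lattice basis" reduction, e.g. as in Sturmfels' book, tracking the scalars multiplicatively) shows it lies in the ideal generated by the cycle binomials, i.e. in $C_V$.

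The main obstacle is the second inclusion, specifically making the scalar bookkeeping precise: one must check that the twisting by $\ss$ is multiplicative in exactly the way needed so that a $\ZZ$-linear combination of cycle vectors produces a product/quotient of the corresponding $\alpha_c$'s that matches $\gamma = \ss^{\uu_+}/\ss^{\uu_-}$, and that when a cycle vector is used "with negative multiplicity" the binomial $\xx^{c_+} - \alpha_c \xx^{c_-}$ and its partner $\alpha_c^{-1}\xx^{c_+} - \xx^{c_-}$ generate the same ideal (true since $\alpha_c \in \kk^*$). The cleanest route is to perform the change of variables $x_{ij}\mapsto s_{i,j}^{-1}x_{ij}$ up front, reducing everything to the classical statement that $T_{G_M}$ equals the cycle ideal of the bipartite graph $G_M$ with all coefficients $1$, which is well known \cite{villarreal, OH99}; then the scalars never need to be tracked through the induction at all, and the theorem follows.
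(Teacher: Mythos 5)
Your proof is correct. The paper in fact states this theorem without giving a proof, implicitly relying on the standard result (from the references it cites for toric ideals of bipartite graphs, Ohsugi--Hibi and Villarreal) that the toric ideal of a bipartite graph is generated by the binomials of its cycles; your argument --- the direct check that each cycle binomial lies in $\ker\phi$ because the $r_i$ and $t_j$ factors from the alternating edge sets coincide, plus the change of variables $x_{ij}\mapsto s_{i,j}^{-1}x_{ij}$ reducing the reverse inclusion to that classical bipartite-graph statement --- is exactly the intended route, and you correctly flag (and avoid) the pitfall that a mere lattice basis of cycle vectors would not suffice, whereas the full set of cycles does.
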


%

We note that the cycle ideal of a realization provides a way to distinguish projective equivalence classes of realizations of $M$, as well as detect projective uniqueness. 

\begin{lemma}
\label{LEM:cycleproj}
Let $M$ be a matroid with realizations $U$ and $V$. Then $U,V$ are projectively equivalent if and only if $C_V = C_U$. \end{lemma}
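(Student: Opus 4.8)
The plan is to reduce the statement to Lemma~\ref{LEM:pe}, which says that $U$ and $V$ are projectively equivalent if and only if $S_{M[U]}$ and $S_{M[V]}$ differ by row and column scaling, i.e.\ $S_{M[U]} = D_n S_{M[V]} D_h$ for invertible diagonal matrices. So I will show that $C_U = C_V$ is equivalent to this scaling condition. Write $S_{M[V]} = S_M(\ss)$ and $S_{M[U]} = S_M(\uu)$ for $\ss, \uu \in (\kk^*)^t$, so that the question becomes: $C_U = C_V$ if and only if there exist $r_1,\dots,r_n, t_1,\dots,t_h \in \kk^*$ with $u_{ij} = s_{ij}\, r_i t_j$ for every edge $(i,j)$ of $G_M$.

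First I would prove the easy direction. Suppose $\uu = (s_{ij} r_i t_j)_{ij}$ arises from such a diagonal scaling. For any cycle $c$ in $G_M$, the alternating edge sets $c+$ and $c-$ visit each vertex the same number of times in $c+$ as in $c-$; hence the monomials $\rr^{c+}\tt^{c+}$ and $\rr^{c-}\tt^{c-}$ in the $r_i,t_j$ coincide, and therefore $\uu^{c+}/\uu^{c-} = \ss^{c+}/\ss^{c-}$. Thus $\alpha_c$ is the same whether computed from $\ss$ or from $\uu$, so the defining generators of $C_V$ and $C_U$ in \eqref{EQ:cycleideal} literally agree, giving $C_V = C_U$.

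For the converse, suppose $C_U = C_V$. By Theorem~\ref{THM:cycleistoric}, $C_V$ is the kernel of $\phi_{\ss}: x_{ij} \mapsto s_{ij} r_i t_j$ and $C_U$ is the kernel of $\phi_{\uu}: x_{ij}\mapsto u_{ij} r_i t_j$; both are prime binomial ideals with the same underlying monomial support (the incidence vectors $\mathcal{A}_{G_M}$), and they are equal as ideals. The key step is: two such scaled toric ideals coincide exactly when the scaling vectors $\ss$ and $\uu$ differ by an element of the torus acting through $\mathcal{A}_{G_M}$, i.e.\ by a row-and-column rescaling. One clean way to see this: equality of $C_U$ and $C_V$ forces $\alpha_c = \ss^{c+}/\ss^{c-} = \uu^{c+}/\uu^{c-}$ for every cycle $c$, hence $(\uu/\ss)^{c+} = (\uu/\ss)^{c-}$ for every cycle, where $\uu/\ss$ denotes the coordinatewise ratio. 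Equivalently, the vector $\bigl(\log(u_{ij}/s_{ij})\bigr)_{ij}$ (working formally, or just multiplicatively over $\kk^*$) is orthogonal to the cycle space of $G_M$, hence lies in the row space of the incidence matrix — that is, there are $r_i, t_j \in \kk^*$ with $u_{ij}/s_{ij} = r_i t_j$ for all edges $(i,j)$. This is exactly the statement that $S_{M[U]} = D_n S_{M[V]} D_h$, so Lemma~\ref{LEM:pe} finishes the argument.

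The main obstacle is the cycle-space/cut-space duality step in the converse: one must argue carefully that ``$u_{ij}/s_{ij}$ is a coboundary $r_i t_j$'' follows from ``$u/s$ is constant along every cycle'' when the base graph $G_M$ may be disconnected. Connectedness of $G_M$ is not automatic, but on each connected component the argument is the standard fact that a $\kk^*$-valued edge labelling with trivial product around every cycle is a coboundary (fix $r$ at a root vertex, propagate along a spanning tree); across components the scalars can be chosen independently. One should also double-check the bookkeeping that the two candidate generating sets really do generate the same ideal — but since both $C_U$ and $C_V$ are the full toric (cycle) ideals of the same graph by Theorem~\ref{THM:cycleistoric}, it suffices to match them generator-by-generator on a generating set of cycles, which is what the $\alpha_c$ computation does.
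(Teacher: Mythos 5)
Your proof is correct and follows essentially the same route as the paper's: the forward direction is the same cancellation of the $r_it_j$ factors over alternating edges of a cycle, and your converse (showing the ratio $u_{ij}/s_{ij}$ is a coboundary by propagating along a spanning forest) is the same spanning-forest argument the paper runs via Lemma~\ref{LEM:scaleforest}, just phrased as cycle-space/cut-space duality. Both reduce to Lemma~\ref{LEM:pe}, so no further changes are needed.
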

\begin{proof}
First suppose that $U$ and $V$ are projectively equivalent. 
Let $S_{M[V]}$ have entries $s_{i,j}$ for elements $i$ of the ground set of $M$ and hyperplanes $H_j$ of $M$. By Lemma~\ref{LEM:pe} we know that  $S_{M[U]}$ is obtained from $S_{M[V]}$ by scaling the rows by $r_1, \ldots, r_n\in\kk^*$ and scaling the columns by $t_1, \ldots, t_h\in\kk^*$. Then the entries of $S_{M[U]}$ are $r_it_js_{i,j}$. Since $S_{M[U]}$ and $S_{M[V]}$ have the same support, they have the same cycles. One may then show via elementary calculation that the coefficients $\alpha_c$ are the same when calculated from $S_{M[U]}$ and from $S_{M[V]}$ for every cycle $c \in G_M$.


Conversely, suppose $C_V = C_U$. Then $\VV(C_V)=\VV(C_U)$, and in particular, $S_{M[V]} = S_M(\ss)$ and $S_{M[U]} = S_M(\uu)$ for $\ss,\uu\in\VV(C_V)\cap(\kk^*)^t$. We now argue that $S_{M[U]}$ can be row and column scaled to be equal to $S_{M[V]}$, which proves the results by Lemma~\ref{LEM:pe}. 
Fix a spanning forest $T$ of $G_M$. By Lemma \ref{LEM:scaleforest} we may scale the entries in $S_M(\uu)$ corresponding to edges in $T$ to be equal to the corresponding entries of $S_{M[V]}$. 
Any remaining entry $a$ of $S_{M}(\uu)$ will correspond to an edge $e$ in $G_M$ such that $T \cup \{e\}$ contains a unique cycle $c$, where $e \in c$. The equation $\xx^{c+}-\alpha_c\xx^{c-}\in C_V$ corresponding to this cycle must be satisfied by the scaling of $S_M(\uu)$, since $C_V=C_U$. Since all variables in the cycle except the one labelled by edge $e$ have been fixed, we find that there is only one possible value for $a$. Furthermore, this value equals the corresponding entry in $S_{M[V]}$, since $S_{M[V]}$ satisfies the equations of $C_V$ by definition.  \end{proof}

\begin{corollary} Let $M$ be a matroid with realization $V$. Then, $\VV(C_V) \cap (\kk^*)^t$ consists of exactly the slack matrices of realizations projectively equivalent to $V$.
\label{COR:varofc}
\end{corollary}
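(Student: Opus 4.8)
The plan is to derive Corollary~\ref{COR:varofc} as a quick consequence of Lemma~\ref{LEM:cycleproj}, using the fact that the cycle ideal $C_V$ depends only on the projective equivalence class of $V$ together with the characterization of its variety. First I would observe that any $\ss \in \VV(C_V)\cap(\kk^*)^t$ is by construction the nonzero-entry vector of a matrix $S_M(\ss)$ with the same support as $S_M$; since it also satisfies all the cycle equations, $S_M(\ss)$ can be row-and-column scaled along a spanning forest of $G_M$ to agree with $S_{M[V]}$ (this is precisely the scaling argument appearing in the converse direction of the proof of Lemma~\ref{LEM:cycleproj}, invoking Lemma~\ref{LEM:scaleforest}). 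Hence $S_M(\ss)$ differs from $S_{M[V]}$ by a row/column scaling, so by Lemma~\ref{LEM:pe} it is the slack matrix of a realization $U$ projectively equivalent to $V$.

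Conversely, if $U$ is any realization projectively equivalent to $V$, then by Lemma~\ref{LEM:cycleproj} we have $C_U = C_V$, and by Theorem~\ref{THM:realizvariety} the slack matrix $S_{M[U]} = S_M(\uu)$ satisfies $\uu \in \VV(I_M)\cap(\kk^*)^t$. But $\uu$ lies in $\VV(C_U) = \VV(C_V)$ by the definition of $C_U$ (the entries of $S_{M[U]}$ satisfy the cycle binomials of $C_U$ tautologically, which is how $\alpha_c$ is defined), so $\uu \in \VV(C_V)\cap(\kk^*)^t$. Combining the two directions yields the claimed equality of sets.

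I expect the only point requiring care is making the first (``$\subseteq$'') direction fully rigorous: one must check that the spanning-forest scaling used in Lemma~\ref{LEM:cycleproj} does indeed produce a \emph{matrix of the same matroid} — i.e., that scaling rows and columns of $S_M(\ss)$ to match $S_{M[V]}$ on a spanning forest and then matching on the remaining edges via the cycle equations is consistent — but this is exactly what Lemma~\ref{LEM:scaleforest} and the cycle-equation argument already guarantee, so no new work is needed. The remaining steps (that membership in $(\kk^*)^t$ is preserved, that the resulting matrix has full support, and the appeal to Lemma~\ref{LEM:pe}) are routine. In short, the corollary is essentially a restatement of Lemma~\ref{LEM:cycleproj} phrased at the level of varieties rather than ideals, and the proof is a two-paragraph combination of that lemma with Theorem~\ref{THM:realizvariety} and Lemma~\ref{LEM:pe}.
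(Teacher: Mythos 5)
Your argument is correct and matches the paper's intended derivation: the corollary is exactly the content of the two directions of the proof of Lemma~\ref{LEM:cycleproj} (spanning-forest scaling plus the cycle equations for the inclusion $\VV(C_V)\cap(\kk^*)^t\subseteq\{\text{slack matrices of realizations p.e.\ to }V\}$, and the tautological vanishing of the binomials $\xx^{c+}-\alpha_c\xx^{c-}$ on $\uu$ for the reverse inclusion). The only cosmetic point is that the step converting ``$S_M(\ss)$ is a row/column scaling of $S_{M[V]}$'' into ``$S_M(\ss)$ is the slack matrix of a realization projectively equivalent to $V$'' rests on Corollary~\ref{COR:scaleclosed} and the definition of projective equivalence rather than on Lemma~\ref{LEM:pe} itself, which compares two matrices already known to be slack matrices; the content is the same.
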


\begin{lemma} Let $\kk$ be algebraically closed and $M$ be a matroid with realization $V$. Then the slack ideal $I_M$ is contained in the cycle ideal $C_V$. \label{LEM:cyclecontain}
\end{lemma}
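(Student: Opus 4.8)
The goal is to show $I_M \subseteq C_V$ over an algebraically closed field $\kk$. The plan is to use the Nullstellensatz via the variety side: since both ideals live in $\kk[\xx]$, it suffices to understand how their varieties compare together with a radicality argument on one side. Concretely, recall from Theorem~\ref{THM:cycleistoric} that $C_V$ is (a scaled) toric ideal, i.e.\ the kernel of a $\kk$-algebra homomorphism into a Laurent polynomial ring; such kernels are prime, hence $C_V$ is radical. Therefore, by the Nullstellensatz it is enough to prove the inclusion of varieties in the reverse direction: $\VV(C_V) \subseteq \VV(I_M)$, i.e.\ every zero of $C_V$ is a zero of $I_M$.

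First I would reduce to the torus: by Corollary~\ref{COR:varofc}, the points of $\VV(C_V) \cap (\kk^*)^t$ are exactly the slack matrices of realizations of $M$ projectively equivalent to $V$. Each such point $\ss$ satisfies $S_M(\ss) = S_{M[V']}$ for some realization $V'$, hence by Corollary~\ref{COR:rank} has rank $d+1$, so all $(d+2)$-minors of $S_M(\ss)$ vanish; thus $\ss \in \VV(I_M)$ (indeed $\ss \in \VV(I_M)\cap(\kk^*)^t$ by Theorem~\ref{THM:realizvariety}). This handles the torus part of $\VV(C_V)$. The remaining, more delicate, point is to deal with zeros of $C_V$ that lie on the coordinate hyperplanes: a priori $\VV(C_V)$ could have components inside $\{\prod x_{ij} = 0\}$ that are not seen by realizations. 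The slack ideal $I_M$, being saturated, vanishes on no such spurious locus only if those loci are actually contained in $\VV(I_M)$, which need not be obvious.

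The cleanest way around this obstacle is to argue at the level of ideals rather than varieties for the full inclusion, using that $C_V$ is the toric ideal described in Theorem~\ref{THM:cycleistoric}. A generating set of $C_V$ is given by the binomials $\xx^{c+} - \alpha_c \xx^{c-}$ over cycles $c$ of $G_M$. I would show directly that every $(d+2)$-minor of the \emph{symbolic} slack matrix $S_M(\xx)$, expanded along its entries, lies in $C_V$: substituting $x_{ij} \mapsto s_{i,j} r_i t_j$ sends a $(d+2)$-minor of $S_M(\xx)$ to the corresponding $(d+2)$-minor of $S_{M[V]}$ scaled by monomials in the $r_i, t_j$; since $\rk(S_{M[V]}) = d+1$ by Corollary~\ref{COR:rank}, that image is $0$ in $\kk[\rr,\tt,\rr^{-1},\tt^{-1}]$. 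Hence every $(d+2)$-minor of $S_M(\xx)$ lies in $\ker\phi = C_V$, so $I_{d+2}(S_M(\xx)) \subseteq C_V$. Because $C_V$ is prime (kernel of a ring map into a domain) and the defining variables $x_{ij}$ are not in $C_V$ (their images $s_{i,j}r_it_j$ are nonzero, hence nonzerodivisors, in the Laurent ring), $C_V$ is saturated with respect to $\prod x_{ij}$; therefore the saturation $I_M = I_{d+2}(S_M(\xx)) : (\prod x_{ij})^\infty$ is still contained in $C_V$.

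The main obstacle I anticipate is precisely the interplay between saturation and the coordinate-hyperplane components of $\VV(C_V)$: one must be careful that passing from $I_{d+2}(S_M(\xx))$ to its saturation does not enlarge the ideal beyond $C_V$, which is exactly why the primeness/saturatedness of $C_V$ (coming from Theorem~\ref{THM:cycleistoric}) is doing the real work. The algebraic-closure hypothesis enters only through wanting clean statements relating varieties and ideals; the ideal-theoretic argument above in fact works over any field, but stating it as in the lemma is harmless.
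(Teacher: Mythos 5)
Your argument is correct, but it is not the route the paper takes. The paper argues entirely on the variety side: from Corollary~\ref{COR:varofc} it deduces $\VV(C_V)\subseteq\VV(I_M)$, hence $\II(\VV(C_V))\supseteq\II(\VV(I_M))$, and then uses that $C_V$ is radical together with $I_M\subseteq\sqrt{I_M}$ (this is where algebraic closure enters, via the Nullstellensatz) to conclude $C_V\supseteq I_M$. Your proof instead works directly with ideals: each $(d+2)$-minor of $S_M(\xx)$ maps under $\phi$ to the corresponding minor of $D_\rr S_{M[V]}D_\tt$, which vanishes since $\rk(S_{M[V]})=d+1$, so $I_{d+2}(S_M(\xx))\subseteq\ker\phi=C_V$; and since $C_V$ is prime and contains no variable $x_{ij}$ (each $\phi(x_{ij})=s_{i,j}r_it_j$ is a unit in the Laurent ring), $C_V$ is saturated with respect to $\prod x_{ij}$, so the saturation $I_M$ stays inside $C_V$. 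This buys you two things the paper's proof does not have: the conclusion holds over an arbitrary field, and you sidestep a small gap in the paper's first step --- Corollary~\ref{COR:varofc} literally only identifies $\VV(C_V)\cap(\kk^*)^t$, so concluding $\VV(C_V)\subseteq\VV(I_M)$ requires the additional (true, but unstated) fact that the toric variety $\VV(C_V)$ is the Zariski closure of its torus part. The cost is that your proof leans on Theorem~\ref{THM:cycleistoric} (the identification $C_V=\ker\phi$) as the source of both the vanishing of the minors and the primeness/saturatedness, whereas the paper's proof only needs radicality of $C_V$ and the already-established Corollary~\ref{COR:varofc}. Your observation that the saturation step is the crux --- and that primeness of $C_V$ plus $x_{ij}\notin C_V$ is exactly what makes it harmless --- is the right one.
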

\begin{proof}
By Corollary \ref{COR:varofc} we know that $\VV(C_V) \subset \VV(I_M)$. Then, $\II(\VV(C_V)) \supset \II(\VV(I_M))$. Since $C_V$ is radical, and since  ${I_M}\subset \sqrt{I_M}$, this gives $C_V\supset I_M$.
\end{proof}

\begin{prop} Let $M$ be a matroid with projectively unique realization~$V$. Then $\VV(I_M) = \VV(C_V)$. \label{PROP:PUcyclevar} \end{prop}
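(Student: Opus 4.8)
The plan is to establish both inclusions. The inclusion $\VV(C_V)\subseteq\VV(I_M)$ is immediate: by Corollary~\ref{COR:varofc}, $\VV(C_V)\cap(\kk^*)^t$ consists of slack matrices of realizations projectively equivalent to $V$, hence lies in $\VV(I_M)$ by Theorem~\ref{THM:realizvariety}; and since $C_V$ is saturated with respect to the product of the variables (being a toric ideal by Theorem~\ref{THM:cycleistoric}, no component lies in a coordinate hyperplane), taking Zariski closures gives $\VV(C_V)\subseteq\VV(I_M)$. So the real content is the reverse inclusion $\VV(I_M)\subseteq\VV(C_V)$.

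For the reverse inclusion, I would use the hypothesis that $M$ is projectively unique. Let $\ss\in\VV(I_M)$. If $\ss\in(\kk^*)^t$, then by Theorem~\ref{THM:realizvariety} the matrix $S_M(\ss)$ is the slack matrix of some realization $U$ of $M$; since $M$ is projectively unique, $U$ is projectively equivalent to $V$, so by Corollary~\ref{COR:varofc} (applied to $C_V$) we get $\ss\in\VV(C_V)$. This shows $\VV(I_M)\cap(\kk^*)^t\subseteq\VV(C_V)$. To pass to the full variety, take closures: $\VV(I_M)=\overline{\VV(I_M)\cap(\kk^*)^t}$, because $I_M$ is saturated with respect to $\prod_{i,j}x_{ij}$, so no irreducible component of $\VV(I_M)$ is contained in the union of coordinate hyperplanes; hence every component meets the torus and $\VV(I_M)$ is the closure of its torus part. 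Therefore $\VV(I_M)=\overline{\VV(I_M)\cap(\kk^*)^t}\subseteq\overline{\VV(C_V)}=\VV(C_V)$.

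The main subtlety to get right is the closure argument on the $I_M$ side — one must invoke that $I_M$ is by construction a saturation by the product of all variables, which is precisely the condition guaranteeing $\VV(I_M)$ has no component inside a coordinate hyperplane, so that $\VV(I_M)\cap(\kk^*)^t$ is dense in $\VV(I_M)$. Equivalently, one could phrase the whole argument through Lemma~\ref{LEM:cyclecontain} (over an algebraically closed field, $I_M\subseteq C_V$, hence $\VV(C_V)\subseteq\VV(I_M)$ directly) combined with a dimension/irreducibility comparison, but the closure argument above is cleaner and field-agnostic up to the point where Theorem~\ref{THM:realizvariety} is applied. I expect no serious obstacle here; the proposition is essentially a formal consequence of Corollary~\ref{COR:varofc}, Theorem~\ref{THM:realizvariety}, and the saturation built into the definitions of $I_M$ and $C_V$.
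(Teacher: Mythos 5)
Your proof is correct and follows essentially the same route as the paper: establish $\VV(I_M)\cap(\kk^*)^t=\VV(C_V)\cap(\kk^*)^t$ via Corollary~\ref{COR:varofc} and Theorem~\ref{THM:realizvariety}, then extend to the full varieties by a closure argument. The only difference is cosmetic: the paper justifies the last step by asserting that both varieties are irreducible, whereas you justify it by noting that neither variety has a component inside a coordinate hyperplane (saturation of $I_M$, primality of $C_V$), which is if anything the more careful formulation.
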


\begin{proof} By Corollary~\ref{COR:varofc} and Theorem~\ref{THM:realizvariety}, we get $\VV(I_M)\cap(\kk^*)^t = \VV(C_V)\cap(\kk^*)^t$. Then since both varieties are irreducible, the result follows. 
\end{proof}

In fact, we can have 
$I_M = C_V$ for a realization $V$ of $M$. In this case, call $I_M$~{\em cyclic}. 

\begin{theorem} 
If the slack ideal of a matroid is cyclic then $M$ is projectively unique and $I_M$ is radical. The converse also holds when $\kk$ is algebraically closed. 
\label{THM:cyclic}
\end{theorem}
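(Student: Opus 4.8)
\textbf{Proof proposal for Theorem~\ref{THM:cyclic}.}

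The plan is to prove the two directions separately, leaning heavily on the results already established about the cycle ideal. For the forward direction, suppose $I_M = C_V$ for some realization $V$ of $M$. Since $C_V$ is a toric ideal by Theorem~\ref{THM:cycleistoric} (it is the kernel of an algebra homomorphism to a Laurent polynomial ring), it is prime, hence radical; this immediately gives that $I_M$ is radical. For projective uniqueness, I would use Corollary~\ref{COR:varofc}: the set $\VV(C_V)\cap(\kk^*)^t$ consists of exactly the slack matrices of realizations projectively equivalent to $V$. Since $I_M = C_V$, Theorem~\ref{THM:realizvariety} tells us $\VV(I_M)\cap(\kk^*)^t$ is the set of slack matrices of \emph{all} realizations of $M$. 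Equating these two sets shows every realization of $M$ is projectively equivalent to $V$, which is exactly the statement that $M$ is projectively unique.

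For the converse, assume $\kk$ is algebraically closed and $M$ is projectively unique with realization $V$. By Proposition~\ref{PROP:PUcyclevar} we have $\VV(I_M) = \VV(C_V)$. Taking vanishing ideals gives $\sqrt{I_M} = \II(\VV(I_M)) = \II(\VV(C_V)) = \sqrt{C_V} = C_V$, where the last equality uses that $C_V$ is toric, hence radical, and the Nullstellensatz applies since $\kk$ is algebraically closed. So it remains to upgrade $\sqrt{I_M} = C_V$ to $I_M = C_V$, i.e. to show $I_M$ is already radical in the projectively unique case. Here I would invoke Lemma~\ref{LEM:cyclecontain}, which gives $I_M \subseteq C_V$, together with the containment $I_M \subseteq \sqrt{I_M} = C_V$ — but this only re-proves one inclusion. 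The reverse inclusion $C_V \subseteq I_M$ is the crux: I expect to argue it by showing each binomial generator $\xx^{c+} - \alpha_c\xx^{c-}$ of $C_V$ lies in $I_M$ directly, by exhibiting it (up to a unit in the localization at the variables) as an explicit combination of $(d+2)$-minors of $S_M(\xx)$ — the same minors whose vanishing, after saturation, defines $I_M$. Concretely, a cycle in $G_M$ of length $2k$ corresponds to a $k\times k$ "generalized permutation" submatrix pattern in $S_M(\xx)$, and the relevant minor identity expresses the binomial relation among the two alternating products of matrix entries.

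The main obstacle I anticipate is precisely this last step: closing the gap between $\sqrt{I_M} = C_V$ and $I_M = C_V$ without circularity. The clean statement of the theorem ("cyclic $\iff$ projectively unique $+$ radical, over $\bar\kk$") strongly suggests the intended argument is: over $\bar\kk$, projective uniqueness gives $\sqrt{I_M} = C_V$ (as above), and then \emph{by hypothesis} $I_M$ is radical, so $I_M = \sqrt{I_M} = C_V$. Re-reading the theorem statement, the converse asserts cyclicity follows from projective uniqueness \emph{together with} $I_M$ radical — so the radicality is an assumption in the converse, not something to be derived, and the chain $I_M = \sqrt{I_M} = \II(\VV(I_M)) = \II(\VV(C_V)) = C_V$ completes it with no further work. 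Thus the real content is almost entirely bookkeeping with the earlier lemmas; the only place requiring genuine care is verifying that $C_V$ is radical (immediate from Theorem~\ref{THM:cycleistoric}, since toric ideals are prime) and that the Nullstellensatz hypotheses are in force, which is why $\kk$ algebraically closed is needed for the converse but not the forward direction.
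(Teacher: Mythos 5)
Your proposal is correct and, once you discard the detour about exhibiting the cycle binomials as explicit combinations of $(d+2)$-minors, it is exactly the paper's argument: the forward direction uses Corollary~\ref{COR:varofc} plus primality of the toric ideal from Theorem~\ref{THM:cycleistoric}, and the converse is the chain $I_M = \sqrt{I_M} = \II(\VV(I_M)) = \II(\VV(C_V)) = C_V$ via Proposition~\ref{PROP:PUcyclevar} and the Nullstellensatz. Your self-correction at the end — that radicality of $I_M$ is a hypothesis of the converse, not something to be derived — is precisely the right reading, and no further work is needed.
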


\begin{proof} Suppose that $I_M$ is cyclic. Then $M$ is projectively unique by Corollary~\ref{COR:varofc} and $I_M$ is radical, since it is prime by Theorem~\ref{THM:cycleistoric}.
Conversely, suppose that $M$ is projectively unique and $I_M$ is radical. By Proposition~\ref{PROP:PUcyclevar}, $\II(\VV(I_M)) = \II(\VV(C_V))$, so that $I_M = C_V$, as both ideals are radical. 
\end{proof}


\begin{example} Recall the matroid $M_4$ from Example \ref{EG:four_lines}, whose non-incidence graph $G_{M_4}$ is displayed in Figure \ref{FIG:fourlinesgraph}. The matroid $M_4$ has a cyclic slack ideal. Hence $I_{M_4}$ is a radical slack ideal, and $M_4$ is projectively unique.

\begin{figure}
\begin{center}
\includegraphics[height=1.5in]{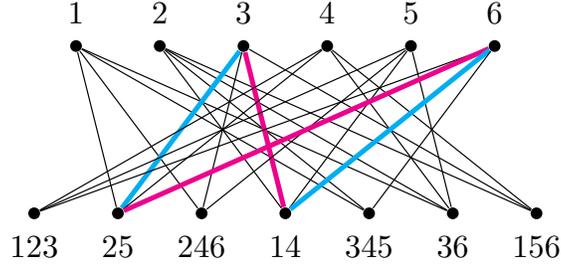}
\end{center}
\caption{The graph $G_{M_4}$ for matroid $M_4$ with the highlighted cycle corresponding to binomial $x_{3,6}x_{6,5}+x_{3,5}x_{6,6}$ of Table~\ref{TAB:72things}.}
\label{FIG:fourlinesgraph}
\end{figure}

\end{example}

\begin{example} Recall the Fano plane discussed in Example \ref{EX:fano}. Over $\mathbb{F}_2$, the 126 binomial generators of $I_{M_F}$ found in Example \ref{EX:fano} correspond to each of the cycles in the graph $G_{M_F}$. Over $\mathbb{F}_2$ this is the projectively unique representation of $M_F$, and this ideal is equal to the cycle ideal of the representation.
\end{example}



\subsection{Scaled Slack Matrices} \label{SSEC:scaledslack}

From Corollary~\ref{COR:scaleclosed} we know that quotienting by the action of $T_{n,h}$ on $\VV(I_M)\cap(\kk^*)^t$ gives us a realization space for projective equivalence classes of representations of $M$. We now give an explicit way of computing the variety of these equivalence classes. 
As in \cite[Lemma 5.5]{slack_paper}, we scale rows and columns of a slack matrix via the following lemma, to fix one representative of each projective equivalence class.

\begin{lemma} Given a realization of a matroid $M$, we may scale the rows and columns of its slack matrix $S_M$ so that it has ones in the entries indexed by the edges in a maximal spanning forest $F$ of the graph $G_M$; the resulting realization of $M$ is projectively equivalent to the original realization of $M$. \label{LEM:scaleforest}
\end{lemma}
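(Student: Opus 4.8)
The plan is to prove Lemma~\ref{LEM:scaleforest} by using the tree structure of the spanning forest to propagate scalings one edge at a time. First I would recall that scaling rows and columns of $S_M$ by nonzero scalars gives a projectively equivalent realization by Lemma~\ref{LEM:pe}, so the only content is that we can choose the row scalars $r_1,\dots,r_n$ and column scalars $t_1,\dots,t_h$ so that $r_i t_j s_{ij} = 1$ for every edge $\{i,H_j\}$ in the chosen maximal spanning forest $F$ of $G_M$.

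The key combinatorial fact is that a spanning forest has no cycles, so the system of equations $r_i t_j = s_{ij}^{-1}$ indexed by edges of $F$ is triangular and always solvable. Concretely, I would argue connected component by connected component of $G_M$ (equivalently, tree by tree of $F$): pick a root vertex in each tree, assign it scalar $1$, and then traverse the tree outward (say by breadth-first or depth-first search). Each time we move along a tree edge from an already-scaled vertex to a new vertex, the requirement $r_i t_j s_{ij} = 1$ determines the scalar of the new vertex uniquely in terms of the scalar already assigned to the old vertex and the entry $s_{ij}$ (which is nonzero precisely because the edge is present in $G_M$). Since $F$ is a forest, no vertex is ever reached twice, so there is never a conflicting constraint, and the induction goes through. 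Vertices not in any edge of $F$ (if $G_M$ is disconnected and some isolated-in-$F$ vertex exists, though for a maximal spanning forest this will not happen except possibly for isolated vertices of $G_M$ itself) can be assigned scalar $1$ harmlessly.

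Having produced these scalars, I would let $D_n = \mathrm{diag}(r_1,\dots,r_n)$ and $D_h = \mathrm{diag}(t_1,\dots,t_h)$; then $D_n S_M D_h$ has ones in all entries indexed by edges of $F$ by construction, and it is the slack matrix of a realization projectively equivalent to the original one by Lemma~\ref{LEM:pe} (or by Corollary~\ref{COR:scaleclosed}, which guarantees the scaled matrix still lies in $\VV(I_M)$ and hence is a slack matrix of an actual realization via Theorem~\ref{THM:realizvariety}). I do not anticipate a serious obstacle here: the whole argument is the standard observation that a forest admits a consistent potential function for any prescribed edge values. The only minor care needed is bookkeeping around disconnected $G_M$ and possibly isolated vertices, and making sure "maximal spanning forest" is interpreted as a spanning forest that is a maximal acyclic subgraph, so that every edge of $F$ genuinely corresponds to a nonzero slack entry.
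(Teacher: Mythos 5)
Your proof is correct and is essentially the same tree-propagation argument that the paper delegates to the cited reference (\cite[Lemma 5.5]{slack_paper}): root each tree of the forest, solve for the row/column scalars edge by edge, and note that acyclicity prevents any conflicting constraints. Your added care in invoking Corollary~\ref{COR:scaleclosed} and Theorem~\ref{THM:realizvariety} to confirm the scaled matrix is again a genuine slack matrix before applying Lemma~\ref{LEM:pe} is appropriate and fills in the bookkeeping correctly.
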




\begin{definition} Given a matroid $M$ we can take a symbolic slack matrix 
and set variables corresponding to edges in a maximal spanning forest $F$ to 1 
as in Lemma~\ref{LEM:scaleforest} to obtain a \emph{scaled symbolic slack matrix}. Then, the \emph{scaled slack ideal} is obtained by taking the $(d+2)$-minors of this matrix and saturating with respect to the product of all the variables. \label{DEFN:scaled}
\end{definition}
Using the scaled symbolic slack matrix not only allows us to study the projective realization space of $M$, but also proves to be a useful tool for computations because this matrix will have considerably fewer variables.

\begin{example} \label{EX:non-Fano} Let $M_{NF}$ be the non-Fano matroid. It is a rank 3 matroid on 7 elements depicted in Figure~\ref{FIG:nonfano_fig} with its symbolic slack matrix. It differs from the Fano plane by the inclusion of $135$ as a basis. 

 \begin{figure}[h]
\begin{center}
\begin{minipage}{0.25 \textwidth}
\includegraphics[height = 1.6 in]{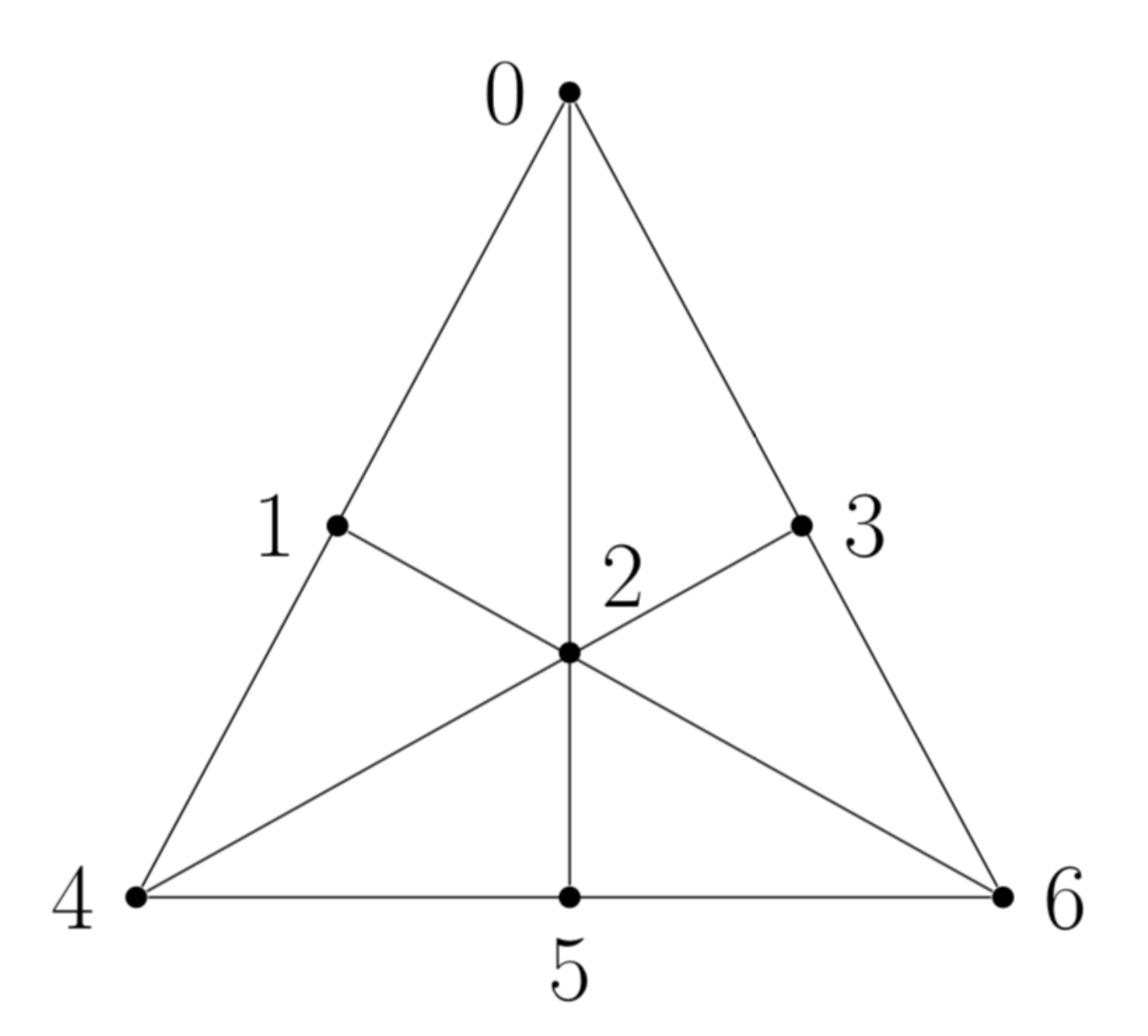}
\end{minipage}
\hspace{0.1 in}
\begin{minipage}{0.7 \textwidth}
\[
\begin{blockarray}{cccccccccc}
	&H_1	& H_2 	& H_3 	& H_4 	& H_5 	& H_6	& H_7		&H_8		&H_9 \\
	&126		&014		&456		&025		&036		&234		&35			&13			&15\\
\begin{block}{c[ccccccccc]}
0	&{x}_{01}	&0		&{x}_{03}	&0		&0		&x_{06}	&{x}_{07}		&x_{08}		&x_{09}\\
1	&0		&0		&x_{13}	&{x}_{14}	&{x}_{15}	&x_{16}	&0			&0			&0\\
2	&0		&x_{22}	&{x}_{23}	&0		&{x}_{25}	&0		&{x}_{27}		&x_{28}		&x_{29}\\
3	&{x}_{31}	&x_{32}	&{x}_{33}	&{x}_{34}&0		&0		&0			&0			&x_{39}\\
4	&x_{41}	&0		&0		&{x}_{44}	&{x}_{45}	&0		&{x}_{47}		&x_{48}		&x_{49}\\
5	&x_{51}	&x_{52}	&0		&0		&x_{55}	&x_{56}	&0			&x_{58}		&0\\
6	&0		&{x}_{62}	&0		&x_{64}	&0		&x_{66}	&x_{67}		&x_{68}		&x_{69}\\
\end{block}
\end{blockarray}
 \]
\end{minipage}

\includegraphics[width = 0.7 \textwidth]{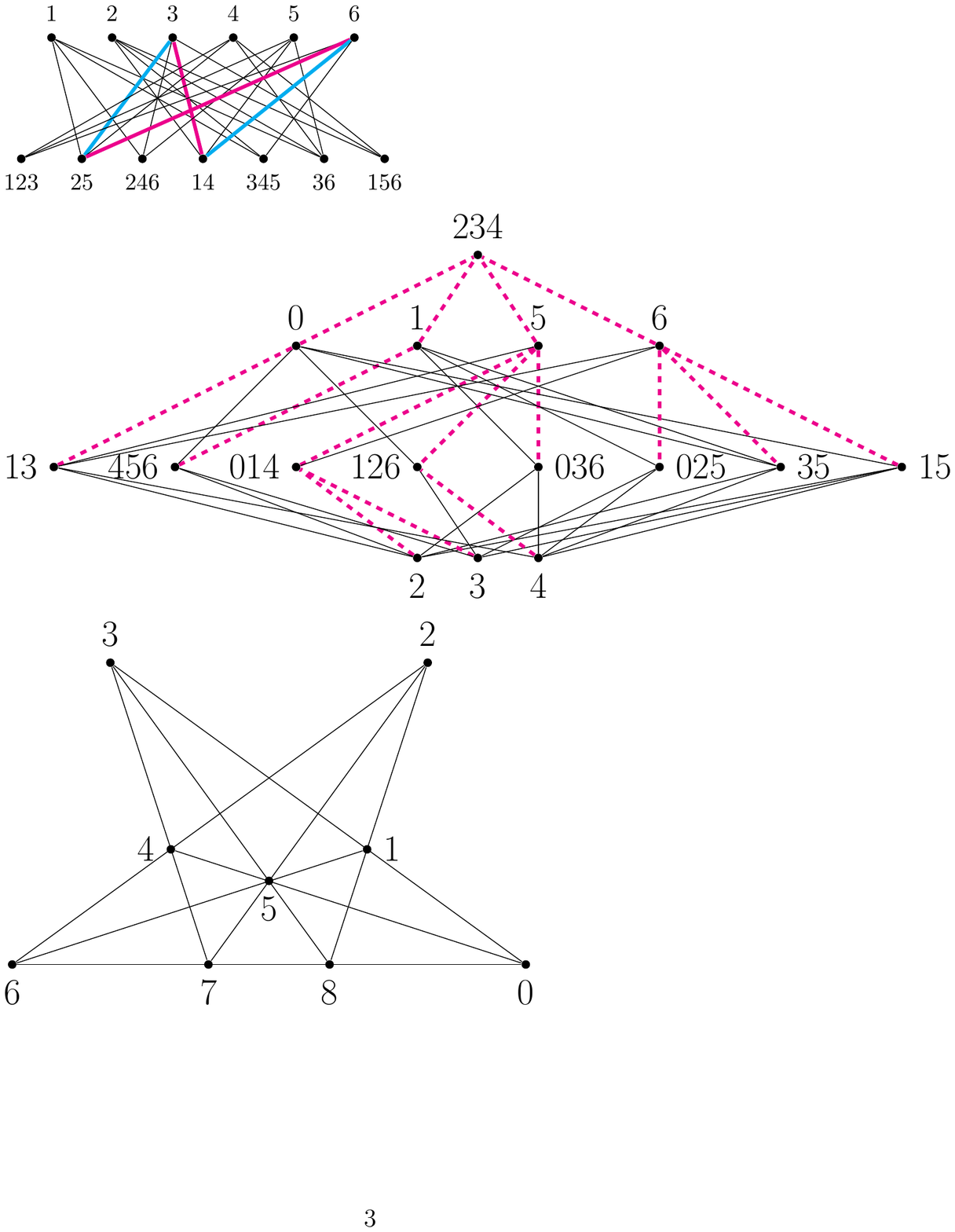}
\end{center}
\caption{The non-Fano matroid, with its non-bases depicted as lines, together with its symbolic slack matrix and the spanning tree $F$ selected of Example~\ref{EX:non-Fano}.}
\label{FIG:nonfano_fig}
\end{figure}

We now show that the non-Fano matroid is projectively unique, and write down a realization from the slack matrix. Let $F$ be the spanning tree of $G_{M_{NF}}$ depicted in Figure \ref{FIG:nonfano_fig}. We set the corresponding variables $x_{41},$ $x_{51},$ $x_{22},$ $x_{32},$ $x_{52},$ $x_{13},$ $x_{64},$ $x_{55},$ $x_{06},$ $x_{16},$ $x_{56},$ $x_{66},$ $x_{67},$ $x_{08},$ $x_{69}$ to 1 in the symbolic slack matrix in Figure \ref{FIG:nonfano_fig}.
Taking the ideal of $4$-minors and saturating, we find that the ideal consists of equations of the form $x_{i,j}-\alpha_{i,j}$, for $\alpha_{i,j}\in\QQ$, so the configuration is projectively unique over $\QQ$. The slack matrix corresponding to the single point in $\VV(I_M)\cap(\kk^*)^{39}/T_{7,9}$ is

\scriptsize
$$\bgroup\begin{pmatrix}
1&0&1&0&0&1&1&1&1\\
0&0&1&1&-1&1&2&0&0\\
0&1&-1&0&1&0&-1&-1&1\\
-1&1&-1&1&0&0&0&-2&0\\
1&0&0&-1&1&0&-1&1&1\\
1&1&0&0&1&1&0&0&2\\
0&1&0&1&0&1&1&-1&1\\
\end{pmatrix}\egroup.$$

\end{example}

\begin{example} \label{EG:perles} Consider the \emph{Perles configuration} $M_\star$ of Figure~\ref{FIG:perles}.  
It is a matroid on 9 elements with hyperplanes given by  $0 6 7 8$,  $3 4 7$,  $1 5 6$,  $1 2 8$,  $0 45$,  $3 5 8$,  $0 1 3$,  $2 4 6$,  $2 5 7$, $48$,  $1 7$,  $3 6$,  $1 4$,  $2 3$,  $0 2$. Its symbolic slack matrix is shown in Figure~\ref{FIG:perles} and
has the matrix $S(\xx)$ studied in \cite[\S4.3]{slack_paper} as a submatrix.

\begin{figure}[h]
\begin{center}
\includegraphics[height=1.7in]{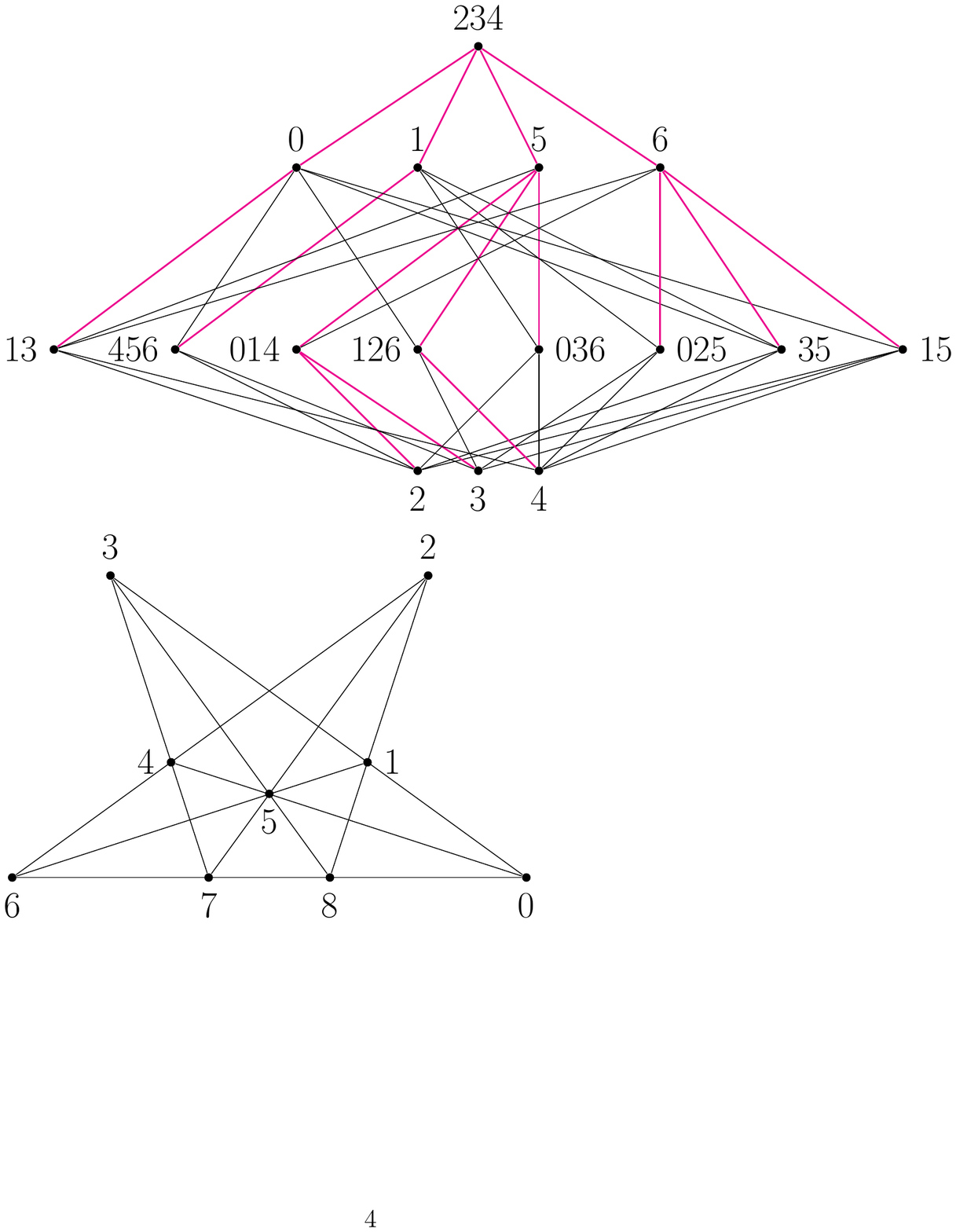}
\scriptsize
\[
\begin{blockarray}{cccccccccccccccc}
	&H_1 	& H_2 	& H_3 	& H_4 	& H_5 	& H_6 	& H_7 	&H_8	&H_9 	&H_{10}	&H_{11}	&H_{12}	&H_{13}	&H_{14}	&H_{15}	\\
	&0 6 7 8	&3 4 7	&1 5 6	&1 2 8	&0 45	&3 5 8	&0 1 3	&2 4 6	&2 5 7	&48		&1 7		&3 6		&1 4		&2 3		&0 2 \\
\begin{block}{c[ccccccccccccccc]}
0	&0		&x_{02}	& x_{03}	& x_{04}	&0		& x_{06}	&0		& x_{08}	& x_{09}	& x_{0,10}	& x_{0,11}	& x_{0,12}	& x_{0,13}	& x_{0,14}&0	\\
1	&x_{11}	& x_{12}	&0		&0		& x_{15}	& x_{16}	&0		& x_{18}	& x_{19}	& x_{1,10}	&0		& x_{1,12}	&0		& x_{1,14}& x_{1,15}\\
2	&x_{21}	& x_{22}	& x_{23}	&0		& x_{25}	& x_{26}	& x_{27}	&0		&0		& x_{2,10}	& x_{2,11}	& x_{2,12}	& x_{2,13}	&0		&0	\\
3	&x_{31}	& 0		& x_{33}	& x_{34}	& x_{35}	&0		&0		& x_{38}	& x_{39}	& x_{3,10}	& x_{3,11}	&0		& x_{3,13}	&0		& x_{3,15}\\
4	&x_{41}	&0		& x_{43}	& x_{44}	&0		& x_{46}	& x_{47}	&0		& x_{49}	&0		& x_{4,11}	& x_{4,12}	&0		& x_{4,14}& x_{4,15}\\
5	&x_{51}	& x_{52}	&0		& x_{54}	&0		&0		& x_{57}	& x_{58}	&0		& x_{5,10}	& x_{5,11}	& x_{5,12}	& x_{5,13}	& x_{5,14}& x_{5,15}\\
6	&0		& x_{62}	&0		& x_{64}	& x_{65}	& x_{66}	& x_{67}	&0		& x_{69}	& x_{6,10}& x_{6,11}	&0		& x_{6,13}	& x_{6,14}& x_{6,15}\\
7	&0		&0		& x_{73}	& x_{74}	& x_{75}	& x_{76}	& x_{77}	& x_{78}	&0		& x_{7,10}	&0		& x_{7,12}	& x_{7,13}	& x_{7,14}& x_{7,15}\\
8	&0		& x_{82}	& x_{83}	&0		& x_{85}	& 0		& x_{87}	& x_{88}	& x_{89}	&0		& x_{8,11}	& x_{8,12}	& x_{8,13}	& x_{8,14}& x_{8,15}\\
\end{block}
\end{blockarray}
 \]
\caption{The Perles configuration matroid $M_\star$ with non-bases shown as lines, and its symbolic slack matrix. }
\label{FIG:perles}
\end{center}
\end{figure}

Since the ideal of $S(\xx)$ will be contained in the ideal of the whole matrix $S_{M_\star}(\xx)$, it follows from computation in \cite{slack_paper} that $M_\star$ is not realizable over $\QQ$. However, it is realizable over $\RR$, and computing its scaled slack ideal we find  that the slack variety consists of the following matrices, where $\alpha$ is a root of the polynomial $\alpha^2-3\alpha+1$:

\tiny
\[
\begin{blockarray}{ccccccccccccccc}
\begin{block}{[ccccccccccccccc]}
0		&1		&\alpha-3	&3-\alpha	&0		&3-\alpha	&0		&-1		&1		&3-\alpha	&1		&\alpha-2	&2-\alpha	&2-\alpha	&0\\
2\alpha-5&1		&0		&0		&3-\alpha	&3-\alpha	&0		&2-\alpha	&3-\alpha	&\alpha-2	&0		&\alpha-2	&0		&2-\alpha	&5-2\alpha\\
1		&\alpha	&1		&0		&\alpha	&\alpha-1	&1		&0		&0		&\alpha	&1-\alpha	&1		&\alpha	&0		&0\\
1		&0		&\alpha-1	&1-\alpha	&\alpha-1	&0		&0		&\alpha-1	&-\alpha	&1		&1-2\alpha&0		&\alpha	&0		&-1\\
2-\alpha	&0		&2-\alpha	&\alpha-1	&0		&\alpha-2	&1		&0		&\alpha-1	&0		&\alpha	&2-\alpha	&0		&\alpha-1&\alpha-1\\
2-\alpha	&1-\alpha	&0		&1		&0		&0		&1		&\alpha-1	&0		&2-\alpha	&1		&1-\alpha	&1		&\alpha	&\alpha-1\\
0		&1		&0		&1		&1		&1		&1		&0		&1		&1		&1		&0		&1		&1		&1\\
0		&0		&3-\alpha	&\alpha-2	&1		&\alpha-2	&1		&1		&0		&\alpha-2	&0		&2-\alpha	&\alpha-1	&\alpha-1	&1\\
0		&\alpha+1	&1		&0		&1		&0		&1		&\alpha	&1-\alpha	&0		&1-\alpha	&1-\alpha	&\alpha	&\alpha&1\\
\end{block}
\end{blockarray}.
 \]
\normalsize

Over any field where $5$ has a square root, the variety has degree 2 and dimension~0, so it consists of two points obtained by setting $\alpha = \frac{3 \pm \sqrt{5}}{2}$. Over $\mathbb{F}_5$, this polynomial has a double root $\alpha=4$. In particular, it is projectively unique here and the scaled slack variety is non-reduced. 

\end{example}

%
%
%
%
%
%
%

\newpage
\subsection{Acknowledgements}

We would like to thank Bernd Sturmfels and Rekha Thomas for their guidance and helpful discussion. We also thank Dan Corey for pointing us to examples of matroids with interesting realization spaces. 

The software package \texttt{Macaulay2} \cite{M2} was invaluable in calculating all of the examples from this paper. In addition, the package ``Matroids" written for \texttt{Macaulay2} by Justin Chen was indispensable. 

We also acknowledge the Mathematical Sciences Research Institute, the Max-Planck-Institut f\"ur Mathematik in den Naturwissenschaften, and the University of Washington for facilitating our collaboration on this paper. 

\newpage
\appendix

\section{Notation}
\label{AP:notation}

\thispagestyle{empty}
\begin{table}[h]
\begin{center}
\begin{tabular}{| ll|}
\hline
Name 			& Description\\
\hline
$\alpha_H$		& a hyperplane normal to the hyperplane $H$ \\
$C_V$			& cycle ideal of realization $V$ \\
$G_M$			& non-incidence graph of $M$ \\
$I_M$			& slack ideal of $M$, see Definition \ref{DEFN:symbslack}\\
$M[V]$			& matroid given by realization $V$\\
$M_H$			& Pl\"ucker substitution matrices, see Definition~\ref{DEFN:mother} \\
$S_M$			& slack matrix of $M$, see Definition \ref{DEFN:slackmatrix}\\
$S_M(\mathbf{x})$	& symbolic slack matrix, see Definition \ref{DEFN:symbslack} \\
$t$				& number of slack variables \\
$T_{n,h}$			& torus of row and column scalings $(\kk^*)^n\times(\kk^*)^h$ \\
$U_M$			& the universal realization ideal of $M$, see Definition \ref{DEFN:mother} \\
$\mathcal{V}(I_M)$	& slack variety of $M$, see Definition \ref{DEFN:symbslack}\\
$x_{ij}$			& slack variables, see Definition \ref{DEFN:symbslack}\\
\hline
\end{tabular}
\end{center}
\end{table}

\nocite{*}
\bibliographystyle{amsalpha}
\bibliography{ref}
\end{document}